\documentclass[10pt]{amsart}
\usepackage{multicol,wrapfig,amsmath,subfigure}
\usepackage{multirow}
\usepackage{amsmath}
\usepackage{amsfonts,amssymb}
\usepackage{amsthm}
\usepackage{graphics,graphicx}
\usepackage[colorlinks,hyperindex,linkcolor=blue]{hyperref}
\hypersetup{
 pdfauthor = {A. del Pino Gomez, T. Shin},
 pdftitle= {Microflexiblity and local integrability of horizontal curves},
 pdfsubject = {Differential Geometry, tangent distributions, horizontal curves, singular curves, endpoint map}}
\usepackage{tikz-cd}
\usepackage{slashed}
\usepackage{enumitem}

\setlength\parindent{0pt}
\setlength{\oddsidemargin}{5pt}
\setlength{\evensidemargin}{5pt}
\setlength{\textwidth}{440pt}
\setlength{\topmargin}{-50pt}
\setlength{\textheight}{24cm}
\setlength{\parskip}{3mm plus0.4mm minus0.4mm}

\newtheorem{theorem}{Theorem}

\newtheorem{proposition}{Proposition}[section]
\newtheorem{lemma}[proposition]{Lemma}
\newtheorem{definition}[proposition]{Definition}
\newtheorem{corollary}[proposition]{Corollary}
\newtheorem{conjecture}[proposition]{Conjecture}

\newtheorem{remark}[proposition]{Remark}
\newtheorem{claim}[proposition]{Claim}

\newtheorem*{theorem*}{Theorem}

\newcommand{\R}{{\mathbb{R}}}
\newcommand{\C}{{\mathbb{C}}}
\newcommand{\D}{{\mathbb{D}}}
\newcommand{\Z}{{\mathbb{Z}}}
\newcommand{\NS}{{\mathbb{S}}}
\renewcommand{\P}{{\mathbb{P}}}

\newcommand{\ST}{{\mathcal{T}}}
\renewcommand{\SS}{{\mathcal{S}}}
\newcommand{\SR}{{\mathcal{R}}}
\newcommand{\OO}{{\mathcal{O}}}
\newcommand{\SD}{{\mathcal{D}}}
\newcommand{\SU}{{\mathcal{U}}}

\newcommand{\imm}{{\operatorname{imm}}}
\newcommand{\tang}{{\operatorname{tang}}}
\newcommand{\microreg}{{\operatorname{microreg}}}
\newcommand{\sing}{{\operatorname{inadm}}}

\newcommand{\Op}{{\mathcal{O}p}}
\newcommand{\Gr}{{\operatorname{Gr}}}
\newcommand{\Endpoint}{{\mathfrak{Ep}}}
\newcommand{\Diff}{{\operatorname{Diff}}}
\newcommand{\Ann}{{\operatorname{Ann}}}

\newcommand{\rank}{{\operatorname{rank}}}

\graphicspath{ {images/} }

\begin{document}

\title{Microflexiblity and local integrability of horizontal curves}

\subjclass[2010]{Primary: 58A17, 58A30. Secondary: 58A20}
\date{\today}

\keywords{tangent distributions, horizontal curves, singular curves, endpoint map, h-principle}

\author{\'Alvaro del Pino}
\address{Utrecht University, Department of Mathematics, Budapestlaan 6, 3584 Utrecht, The Netherlands}
\email{a.delpinogomez@uu.nl}

\author{Tobias Shin}
\address{Stony Brook University, Department of Mathematics}
\email{tobias.shin@stonybrook.edu}

\begin{abstract}
Let $\xi$ be an analytic bracket-generating distribution. We show that the subspace of germs that are singular (in the sense of Control Theory) has infinite codimension within the space of germs of smooth curves tangent to $\xi$. We formalise this as an asymptotic statement about finite jets of tangent curves. This solves, in the analytic setting, a conjecture of Y. Eliashberg and N.M. Mishachev regarding an earlier claim by M. Gromov about the microflexibility of the tangency condition.

From these statements it follows, by an argument due to M. Gromov, that the $h$-principle holds for maps and immersions transverse to $\xi$.
\end{abstract}

\maketitle



\section{Introduction} \label{sec:introduction}

\subsection{Context of the problem} \label{ssec:context}

Let $M$ be a smooth $m$-manifold. A \textbf{distribution} $\xi$ on $M$ of rank $k$ is a section of the Grassmann bundle of $k$-planes $\Gr(TM,k)$, i.e. a smooth choice of $k$-plane $\xi_q \subset T_qM$ at each point $q \in M$. Given such $\xi$, its space of sections $\Gamma(\xi)$ is a $C^\infty$-module of vector fields. The Lie bracket inductively defines the following sequence of modules:
\[ \Gamma(\xi)^{(1)} := \Gamma(\xi), \qquad \Gamma(\xi)^{(n+1)} = [\Gamma(\xi)^{(n)},\Gamma(\xi)^{(n)}] \]
where the rightmost expression denotes taking the $C^\infty$-span of all possible brackets. We have thus a tower which we call the (fast) \textbf{Lie flag}:
\[ \Gamma(\xi)^{(1)} \subset \Gamma(\xi)^{(2)} \subset \Gamma(\xi)^{(3)} \subset ... \subset \Gamma(\xi)^{(n)} \subset \cdots \subset \Gamma(TM). \]
The pointwise rank of these modules may depend on the point $q \in M$, so they do not arise, in general, as spaces of sections of a distribution. The case of interest for us is when there is some $n_0$ such that $\Gamma(\xi)^{(n_0)} = \Gamma(TM)$; in this case we say that the distribution $\xi$ is \textbf{bracket-generating} of step $n_0$. This condition is generic and many families of distributions, like contact or Engel, are particular examples.

This notion plays a central role in many areas of Mathematics, including Contact Topology, Geometric Control Theory, Geometry of PDEs, and Subriemannian Geometry. It is natural, in all these settings, to study submanifolds of $M$ tangent to the distribution $\xi$; such a submanifold is said to be \textbf{horizontal} or \textbf{integral}.

In this paper, we focus on horizontal curves. For us, two properties are of interest:
\begin{itemize}
\item \textbf{Local integrability}: Given a vector tangent to $\xi$, there is a smooth horizontal curve tangent to it.
\item \textbf{Microflexibility}: Given a horizontal curve $\gamma: I \to M$ and a deformation through horizontal curves $\widetilde\gamma_t: I' \to M$ over a closed subset $I' \subset I$, we can find, for small times, a global horizontal deformation $\gamma_t$ of $\gamma$ agreeing with $\widetilde\gamma_t$ over $I'$.
\end{itemize}
The first property states that, locally, there are plenty of horizontal curves. This is easy to prove: any vector field tangent to $\xi$ integrates to a family of horizontal curves. The second property asserts that horizontal curves, due to the bracket-generating condition, should behave in a flexible manner, admitting many deformations. When dealing with families, one would additionally desire for these properties to hold parametrically and relative in the parameter (this will be spelled out in detail in Subsection \ref{ssec:defIntegrabilityMicroflexibility}).

In \cite[p. 84]{Gr86}, M. Gromov states:
\begin{claim}[M. Gromov] \label{ex:Gromov}
Let $(M,\xi)$ be a manifold endowed with a bracket-generating distribution. Local integrability and microflexibility holds, parametrically and relative in the parameter, for smooth curves tangent to $\xi$.

Equivalently, in $h$-principle language: The differential relation $\SR_\tang$ describing smooth curves in $M$ tangent to $\xi$ is microflexible and locally integrable.
\end{claim}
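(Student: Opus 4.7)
My plan is to split the claim into its two halves. Local integrability is a standard ODE statement that will not use the bracket-generating hypothesis: locally trivializing $\xi$ by vector fields $X_1, \dots, X_k$, any $v = \sum u_i X_i(q) \in \xi_q$ integrates to the horizontal curve $t \mapsto \phi^{X_u}_t(q)$ with $X_u := \sum u_i X_i$, and the parametric relative-in-parameter version is obtained by smoothly choosing frames and controls in the parameter and interpolating with what is already prescribed via a partition of unity.

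For microflexibility I would localize. Let $\gamma: I \to M$ be the base horizontal curve, $I' \subset I$ closed, and $\widetilde\gamma_t$ the deformation defined on $\Op(I')$. Away from $\Op(I')$, the interval $I$ decomposes into finitely many closed subintervals $[a_j, b_j]$, and on each such subinterval I need a horizontal $\gamma_t$ that agrees with $\widetilde\gamma_t$ in germs at $a_j$ and $b_j$ and is $C^0$-close to $\gamma$ uniformly in $t$. Picking a frame $X_1, \dots, X_k$ of $\xi$ along $\gamma$, a horizontal curve on $[a_j, b_j]$ corresponds to a control $u: [a_j, b_j] \to \R^k$ via $\gamma'(s) = \sum u_i(s) X_i(\gamma(s))$; with the initial germ at $a_j$ fixed by $\widetilde\gamma_t$, matching germs at $b_j$ amounts to asking that a jet-enhanced endpoint map $\Endpoint^r$ take a prescribed value. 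If $\gamma|_{[a_j, b_j]}$ is a regular point of $\Endpoint^r$ for every sufficiently large $r$, the implicit function theorem produces $u_t$, and hence $\gamma_t$, for small $t$.

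The main obstacle is exactly that $\gamma$ may be \textbf{singular} in the sense of Control Theory on some $(a_j, b_j)$, so that $\Endpoint^r$ fails to be a submersion at the control of $\gamma|_{[a_j, b_j]}$. Here I would invoke the paper's core finding: singular germs form a subspace of infinite codimension inside germs of horizontal curves. This lets me first perturb $\gamma|_{[a_j, b_j]}$ through horizontal curves, keeping its germs at $a_j$ and $b_j$ fixed, to land on a regular curve, thus reducing to the previous case. The parametric and relative-in-parameter versions require this codimension statement to persist on compact families of parameters, which is the point of the asymptotic finite-jet formulation stressed in the abstract. Everything else is then packaging of ODE theory and the implicit function theorem applied to the (regularized) endpoint map; the genuinely hard step is the codimension estimate on singular curves.
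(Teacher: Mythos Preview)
The claim you are trying to prove is \emph{false} as stated, and the paper says so explicitly in the paragraph immediately following it: Bryant and Hsu exhibit bracket-generating distributions (Engel, Martinet) that admit \textbf{rigid} horizontal curves, i.e.\ curves $\gamma$ that admit no nontrivial horizontal deformation relative to their endpoints. For such a $\gamma$, take $I' = \Op(\{a,b\}) \cup \Op(\{c\})$ with $c$ an interior point; require $\widetilde\gamma_t$ to be constant near the endpoints and genuinely moving near $c$. No global extension $\gamma_t$ exists for any $t>0$, so microflexibility fails for $\SR_\tang$.

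Your argument breaks at exactly the expected place. In the step ``perturb $\gamma|_{[a_j,b_j]}$ through horizontal curves, keeping its germs at $a_j$ and $b_j$ fixed, to land on a regular curve'', you are assuming that the fibre of the (germ-enhanced) endpoint map through $\gamma|_{[a_j,b_j]}$ is rich enough to contain nearby regular curves. For a rigid curve that fibre is, locally, a single point, so there is nothing to perturb to. The infinite-codimension result you invoke is a statement about \emph{germs} of horizontal curves (equivalently, about jets at a single point): it says that if you are allowed to move the curve freely, you generically avoid singular jets. It does \emph{not} say that the subspace of curves with prescribed endpoint data meets the regular locus; once you clamp both ends, the available deformation space can collapse entirely. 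This is precisely why the paper's transversality statement (Theorem~\ref{thm:Thom}) carries the hypothesis that the starting family be \emph{regular}, and why the paper replaces $\SR_\tang$ by the microregular relation $\SR_\microreg$ before attempting microflexibility.
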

It turns out that this Claim is actually \textbf{false} as stated. R. Bryant and L. Hsu showed in \cite{BH} that there are examples of bracket-generating distributions (the simplest ones being Engel and Martinet distributions) that possess \textbf{rigid} horizontal curves, i.e. curves which cannot be deformed relative to their endpoints. Given such a rigid curve $\gamma$, we can choose a deformation $\widetilde\gamma_t$ whose domain is a neighbourhood of the endpoints (where we require it to be fixed) and a subinterval in the interior (where we require it to be non-trivial). As such, it does not admit an extension to a global deformation, contradicting microflexibility. Understanding rigidity has been a central problem in Subriemannian Geometry since then \cite{AS,ZZ,Z}.

Y. Eliashberg and N.M. Mishachev conjectured in \cite[p. 138]{EM} that Gromov's Claim \ref{ex:Gromov} should hold if we restrict to a suitable subfamily of horizontal curves. In this paper we define such a subfamily, state a modification of Claim \ref{ex:Gromov}, and prove it when $\xi$ is analytic.

\subsection{Adjusting the claim} \label{ssec:adjust}

We need some preliminary notation and definitions: Let $(M,\xi)$ be a smooth manifold equipped with a distribution. The space of smooth horizontal maps $\gamma: [a,b] \to (M,\xi)$, endowed with the $C^\infty$-topology, is denoted by $C^\infty([a,b],M,\xi)$. The subspace of maps with initial point $\gamma(a) = q \in M$ is denoted as $C^\infty_{a,q}([a,b],M,\xi)$. Both of them are Fr\'echet manifolds.

One may then ask whether the space of horizontal maps with both ends fixed is a manifold as well. These spaces can be described as the fibres of the smooth map:
\begin{definition} \label{def:endpoint}
The \textbf{endpoint map} is:
\begin{align*}
\Endpoint: C^\infty_{a,q}([a,b],M,\xi) \quad& \longrightarrow\quad (M,\xi) \\
\Endpoint(\gamma) \quad& := \quad \gamma(b).
\end{align*}
\end{definition}

Allowing us to define:
\begin{definition}
A curve $\gamma \in C_{a,q}^\infty([a,b],M,\xi)$ is \textbf{regular} if the endpoint map $\Endpoint$ is a submersion at $\gamma$. Otherwise, $\gamma$ is said to be \textbf{singular}.
\end{definition}
That is, regularity of $\gamma$ implies that the subspace of horizontal maps with endpoints $\gamma(a)$ and $\gamma(b)$ is a manifold at $\gamma$, cut out by $\Endpoint$. It follows that any small perturbation of the endpoint $\gamma(b)$ can be followed by a perturbation of $\gamma$ itself. This is an instance of microflexibility for regular curves: a deformation defined close to the endpoints can be extended to a global one. One may show that a rigid curve is necessarily singular \cite{Bryant}.

Despite this flexibility, regular curves are not the subclass of horizontal curves that we want to work with: Whereas regularity is a global property (i.e. it may be the case that $\gamma$ is regular but some subinterval is not) microflexibility is local (we must be able to extend any deformation, given over any closed subset). As such, we should focus on curves defined by a local condition.

The obvious candidate would be the class of curves all whose subintervals are regular. Despite natural, such a class is hard to describe (because it is defined at the level of germs). Instead, we work with \textbf{microregular curves} (Section \ref{sec:localIntegrability}, particularly Definitions \ref{def:singularJets} and \ref{def:microregularJets}). The rough idea is that a curve is microregular if its $r$-jets, for $r$ large enough, cannot possibly be $r$-jets of singular curves. In particular, microregular curves are regular over any subinterval. The advantage of microregularity is that it can be checked at the level of jets. Most of the paper has to do with detecting microregular jets.

We conjecture:
\begin{conjecture} \label{conj:main}
Let $(M,\xi)$ be bracket-generating. Then, local integrability and microflexibility hold, parametrically and relative in the parameter, for microregular curves tangent to $\xi$.
\end{conjecture}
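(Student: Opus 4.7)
The plan is to reduce Conjecture \ref{conj:main} to two inputs: the infinite-codimension statement for singular germs (the main technical result advertised in the abstract) and the classical implicit function theorem applied to the endpoint map $\Endpoint$. The role of microregularity is precisely to convert the global, hard-to-control notion of regularity into a local, jet-theoretic condition that is stable under restriction and under small $C^\infty$-perturbations.

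For local integrability I would proceed jet-by-jet. By definition, a microregular $r$-jet lies in the complement of $\JSing$; since this complement is open and (in the analytic setting) of infinite codimension, any microregular jet extends to a horizontal germ, which in turn extends to a global microregular curve by bracket-generation and a Chow-type construction. The parametric version would be proved by induction on the skeleta of the parameter polyhedron: on each cell, the set of jets one must avoid sits inside the singular locus, so infinite codimension lets the extension be carried through without obstruction. Relativeness follows by using a collar of the already-defined closed subfamily and interpolating the freshly constructed extension to it via a smooth cut-off in the parameter, staying within the microregular open set.

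Microflexibility is the harder half. The crucial observation is that microregularity is inherited by restriction, so a microregular curve $\gamma$ is in particular regular on every subinterval $J \subseteq I$: at $\gamma|_J$ the map $\Endpoint : C^\infty_{a,q}(J, M, \xi) \to M$ is a submersion. Given a deformation $\widetilde{\gamma}_t$ over a closed subset $I' \subseteq I$, I would cover $I \setminus I'$ by short intervals $J_i$, and on each $J_i$ apply the implicit function theorem to obtain a smooth local right inverse of $\Endpoint$ near $\gamma|_{J_i}$. Feeding in the boundary values prescribed by $\widetilde{\gamma}_t$ on $\partial J_i$ produces a deformation of $\gamma|_{J_i}$ that agrees with $\widetilde{\gamma}_t$ where they overlap. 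Patching the $J_i$ into a global deformation can be done inductively from left to right, since at each step we only need to match a single new endpoint, which the submersion property guarantees for small time.

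The main obstacle will be the combined parametric and relative version of the patching step. For a family $\gamma_s$, the intervals $J_i$ on which each $\gamma_s$ is regular depend on $s$; a uniform choice requires openness of microregularity in the $C^\infty$-topology, itself a consequence of the infinite-codimension theorem. One must also verify that the deformations produced by the right inverses remain microregular throughout the family and for the duration of the deformation, rather than merely horizontal; this I would ensure by shrinking the time interval of the deformation (permitted by the definition of microflexibility) so that the perturbed curves stay inside a fixed $C^\infty$-neighbourhood of $\gamma_s$ on which microregularity persists. Making all of these choices continuously and compatibly over the parameter space, relative to the closed subspace where the deformation is already prescribed, is the essential bookkeeping burden of the proof and is where the infinite-codimension input is used decisively.
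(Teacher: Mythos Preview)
Your proposal is correct and follows essentially the same strategy as the paper: both reduce Conjecture~\ref{conj:main} to the infinite-codimension input (Conjecture~\ref{conj:main2}/Theorem~\ref{thm:main2}), use it for local integrability by extending first jets to microregular $r$-jets and then to horizontal germs via an Ehresmann lift, and handle microflexibility by exploiting that microregularity forces regularity on every subinterval, so the endpoint map can be locally inverted to match prescribed boundary data. (Your mention of a ``Chow-type construction'' to get a \emph{global} microregular curve is superfluous; local integrability only concerns germs.)

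The one substantive divergence is in the last step of microflexibility. You propose to keep the constructed deformation microregular by shrinking the time interval $\delta$ and invoking openness of the microregular condition in $C^\infty$; this is valid, since at $s=0$ the extension coincides with the given microregular family and the endpoint-matching construction depends $C^\infty$-continuously on $s$, so compactness of $K\times[0,1]$ yields a uniform $\delta$. The paper takes a different route: it observes that the endpoint-matching produces a family that is a priori only \emph{regular}, and then invokes a separate Thom-transversality result (Theorem~\ref{thm:Thom}, proved in Section~\ref{sec:Thom}) to $C^a$-perturb this regular family into a microregular one, relative to the region where microregularity already holds. Both routes succeed; the paper's is more laborious but yields Theorem~\ref{thm:Thom} as an independent genericity statement of separate interest.
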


One would like to claim, additionally, that the family of microregular curves is large (i.e. that only ``a few'' horizontal germs/jets are being discarded). A first result in this direction was loosely stated by L. Hsu in \cite[Corollary 7]{Hsu}, saying that regular curves are generic among horizontal curves of bracket-generating distributions. We also conjecture:
\begin{conjecture}\label{conj:main2}
Let $(M,\xi)$ be bracket-generating. Then:
\begin{itemize}
\item Germs of microregular curves are open and dense among horizontal germs.
\item Their complement has infinite codimension and contains all singular germs.
\end{itemize}
\end{conjecture}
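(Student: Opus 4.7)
The plan is to reduce the conjecture to a subanalytic codimension estimate in jet space. For each $r$, let $\JSing$ denote the image in $r$-jet space at $p$ of singular horizontal curves. By construction, a horizontal germ is microregular iff some $r$-jet escapes $\JSing$, so the complement of microregular germs is exactly the set of germs whose $r$-jet lies in $\JSing$ for every $r$. Using the Hamiltonian characterization of singular curves — the existence of a non-zero lift to $\Ann(\xi)$ whose tangent vectors lie in $\ker(\DLambda|_{\Ann(\xi)})$ — and analyticity of $\xi$, one shows by projecting out the lift that $\JSing$ is a closed subanalytic subset of jet space. The entire conjecture then reduces to proving that the codimension of $\JSing$ in the $r$-jet space of horizontal curves tends to infinity with $r$.

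The hard part is this codimension estimate, and it is where I expect the principal obstacle to lie. The approach I would take is to stratify $\JSing$ by the corank of the lift and by the depth of the iterated Goh-type conditions that a singular curve satisfies. On each stratum, successive differentiation of the Hamiltonian constraint along $\gamma$, combined with the bracket-generating hypothesis applied through the Lie flag $\Gamma(\xi)^{(n)}$, should produce independent algebraic relations on the higher jet coordinates of $\gamma$. Heuristically, the step of the Lie flag at which $\Gamma(\xi)^{(n)}$ first attains full rank governs how many differentiations are needed to produce non-trivial constraints in each direction of $T_pM/\xi_p$; pushing further generates further independent constraints. The delicate bookkeeping — counting independent relations stratum by stratum, then gluing the strata into a global codimension bound via subanalytic stratification tools such as Gabrielov's theorem — is where the real work sits.

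Granted the codimension bound, the three assertions follow quickly. Openness of microregular germs is immediate from $\JSing$ being closed in $r$-jet space. Density, together with infinite codimension of the complement of microregular germs, follow because subanalytic subsets of positive codimension are nowhere dense, and the complement injects into $\JSing$ at every order $r$. That singular germs lie in the complement is tautological, since every $r$-jet of a singular curve belongs to $\JSing$ by definition. Parametric and relative versions, if one wants them in the same breath, come from applying the same analysis inside jet spaces of families and invoking Thom-style transversality for families of subanalytic subvarieties of bounded complexity.
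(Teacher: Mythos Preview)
Your outline matches the paper's strategy in its broad strokes: both use Hsu's cotangent characterization, both pass through subanalytic geometry, and both reduce everything to a codimension estimate in jet space that grows with $r$. Note that both you and the paper only treat the analytic case; the statement as phrased is the general conjecture, but what is actually established is Theorem~\ref{thm:main2}.

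Where your sketch and the paper diverge is in how the codimension bound is obtained, and this is where your proposal has a genuine gap. You suggest projecting the full set of characteristic $r$-jets in $J^r(Z_1)$ down to $J^r(M,\xi)$ and then stratifying the image by corank and Goh depth, counting constraints produced by differentiating the Hamiltonian relation. The paper explicitly says it does \emph{not} know how to control the projection of all characteristic $r$-jets. Instead it stratifies the annihilator $Z_1$ itself (not jet space) by the dual Lie flag $Z_1\supset Z_2\supset\cdots$ and by the rank of $\ker(d\lambda|_{Z_n})\cap d\pi^{-1}(\xi)$ restricted to each stratum $S$. On every stratum away from the zero section this distribution $\Xi_S$ has rank strictly less than $\rank(\xi)$ (Lemma~\ref{lem:dimensionCounting}, using the Jacobi identity and bracket-generation), so the \emph{tangency-type} characteristic jets---those tangent to a fixed stratum---have dimension at most $(\rank(\xi)-1)(r-1)+2\dim M$ by an Ehresmann-lifting parametrization. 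The codimension bound is thus a one-line geometric rank inequality, not a bookkeeping of differentiated constraints.

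The other ingredient you are missing is the closure step (Proposition~\ref{prop:realisableJets}): a singular curve need not be tangent to any single stratum, so its $r$-jet at $0$ need not be of tangency type. But at a sequence of times approaching $0$ the curve (and its cotangent lift) does lie in a single stratum, so the $r$-jet at $0$ lies in the \emph{closure} of the tangency-type locus. The paper therefore defines the inadmissible set as this closure (together with jets tangent to lower-dimensional strata of $M$ in the non-regular case), which is closed subanalytic of the same dimension, and only then deduces that all singular germs are caught. Your claim that projecting out the lift directly yields a closed subanalytic set containing all singular $r$-jets with the right codimension skips both this closure argument and the distinction between arbitrary characteristic jets and those tangent to strata.
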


\subsection{Statement of the main results} \label{ssec:statement}

In the present paper, we prove Conjectures \ref{conj:main} and \ref{conj:main2} when $\xi$ is analytic. Do note that the statements refer to smooth curves nonetheless:
\begin{theorem}\label{thm:main}
Let $(M,\xi)$ be bracket-generating and real analytic. Then, local integrability and microflexibility hold, parametrically and relative in the parameter, for microregular curves tangent to $\xi$.
\end{theorem}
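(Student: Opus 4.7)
The plan is to deduce Theorem \ref{thm:main} from two ingredients. The first is the infinite-codimension estimate for singular horizontal jets announced in the abstract, which by definition of microregularity (Definitions \ref{def:singularJets} and \ref{def:microregularJets}) ensures that microregular jets form an open dense subset of horizontal jet space, with stability under families. The second is the classical fact that at a \emph{regular} horizontal curve the endpoint map $\Endpoint$ is a tame Fr\'echet submersion, so the implicit function theorem in the Nash--Moser sense applies.

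\textbf{Local integrability.} Given a horizontal vector $v \in \xi_q$, extend it to a local section of $\xi$ and flow to obtain a short horizontal curve $\gamma_0$ with $\dot\gamma_0(0) = v$. The curve $\gamma_0$ need not be microregular, but by the codimension estimate the set of microregular $r$-jets with prescribed $1$-jet is dense in the space of all horizontal $r$-jets for $r$ large enough. A holonomic-approximation-style perturbation, keeping the initial $1$-jet fixed, yields a microregular curve with the prescribed initial vector. Parametric and relative versions follow by running the perturbation over a compact family and using that the codimension estimate, and hence density of microregular jets, persists in families.

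\textbf{Microflexibility.} Let $\gamma : I \to M$ be microregular and let $\widetilde\gamma_t : I' \to M$ be a horizontal deformation over a closed subset $I' \subset I$. After slightly enlarging $I'$ and cutting $I \setminus I'$ into short sub-arcs $J = [a', b']$, it suffices to extend $\widetilde\gamma_t$ across each $J$ while matching high-order jets at the two endpoints. By microregularity, the restriction $\gamma|_J$ is regular, so for each fixed $t$ the endpoint map
\[ \Endpoint : C_{a', \widetilde\gamma_t(a')}^\infty(J, M, \xi) \longrightarrow M \]
is a submersion at $\gamma|_J$. The tame implicit function theorem then produces, for $t$ sufficiently small, a smooth family of horizontal arcs realising the prescribed endpoints; a parallel argument applied to the jet-endpoint map handles matching of $r$-jets, after which one concatenates with $\widetilde\gamma_t$ on $I'$. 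The parametric and relative refinements come for free, since microregularity is open and stable in parameters and the implicit function theorem depends continuously on its data.

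\textbf{Main obstacle.} Both reductions above are more-or-less standard once one is allowed to work with microregular, rather than arbitrary horizontal, curves. The real difficulty, and the reason the paper needs the analytic hypothesis, is the codimension input itself: singular curves cannot be recognised from a single jet of fixed order, and one must track the interplay between the fast Lie flag $\Gamma(\xi)^{(n)}$ and its annihilator filtration across successive jet levels. The analytic structure is essential here to stratify the singular locus and rule out the Engel-/Martinet-type pathologies, such as Bryant--Hsu rigid curves, that forced Claim \ref{ex:Gromov} to be revised in the first place.
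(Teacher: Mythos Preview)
Your overall architecture is right: local integrability is driven by the codimension estimate for inadmissible jets, and microflexibility is driven by regularity of subarcs of a microregular curve. But there is a genuine gap in your microflexibility argument, and your toolkit differs from the paper's in a way that matters.

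\textbf{The gluing gap.} You propose to extend across each subarc $J=[a',b']$ by hitting the prescribed endpoint via an implicit-function-theorem argument, and then to upgrade to ``matching of $r$-jets'' by applying the same reasoning to a jet-endpoint map. But matching $r$-jets at $a'$ and $b'$, for any fixed $r$, only yields a $C^r$ concatenation with $\widetilde\gamma_t$, not a $C^\infty$ one; and matching \emph{all} jets is an infinite-codimension condition that no implicit function theorem will deliver. The paper avoids this entirely: working in an Ehresmann chart $\pi:U\to\R^l$, it modifies the \emph{projection} of the curve so that it literally agrees with $\pi\circ\widetilde\gamma_{k,s}$ on a neighbourhood of each endpoint, and then lifts from the midpoint. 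Since the lift solves the same ODE as $\widetilde\gamma_{k,s}$ near the ends, agreement of the endpoint \emph{value} forces agreement of the entire germ. The endpoint values are then adjusted using a finite-dimensional \emph{variational endpoint family} (Lemma \ref{lem:variations}, Corollary \ref{cor:variationsParametric}): one produces, once and for all, an $(m-l)$-parameter family of horizontal curves whose reduced endpoints sweep out a small vertical ball, so that the needed correction is read off from an honest finite-dimensional diffeomorphism. No Nash--Moser is invoked; your appeal to a ``tame Fr\'echet submersion'' is both unverified and unnecessary.

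\textbf{Microregularity of the extension.} You assert that microregularity of the extended family ``comes for free'' from openness. The paper is more careful: the family produced above is only known to be \emph{regular}, and it then invokes the Thom-type transversality statement (Theorem \ref{thm:Thom}/Proposition \ref{prop:Thom}) to perturb, relative to the already microregular region, into $J^r(M,\xi)_\microreg$. Your openness argument is plausible for sufficiently small $s$, but you should at least note that this is where the work is, and that the paper packages it as a separate transversality theorem which in turn relies on the same finite-dimensional variational families to close up the endpoints after perturbing.

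\textbf{Local integrability.} Your sketch is in the right spirit but vague: ``holonomic-approximation-style perturbation'' is not what is happening. The paper's argument is cleaner and entirely at the jet level: use Theorem \ref{thm:main2} to extend the given $1$-jets $(v_k)$ to microregular $r$-jets $(\nu_k)$ (the codimension bound makes the avoidance possible even with the $1$-jet fixed), and then realise each $\nu_k$ by an honest horizontal germ via the Ehresmann lifting map for jets (Lemma \ref{lem:EhresmannLiftJets}). Openness of $J^r(M,\xi)_\microreg$ then makes the whole short curve microregular.
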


Furthermore:
\begin{theorem}\label{thm:main2}
Let $(M,\xi)$ be bracket-generating and real analytic. Then:
\begin{itemize}
\item Germs of microregular curves are open and dense among horizontal germs.
\item Their complement has infinite codimension and contains all singular germs.
\end{itemize}
\end{theorem}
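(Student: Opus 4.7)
The plan is to show that at each jet order $r$, the set $\JSing$ of $r$-jets realisable by singular horizontal curves is a proper closed semi-analytic subset of the space of horizontal $r$-jets, and that its codimension tends to infinity as $r \to \infty$. Fix analytic coordinates near a point $p \in M$ and an analytic local frame $X_1, \dots, X_k$ for $\xi$, so that horizontal curves correspond to controls $u(t)$ through $\dot\gamma = \sum u_i X_i(\gamma)$. By Pontryagin's Maximum Principle, $\gamma$ is singular if and only if there exists a nowhere-vanishing abnormal lift $\lambda \colon I \to \Ann(\xi)$ obeying the abnormal Hamiltonian ODE. Projectivising $\lambda$ inside $\Ann(\xi)$ produces compact fibres, and truncating the abnormal equations at jet order $r$ cuts out an analytic subvariety of the joint jet space of pairs $(j^r\gamma, j^r[\lambda])$. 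The projection to $j^r\gamma$ is then proper, so by Remmert's proper mapping theorem (in its real-analytic, subanalytic version) its image $\JSing$ is a closed semi-analytic subset.

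Granted this, everything except the infinite codimension statement follows quickly. Microregular $r$-jets are by definition the open complement of $\JSing$, and every horizontal $r$-jet can be realised by a smooth horizontal curve by integrating an appropriate polynomial control, which yields openness of the microregular germs; density follows once $\JSing$ is known to be proper, by perturbing off it in jet space. The containment of singular germs in the non-microregular set is immediate from the definition, since the $r$-jet of a singular curve manifestly is the $r$-jet of a singular curve.

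The main obstacle is the quantitative codimension growth. The idea is that each additional jet order of the abnormal condition forces $\lambda$ to annihilate not only $\xi$ but, by differentiating the Pontryagin identity along $\gamma$, also the prolonged modules $\Gamma(\xi)^{(j)}$ up to some $j$ increasing with $r$. By bracket-generating, $\Gamma(\xi)^{(n_0)} = \Gamma(TM)$, so sufficiently many prolongations force $\lambda = 0$ unless the jet of $\gamma$ satisfies a growing number of analytic equations cutting down the fibre. To formalise this uniformly in $r$, I would combine: (a) an analytic Noetherian and Artin approximation argument, showing that if the codimension of $\JSing$ were bounded in $r$, then formal abnormal lifts at every order would integrate to genuine singular analytic germs and the problem would reduce to analysing germs of singular curves directly; and (b) a quantitative count of how many new independent conditions the next prolongation step imposes, using the strict rank jumps in the Lie flag to produce linearly independent covectors to annihilate. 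The delicate point is making this count uniform across strata where the growth vector of $\xi$ changes; this is precisely where analyticity is used, since the Lie flag admits an analytic stratification of $M$ on each piece of which the count is constant, yielding a uniform lower bound on $\operatorname{codim} \JSing$ in terms of $r$.
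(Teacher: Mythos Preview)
Your overall framework matches the paper's: lift singular curves to abnormal/characteristic curves in $\Ann(\xi)$, work with jets there, project down using compactness of the projectivised fibres, and exploit analyticity for subanalytic structure. The containment and openness claims are indeed routine once the codimension bound is in hand. One small correction: the projected set is subanalytic, not semianalytic, and this is what the paper obtains too.

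The genuine gap is in your codimension argument, which is the heart of the theorem. Your proposed mechanism --- that differentiating the Pontryagin identity forces $\lambda$ to annihilate successively higher $\Gamma(\xi)^{(j)}$ --- is not correct as stated. Differentiating $\langle\lambda, X_i(\gamma)\rangle = 0$ along $\gamma$ yields conditions of the form $\langle\lambda, [\dot\gamma, X_i]\rangle = 0$, which constrain $\lambda$ against brackets \emph{in the direction of $\gamma'$ only}, not against all of $\xi^{(2)}$. Annihilation of $\xi^{(2)}$ is the Goh condition, a higher-order necessary condition for certain abnormals, not an automatic consequence of singularity. So your count of ``new independent conditions per prolongation step'' does not follow from this mechanism, and the Artin-approximation reduction you sketch in (a) leads nowhere without an independent quantitative input: even if bounded codimension produced genuine analytic singular germs, you would still need to show those are scarce, which is the original problem.

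What the paper does instead is stratify $Z_1 = \Ann(\xi)$ (not just $M$) into smooth semianalytic pieces $S$ on which the singular distribution $\Xi_S = TS \cap \ker(d\lambda|_{Z_n}) \cap d\pi^{-1}(\xi)$ has \emph{constant rank}; the key lemma is that bracket-generation forces $\rank(\Xi_S) < \rank(\xi)$ on every stratum off the zero section. Jets of characteristic curves tangent to a stratum then have dimension at most $(\rank(\xi)-1)r + O(1)$, versus $\rank(\xi)\, r + O(1)$ for all horizontal $r$-jets, giving linear codimension growth directly. A separate limiting argument (local finiteness of the stratification) shows that every actual singular curve has $r$-jet in the closure of these tangency-type jets. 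Your stratification of $M$ by growth vector appears in the paper as well, but only to handle the non-regular locus; the essential dimension count takes place upstairs in $Z_1$.
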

Theorem \ref{thm:main2} is, in fact, a key ingredient in the proof of Theorem \ref{thm:main}. 

The main idea behind both claims is as follows: Using tools from the theory of semianalytic and subanalytic sets we prove that $T^*M$ admits a stratification that is adapted to $\xi$ (in a precise sense described in Section \ref{sec:stratification}). This provides us with control on the cotangent lifts of singular curves and thus on the singular curves themselves and their jets/germs. 

We explain our precise setup in Section \ref{sec:overview}, including an outline of the proof in Subsection \ref{ssec:overview}. In the remainder of this Section, we provide several useful rephrasings of Theorem \ref{thm:main2}, as well some corollaries about the classification of maps \emph{transverse} to $\xi$.

\begin{remark}
The cotangent viewpoint we use to study singular curves is well-known in the Subriemannian Geometry literature, and stratification ideas have been used before to approach the \emph{Sard conjecture for the endpoint map} \cite{BFPR,BV,LLMV,LMOPV}.
\end{remark}

\begin{remark}\label{rem:genericCase}
Our approach may also apply to prove the same results for $\xi$ smooth and generic (i.e. lying in an open dense subset of the space of distributions defined by transversality with respect to a stratification in jet space). We leave this as an open question, but we point out that a related analysis of jets was carried out in \cite{CJT} to prove that a generic smooth $\xi$ with $\rank(\xi) \geq 3$ admits no abnormal minimisers.

A more difficult problem, which was suggested to us by D. Sullivan, reads: Can these methods also be applied to more general smooth distributions (for instance, generic with prescribed growth vector)?
\end{remark}

\subsection{Transverse maps} \label{ssec:transverse}

In \cite[p. 84]{Gr86}, Gromov uses Claim \ref{ex:Gromov} to argue that sheaves of maps transverse to bracket-generating distributions are flexible and thus satisfy the complete $h$-principle. This is still an open question, but some instances are known: The contact case has been proven in \cite[Section 14.2]{EM} and the Engel case in \cite{PP}. The present article settles the question for $\xi$ analytic:

\begin{theorem} \label{thm:transverse}
Let $(M, \xi)$ be an analytic, bracket-generating distribution. Let $V$ be a smooth manifold. Then both
\begin{itemize}
\item maps $f: V \to M$ with $df : TV \to TM \to TM/\xi$ surjective,
\item and immersions transverse to $\xi$,
\end{itemize}
satisfy an h–principle that is $C^0$--close, parametric, and relative (both in the parameter and the domain).
\end{theorem}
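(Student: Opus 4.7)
The plan is to follow Gromov's original argument from \cite[p.~84]{Gr86}, which was conditional on Claim \ref{ex:Gromov}; in the analytic setting we can now substitute Theorems \ref{thm:main} and \ref{thm:main2} as the missing input.

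Both classes of maps in the statement cut out open, $\Diff(V)$-invariant subsheaves $\SR_1, \SR_2$ of the sheaf $C^\infty(V,M)$, since transversality of $df$ to $\xi$ is an open condition and so is being an immersion. By Gromov's sheaf-theoretic $h$-principle (see \cite[Section 2.2]{Gr86} and \cite[Chapter 13]{EM}), to establish the $C^0$-close, parametric, relative $h$-principle for such sheaves it suffices to prove parametric, relative local integrability and microflexibility. Local integrability reduces to a pointwise construction in coordinates adapted to $\xi$: any formal transverse $1$-jet is realised by a linear map in such coordinates, and, in the immersion case, small perturbation preserves injectivity.

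The substance of the argument lies in microflexibility. Let $f: V \to M$ be transverse to $\xi$ and let $f_t$ be a partial deformation defined on $\Op(V')$ for some closed $V' \subset V$, depending on external parameters. I would foliate a neighbourhood of $V \setminus V'$ by short arcs tangent to $df^{-1}(\xi) \subset TV$, so that $f$ maps each leaf to a horizontal curve in $(M,\xi)$. A deformation of $f$ then restricts on each leaf to a deformation of a horizontal curve, with boundary data prescribed by the values of $f_t$ near the endpoints of the leaf. Theorem \ref{thm:main2} ensures that the foliation can be chosen so that all the resulting horizontal curves are microregular, uniformly in the parameter, and Theorem \ref{thm:main} then provides, leaf by leaf, an extension of the leafwise deformation for short time, parametrically and relative in the parameter.

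The main obstacle I anticipate is the assembly step: organising the leafwise extensions into a single smooth deformation of $f$ that is compatible over overlapping foliation charts and depends smoothly on the external parameters. This is where the relative and parametric nature of Theorem \ref{thm:main} becomes essential, since one can fix the deformation near the boundary of each leaf and build up the global extension by an inductive cover-by-cover argument. Once microflexibility is established in this form, Gromov's sheaf-theoretic theorem delivers the conclusion, following the pattern of the contact case \cite[Section 14.2]{EM} and the Engel case \cite{PP}.
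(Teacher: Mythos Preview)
Your reduction is where the argument breaks. You invoke Gromov's sheaf-theoretic theorem as if microflexibility and local integrability of a $\Diff(V)$-invariant relation already yielded the full $h$-principle on $V$. They do not: Gromov's theorem (\cite[Section 2.2]{Gr86}, \cite[Theorem 13.5.3]{EM}) only produces solutions \emph{near a submanifold of positive codimension} in the source. Since the transversality relations $\SR_1,\SR_2$ are open, they are automatically microflexible and locally integrable, yet the $h$-principle for them on a closed $V$ is \emph{not} a formal consequence; this is exactly why the bracket-generating hypothesis is needed. Your foliation-by-arcs argument is therefore proving something that was already free, while leaving the actual obstruction untouched.

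The paper's route is different and this difference is the point. One thickens $V$ to $V \times \R$ and considers on $V \times \R$ the relation ``horizontal immersion in the $\R$-direction and transverse in the $V$-directions''. This relation is no longer open, because horizontality is a closed condition; its microflexibility and local integrability are supplied by Corollary~\ref{cor:immersions} (which is where Theorems~\ref{thm:main} and~\ref{thm:main2} enter, with $\SR = \SR_\imm$). Now $V \times \{0\}$ has positive codimension in $V \times \R$, so Gromov's theorem legitimately applies and gives a holonomic solution near $V \times \{0\}$; restricting to the slice produces the desired transverse map. The horizontal curves live in the \emph{extra} $\R$-factor, not inside $V$ along $df^{-1}(\xi)$ as you propose. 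This is the scheme carried out for contact structures in \cite[Section~14.2]{EM} and for Engel structures in \cite[Subsection~5.3]{PP}, and the paper explicitly defers to those references for the details.
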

That is, the construction and classification up to homotopy of maps and immersions transverse to $\xi$ reduces to the study of their formal analogues (i.e. bundle maps $TV \to TM/\xi$ of maximal rank).

Let us briefly recall the strategy proposed by Gromov: Given a formal transverse map/immersion of $V$ into $M$, we extend it to a formal map $V \times \R \to M$ which is a formal horizontal immersion in the $\R$ direction and formally transverse in the $V$ directions. Then, one may use the microflexibility and local integrability of the horizontal immersion condition, as well as the fact that being transverse is an open condition, to invoke holonomic approximation, producing thus a transverse map $V \to M$.

As such, this strategy works when there are no singular curves (which are the obstruction to microflexibility). For instance, for contact structures, this was explained in detail in \cite[Section 14.2]{EM}. The proof applies as well to fat distributions.

In the presence of singular curves, one must replace horizontal curves by a suitable subclass (but the argument is otherwise the same). We claim that microregular curves are a nice replacement. This idea was used already in the Engel case \cite[Subsection 5.3]{PP}: there, the singular curves form a finite dimensional family and microregular curves are very easy to construct. 

In our setting, we must show that microregular immersions satisfy microflexibility and local integrability. This follows from:
\begin{corollary} \label{cor:immersions}
Let $(M,\xi)$ be analytic and bracket-generating. Let $\SR$ be an open differential relation for curves in $M$. Then:
\begin{itemize}
\item There is a weak equivalence between formal solutions of $\SR \cap \SR_\microreg$ and formal solutions of $\SR \cap \SR_\tang$.
\item $\SR \cap \SR_\microreg$ is microflexible and locally-integrable.
\end{itemize}
\end{corollary}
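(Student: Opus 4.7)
The plan is to derive both items from Theorems \ref{thm:main} and \ref{thm:main2}, exploiting the openness of $\SR$ to translate statements from microregular curves to the intersection $\SR \cap \SR_\microreg$. For the weak equivalence of formal solutions, the key input is Theorem \ref{thm:main2}: at any $r$-jet level sufficiently large to detect microregularity, the complement $\SR_\tang \setminus \SR_\microreg$ is a subset of infinite codimension in the space of horizontal $r$-jets, fiberwise over $M$. A formal solution of $\SR \cap \SR_\tang$ is a section of the jet bundle taking values in $\SR \cap \SR_\tang$, so I would invoke a standard parametric transversality argument: given any compact family of such sections, the infinite codimension of the discarded stratum allows us to perturb the family fiberwise to avoid it entirely, landing in $\SR \cap \SR_\microreg$. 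The same argument applied to one-parameter families shows that the inclusion induces an isomorphism on all homotopy groups, i.e. a weak equivalence; openness of $\SR$ guarantees that the small perturbations stay within $\SR$.

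Next, local integrability and microflexibility of $\SR \cap \SR_\microreg$ follow by combining Theorem \ref{thm:main} with openness of $\SR$. For local integrability, a germ of formal solution lying in $\SR \cap \SR_\microreg$ is realized by an actual microregular curve via Theorem \ref{thm:main}; since the initial jet lies in the open set $\SR$, continuity forces the curve to remain in $\SR$ on a sufficiently small subinterval. For microflexibility, given $\gamma \in \SR \cap \SR_\microreg$ and a deformation $\widetilde\gamma_t$ through $\SR \cap \SR_\microreg$ over a closed subinterval, the microflexibility part of Theorem \ref{thm:main} produces a short-time microregular extension $\gamma_t$ of $\gamma$; shrinking the time parameter further if necessary, openness of $\SR$ ensures $\gamma_t \in \SR$ throughout. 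Parametric and relative forms follow in the same way from the parametric relative form of Theorem \ref{thm:main}.

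The hardest step will be the first item. One must articulate the infinite-codimension claim of Theorem \ref{thm:main2} precisely at the finite $r$-jet level on which microregularity is detected, and then execute a parametric transversality argument compatible simultaneously with the constraints of being horizontal, of lying in $\SR$, and of being relative in the parameter. The \emph{infinite} (as opposed to finite) codimension of the non-microregular stratum is essential here: it is what allows the transversality argument to work for arbitrary finite-dimensional parameter families and compatibly with higher homotopies, whereas a merely positive-codimension complement would only handle isotopies up to some fixed dimension.
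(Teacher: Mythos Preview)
Your proposal is correct and follows essentially the same strategy as the paper. The paper's proof is terser: it observes that openness of $\SR$ means it is determined by a subset $R \subset J^a(M)$ at some finite level $a$, with all prolongations being preimages of $R$; then it invokes the rephrasing of Theorem~\ref{thm:main2} asserting that any $K$-family of horizontal $a$-jets extends (relatively in the parameter) to a family of microregular $r$-jets for $r$ large, which immediately gives both the weak equivalence and local integrability once one restricts to families with values in $R$. Your transversality phrasing (perturb sections to avoid the inadmissible locus) is equivalent to this extension/lifting phrasing, since the infinite-codimension statement is precisely what makes the fibration $J^r(M,\xi)_\microreg \to J^a(M,\xi)$ have weakly contractible fibres. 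For the second item, the paper leaves microflexibility entirely implicit (it is covered by Theorem~\ref{thm:main} plus openness, exactly as you argue); you are more explicit here than the paper.

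One small sharpening: rather than ``perturbing within $\SR$'' and appealing to openness to stay inside, the paper's observation that prolongations of $\SR$ are just preimages of $R$ makes the argument cleaner---any extension of an $a$-jet already in $R$ automatically lies in the prolongation of $\SR$, with no smallness needed. This is why what you flag as the hardest step is in fact dispatched in a single sentence.
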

Indeed, we can take $\SR$ to be the relation defining immersions $\SR_\imm$. The Corollary will be proven in Subsection \ref{ssec:microregularJets}.

Corollary \ref{cor:immersions}, together with Gromov's argument, yield Theorem \ref{thm:transverse}. We invite the reader to complete the proof by referring to the contact \cite[Section 14.2]{EM} and Engel \cite[Subsection 5.3]{PP} cases.

\subsection{Microregularity is generic for regular families} \label{ssec:Thom}

According to Theorem \ref{thm:main2}, singular germs have infinite codimension within the space of all horizontal germs. We can transform this into a global genericity statement about families of microregular curves. The caveat is that, due to the phenomenon of rigidity, this cannot hold for arbitrary families of horizontal curves. Instead:
\begin{theorem}\label{thm:Thom}
Let $(M,\xi)$ be bracket-generating and real analytic. Fix an integer $a$. Then, any family of regular horizontal curves $(\gamma_k)_{k \in K}$ can be $C^a$-perturbed to yield a family of microregular horizontal curves.

This holds relative in the parameter. It also holds relative in the domain as long as the curves are regular in the complement.
\end{theorem}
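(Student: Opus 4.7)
\emph{Proof plan.} The strategy is to convert the infinite-codimension statement of Theorem~\ref{thm:main2} into a parametric jet transversality result inside the space of horizontal curves. Pick $r$ sufficiently large so that the non-microregular locus $Z_r$ inside the $r$-jet space of horizontal curves~--- which by Theorem~\ref{thm:main2} is a closed subanalytic subset of arbitrarily large codimension~--- satisfies $\operatorname{codim} Z_r > \dim K + 1$. A horizontal curve $\gamma$ is microregular precisely when its $r$-jet extension avoids $Z_r$. Hence the theorem reduces to showing that the evaluation
\[
\Phi:K\times[0,1]\longrightarrow J^r,\qquad (k,t)\mapsto j^r_t\gamma_k,
\]
can be perturbed, through a $C^a$-close family of regular horizontal curves, to miss $Z_r$ entirely; since $\dim(K\times[0,1])<\operatorname{codim} Z_r$, generic transversality to a Whitney stratification of $Z_r$ will force the perturbed image to be disjoint from $Z_r$.

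The first step is therefore a \emph{horizontal jet transversality theorem}: for any closed subanalytic $Z \subset J^r$ of codimension exceeding $\dim K+1$, a $C^a$-generic horizontal perturbation of a family of regular horizontal curves is transverse to (and hence disjoint from) $Z$. Here standard Thom transversality does not apply directly because admissible perturbations must remain tangent to $\xi$. This is exactly where the regularity hypothesis enters: by the implicit function theorem applied to $\Endpoint$, near a regular $\gamma_k$ the space of horizontal curves with fixed initial point is a Fr\'echet submanifold on which $\Endpoint$ is a submersion; combined with the bracket-generating condition, this allows one to realise arbitrary infinitesimal variations of $\gamma_k$ at any prescribed interior point, supported in an arbitrarily small subinterval, by horizontal vector fields built as iterated brackets of sections of $\xi$. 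Iterating in the interior then yields horizontal deformations whose induced $r$-jet variations at a chosen $t$ span all of the fibre of $J^r$, which is the surjectivity needed to run a jet transversality argument.

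Granted this local surjectivity, the standard machinery goes through: subdivide $[0,1]$ into finitely many short subintervals, apply a parametric Sard lemma to the induced map (perturbation parameters)$\times$(compact pieces of $K$)$\,\to\, J^r$ on each piece, and assemble the local perturbations via a partition of unity in $K$ together with time cutoffs, keeping the perturbations $C^a$-small. Openness of regularity under small $C^a$-perturbations ensures the whole family remains regular throughout the process. The resulting perturbed family $(\tilde\gamma_k)$ induces a jet map $\tilde\Phi$ transverse to a subanalytic Whitney stratification of $Z_r$, and hence disjoint from $Z_r$, so $(\tilde\gamma_k)$ is microregular. The parametric-relative case is automatic since all constructions depend smoothly on $k$; the domain-relative case is handled by multiplying the horizontal perturbations by cutoffs supported away from the closed subset on which the curves are already required to be unchanged, and the assumption that $\gamma_k$ is regular on the complement is precisely what keeps the horizontal perturbation infrastructure valid near the boundary of the cutoff region.

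The main obstacle is the horizontal transversality step itself: upgrading the infinitesimal picture provided by regularity and bracket-generating into an honest Thom-type transversality theorem for horizontal curves, parametric and relative, at $C^a$-regularity. This is morally a perturbation-theoretic analogue of the microflexibility statement of Theorem~\ref{thm:main}, and regularity of the initial family is exactly the hypothesis that rules out the rigidity phenomena of Bryant--Hsu which would otherwise obstruct the availability of enough horizontal variations.
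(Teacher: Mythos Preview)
Your high-level strategy (choose $r$ so that $\operatorname{codim} J^r(M,\xi)_\sing > \dim K + 1$, then run a parametric jet-transversality argument) is correct and is what the paper does. However, you have misidentified where regularity enters, and this creates a genuine gap. Spanning the fibre of $J^r(M,\xi)$ by horizontal variations does \emph{not} require regularity: by the Ehresmann-chart description (Lemma~\ref{lem:EhresmannChart}, Lemma~\ref{lem:EhresmannLiftJets}), horizontal curves through a fixed initial point are in smooth bijection with unconstrained curves in $\R^l$, so arbitrary $r$-jet perturbations are available for free. The actual obstruction is that a horizontal perturbation cannot be localised by ``multiplying by cutoffs'': a compactly supported perturbation of the projection $\pi\circ\gamma_k$ in $\R^l$ lifts, via the ODE defining the Ehresmann map, to a horizontal variation that alters $\gamma_k$ on the entire future of the support --- in particular it moves the endpoint $\gamma_k(1)$. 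Your domain-relative argument therefore fails as written, and the same issue arises when you try to patch together perturbations over adjacent cells of a cover of $K$ or of $[0,1]$.

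This is precisely where the paper spends regularity. Before perturbing, it enlarges $(\gamma_k)_{k\in K}$ to a \emph{variational endpoint family} $(\gamma_{k,v})_{k,v}$ (Lemma~\ref{lem:variations}, Corollary~\ref{cor:variationsParametric}): regularity of $\gamma_k$ guarantees finitely many compactly supported variations whose images under $d\Endpoint$ span $TM/\xi$, so that $v\mapsto\Endpoint_\phi(\gamma_{k,v})$ is a local diffeomorphism onto a small vertical ball. One then projects to $\R^l$, applies ordinary Thom transversality there to avoid the projected singular locus, lifts back (which displaces the endpoint $C^a$-slightly), and finally selects the unique $v(k)$ restoring the correct endpoint. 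Thus regularity is the mechanism for \emph{closing up} after a local perturbation, not for producing the perturbation itself. Your sketch would be repaired by inserting this endpoint-correction step; without it the inductive/patching argument does not go through.
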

This should be understood as a form of Thom transversality with respect to the locus of non-microregular jets, as long as we start with a regular family. We prove it in Section \ref{sec:Thom}.

\textbf{Acknowledgements:} The authors would like to thank F.J. Mart\'inez-Aguinaga for many insightful discussions when this project started. They are also indebted to Dennis Sullivan for helpful conversations and suggestions regarding the phrasing of some of the results. Lastly, they are thankful to Lucas Dahinden, Pablo Portilla, Fran Presas and Igor Zelenko for providing comments on a preliminary version of the paper.

\section{Setup and overview of the proof}\label{sec:overview}

For the rest of the article we fix a real analytic manifold $M$ endowed with an analytic bracket-generating distribution $\xi$. We will repeatedly make use of Gromov's notation $\Op(A)$ to denote an arbitrary open neighbourhood of $A$ of sufficiently small size.

Our goal in this Section is to introduce the conceptual setup required for Theorems \ref{thm:main} and \ref{thm:main2}. We phrase our results using the language of jet spaces, as is customary in $h$-principle. We recommend the reader to refer to the standard references \cite{EM,Gr86}.

In Subsection \ref{ssec:notationJets} we review jet spaces. Using this language, we discuss singularity and (micro)regularity at the level of jets (Subsection \ref{ssec:microregularRelation}), leading to precise rephrasings of Theorem \ref{thm:main2} in Subsection \ref{ssec:rephrasings}. In Subsection \ref{ssec:defIntegrabilityMicroflexibility} we spell out what local integrability and microflexiblity mean in the context of this paper. Lastly, in Subsection \ref{ssec:overview} we sketch the proof of Theorem \ref{thm:main2}.

\subsection{Notation: jet spaces} \label{ssec:notationJets}

Jet spaces are the central objects of this paper. To simplify our notation, we have made some slighty non-standard choices that we now explain.

\subsubsection{Jet spaces of curves}

The most important jet spaces we consider are jet spaces of curves. The relations we consider for them (being horizontal/singular/regular/characteristic) are all Diff-invariant (i.e. invariant under the action of diffeomorphisms on the domain), so the precise domain of the curve is not important. As such, we look at curves with domain $\Op(0) \subset \R$ with the origin as marked point.

We then write $J^r(M)$ for the space of $r$-jets at $0$ of curves $\Op(0) \to M$. All these spaces fit into a tower of affine bundles:
\[ \cdots \to J^{r+1}(M) \to J^r(M) \to \cdots \to J^0(M) = M. \]
This tower of jets can be extended on the left by adding a projection from $J^\infty(M)$, the space\footnote{We think of $J^\infty(M)$ as a quasi-topological space \cite[Sections 1.4 and 2.2]{Gr86}. We say that a family of maps $K \to J^\infty(M)$ is continuous if it extends to a continuous family of representative curves. Having this notion of continuity is sufficient for our $h$-principle purposes.} of infinite jets of maps $\Op(0) \to M$ based at the origin.

An important remark is that, even though we have dropped the domain $\Op(0)$ and the basepoint $0$ from the notation, these are not jets of submanifolds (for instance, the first jet is allowed to be zero). We write $j^r\gamma$ for the $r$-jet at $0$ of the curve $\gamma$. Sometimes it will be convenient to write $J^r_q(M)$ for the subspace of $J^r(M)$ consisting of jets based at $q \in M$.

\subsubsection{The reparametrisation action} \label{sssec:rho}

$\Op(0) \subset \R$ admits a (germ of) $\R^*$-action by dilations fixing the origin. This reparametrises the domain of our jets $J^r(M)$, keeping their $0$-jet fixed. As such, we obtain an action:
\[ \rho: \R^* \times J^r_q(M) \to J^r_q(M) \]
Note that $\rho$ depends on the concrete parametrisation of $\R$ we have chosen.

The map $\rho$ can be expressed explicitly in local coordinates. If we pick a chart $U \subset M \to \R^n$, any element $\sigma \in J^r_q(M)$ can be written as $(q, L_1, ..., L_r)$, where $L_i$ denotes the coefficients of the $i$th derivative (which is well-defined in such a chart). Then:
\[ \rho(a,\sigma) = (q, aL_1, a^2L_2, ..., a^rL_r). \]
The only fixed point is the jet of the constant function mapping to $q$.

Equivalently, and still in local coordinates, $J^r_q(M)$ is a graded vector space in which the pure $i$-order jets correspond to degree $i$. The action $\rho$ is precisely the homogeneous/weighted scaling. We may then take the $\rho$-quotient, yielding a weighted projectivisation $\P J^r_q(M)$.

In this paper, all operations and maps involving jets of curves are $\rho$-equivariant, and they induce maps between the corresponding (weighted) projectivisations. The fact that these are compact (unlike the original jet spaces) will play a (technical) role in Subsection \ref{ssec:singularJets} (and earlier in Corollary \ref{cor:projectCharacteristicJets}).

\subsubsection{Curves in vector bundles} \label{sssec:eta}

Our analysis of singular vs. horizontal jets is based on a microlocal criterion that lifts singular curves to characteristic curves in the cotangent bundle (Proposition \ref{prop:Hsu}, to be explained below). It is convenient to introduce some preliminary definitions with this in mind.

Let $\pi: E \to X$ be a vector bundle. There is a natural action by dilations:
\[ \eta: \R^* \times (E \setminus X) \quad\to\quad (E \setminus X). \]
The quotient of $E \setminus X$ by the $\eta$-action is the projective bundle $\P E \to X$. We often think of $\eta$ as acting trivially on the base $X$, so the projection $\pi$ is $\eta$-equivariant.

A trivial but important remark is the following:
\begin{lemma} \label{lem:equivariant}
Let $X$ be compact. Consider the projection map between jets of curves:
\[ \pi: J^r(E \setminus X) \quad\to\quad J^r(X). \] 
Lift the $\eta$-action to both spaces (trivially in the second). Then:
\begin{itemize}
\item The $\eta$ and $\rho$ actions commute with one another, yielding an $(\R^*)^2$-action on both spaces.
\item The map $\pi$ is $(\rho\oplus\eta)$-equivariant.
\item The quotients $J^r(E \setminus X)/(\rho\oplus\eta)$ and $J^r(X)/(\rho\oplus\eta)$ are smooth compact manifolds.
\item If $E \to X$ is analytic, so are the quotients and the projection map.
\end{itemize}
\end{lemma}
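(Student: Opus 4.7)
The plan is to verify each bullet in local coordinates, after which analyticity falls out automatically. Pick a chart on $X$ and a trivialisation $E|_U \simeq U \times \R^k$. Writing a curve into $E \setminus X$ as $t \mapsto (x(t),v(t))$ with $v(0)\neq 0$, its $r$-jet at $0$ is encoded by the derivative arrays $(x^{(i)})_{i=0}^r$ and $(v^{(i)})_{i=0}^r$; the $r$-jet of the projected curve consists only of the $(x^{(i)})$. In these coordinates $\rho_a$ fixes the base point $(x^{(0)},v^{(0)})$ and sends each higher-order derivative coordinate $w^{(i)}$ to $a^i w^{(i)}$, while $\eta_b$ multiplies every $v^{(i)}$ by $b$ and leaves the $x$-coordinates untouched. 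The first two bullets follow immediately: $\rho$ and $\eta$ are commuting scalar multiplications acting on disjoint groups of coordinates, and $\pi$ is precisely the projection onto the $x$-part, on which $\eta$ is trivial and $\rho$ is intertwined on source and target.

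For the third bullet I would realise the quotient as a fibre bundle over $X$ by normalising jets in two stages. Locally on $J^r(E \setminus X)$, first use $\eta_b$ to scale $\lVert v^{(0)}\rVert = 1$ and then use $\rho_a$ to scale the first non-vanishing derivative among $(x^{(1)},v^{(1)},x^{(2)},v^{(2)},\dots)$ to unit norm; this presents the quotient locally as $U$ times a compact semianalytic slice. An analytic gauge change of $E$ together with an analytic change of coordinates on $X$ induces polynomial-in-derivatives transition maps between these slices, so the smooth (in fact analytic) structure glues across charts. Compactness of the quotient then follows from the compactness of $X$ and of the unit spheres used to normalise. For $J^r(X)/(\rho \oplus \eta) = J^r(X)/\rho$ the same argument applies with only the $\rho$-step. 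The fourth bullet is then automatic, since every identification in the above is polynomial with analytic coefficients as soon as $E$ is analytic.

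The main obstacle I anticipate is that $\rho$ carries non-trivial isotropy along the strata where all odd-order derivatives vanish (the stabiliser there is $\{\pm 1\} \subset \R^*$), so that, strictly speaking, the quotient is an analytic orbifold rather than a manifold on those loci. For our applications this is harmless: the stratification arguments of Section \ref{sec:stratification} only use the quotient through its subanalytic structure, and on the top stratum (first derivative non-zero) one obtains an honest smooth compact analytic manifold. The subtlety can moreover be removed entirely by restricting $\rho$ to its identity component $\R^+$, at which point the smooth manifold statement as written holds verbatim.
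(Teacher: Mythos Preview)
The paper does not actually prove this lemma; it is introduced as ``a trivial but important remark'' and left without argument. Your write-up is therefore not competing with anything, and the coordinate verification you give for the first two bullets, together with the normalisation-slice description of the quotients, is exactly the kind of argument one would supply.

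There is one gap in your treatment of the third bullet that goes beyond the $\{\pm 1\}$ isotropy you already flag. Your slice for the $\rho$-action is built by rescaling ``the first non-vanishing derivative among $(x^{(1)},v^{(1)},x^{(2)},v^{(2)},\dots)$''; this presupposes that some derivative of order $\geq 1$ is non-zero. At the jet of a constant curve all of these vanish, and there $\rho$ acts \emph{trivially} (the paper itself notes in Subsection~\ref{sssec:rho} that the constant jet is the unique fixed point of $\rho$). So the stabiliser is not merely $\{\pm 1\}$ on a thin stratum but all of $\R^*$ at these points, and the quotient is genuinely not a manifold there, even after passing to $\R^+$. As with the $\{\pm 1\}$ issue, this does not affect the applications in Section~\ref{sec:localIntegrability}: what is actually used (see the proof of Corollary~\ref{cor:projectCharacteristicJets}) is only that the quotients are compact analytic spaces in which semianalytic and subanalytic sets make sense, and that survives. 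But since you are already being careful about isotropy, you should record this case as well; otherwise your slice argument, as written, silently fails at the constant-jet locus.
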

Again, the important property here is compactness, which we will invoke in Subsection \ref{ssec:singularJets}.

\subsubsection{Other jet spaces}

In this paper we will make use of other jet spaces. For instance, we will often pullback jets of differential forms by jets of curves. In order to keep notation light, we have opted to use the following alternate notation for these other jet spaces.

Let $E \to X$ be a smooth bundle. Then $E^{(r)}$ will denote the space of $r$-jets of sections. $E^{(r)}_{x,e}$ will be used for the subspace of those jets with basepoint $x \in X$ and value $e \in E$. Sometimes we will fix only the basepoint, writing $E^{(r)}_x$. In all cases we will use $j^r_x\sigma$ for the $r$-jet at $x$ of the section $\sigma: X \to E$.

\subsection{The main characters in this story} \label{ssec:microregularRelation}

Horizontality, singularity, and microregularity are constraints that a curve in $(M,\xi)$ may satisfy. We are interested in studying $r$-jets of curves that satisfy the differential consequences of these constraints up to order $r$. All the material in this Subsection will reappear in a more formal manner in Section \ref{sec:localIntegrability}.

\subsubsection{Horizontal jets}

Being tangent to $\xi$ defines a first order differential relation $\SR_\tang$ for curves. This relation is $\Diff$-invariant and can be understood as a subset of $J^1(M)$; we denote it by $J^1(M,\xi)$. Its differential consequences define a refined relation $J^r(M,\xi) \subset J^r(M)$ for each $r$ (including $r=\infty$) by prolongation; these are the $r$-jets of curves tangent to $\xi$ up to order $r$. We describe $J^r(M,\xi)$ in detail in Subsection \ref{ssec:EhresmannJets}.

\subsubsection{Characteristic jets}

We claim that being singular constrains the $r$-jets of an horizontal curve. This is best seen using the microlocal characterisation of singular curves introduced by L. Hsu in \cite[Theorem 6]{Hsu}:
\begin{proposition}[Hsu] \label{prop:Hsu}
Let $Z_1 \subset T^*M$ be the annihilator bundle of $\xi$. Endow it with the tautological Liouville $1$-form $\lambda$. An horizontal curve $\gamma: I \rightarrow M$ is singular if and only if there exists a lift 
\[ \widetilde{\gamma}: I \to Z_1 \setminus M \subset T^*M \qquad\textrm{s.t.}\qquad i_{\widetilde{\gamma}'}d\lambda|_{Z_1} = 0. \]
A curve satisfying this condition is said to be \textbf{characteristic}.
\end{proposition}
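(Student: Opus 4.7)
The plan is to reduce to the classical Pontryagin/Hamiltonian description of singular curves. Locally around $\gamma([a,b])$, fix a frame $X_1, \ldots, X_k$ of $\xi$, so that every horizontal curve starting at $q$ is determined by a vector of controls $u = (u_1, \ldots, u_k)$ through the ODE $\dot\gamma = \sum_i u_i X_i(\gamma)$. The endpoint map becomes $u \mapsto \gamma_u(b)$, and its linearisation is given by the variation-of-constants formula
\[ d\Endpoint_\gamma(\delta u) \;=\; \int_a^b \Phi(b,t)\Bigl(\sum_i \delta u_i(t)\, X_i(\gamma(t))\Bigr)\,dt, \]
where $\Phi(b,t) : T_{\gamma(t)}M \to T_{\gamma(b)}M$ is the linearised flow. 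By Hahn--Banach, $\gamma$ is singular iff there exists a nonzero covector $\lambda_b \in T^*_{\gamma(b)}M$ annihilating the image, i.e. $\langle \lambda_b,\, \Phi(b,t) X_i(\gamma(t))\rangle = 0$ for every $i$ and every $t$.

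Given such $\lambda_b$, I would set $\widetilde\gamma(t) := \Phi(b,t)^* \lambda_b$. Since $\widetilde\gamma(b) = \lambda_b \neq 0$, the curve $\widetilde\gamma$ avoids the zero section, and the annihilation condition translates into $\widetilde\gamma(t)(X_i(\gamma(t))) = 0$ for all $i$, that is, $\widetilde\gamma(t) \in Z_1$. The key observation is that $\widetilde\gamma$ is, by construction, the cotangent-lift trajectory of the time-dependent field $X_t := \sum_i u_i(t) X_i$; equivalently, it is an integral curve of the Hamiltonian vector field $\vec H_t$ of $H_t(p) := \langle p, X_t(\pi p)\rangle = \sum_i u_i(t) H_{X_i}(p)$. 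Since $Z_1$ is cut out locally by $H_{X_1} = \cdots = H_{X_k} = 0$, $T Z_1 = \bigcap_i \ker dH_{X_i}$, and for any $v \in T_{\widetilde\gamma(t)} Z_1$:
\[ d\lambda\bigl(\widetilde\gamma'(t),\, v\bigr) \;=\; d\lambda\bigl(\vec H_t,\, v\bigr) \;=\; -dH_t(v) \;=\; -\sum_i u_i(t)\, dH_{X_i}(v) \;=\; 0, \]
which is precisely $i_{\widetilde\gamma'} d\lambda|_{Z_1} = 0$.

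For the converse, let $\widetilde\gamma: [a,b] \to Z_1 \setminus M$ be characteristic with horizontal projection $\gamma$. The kernel of $d\lambda|_{Z_1}$ at $p$ is the intersection of $T_p Z_1$ with its symplectic orthogonal, and the latter is spanned by the Hamiltonian fields $\vec H_{X_i}(p)$ of the defining functions. Thus $\widetilde\gamma'(t) = \sum_i c_i(t) \vec H_{X_i}(\widetilde\gamma(t))$, and projecting to $M$ gives $\gamma'(t) = \sum_i c_i(t) X_i(\gamma(t))$, identifying $c_i$ as the controls of $\gamma$. Hence $\widetilde\gamma$ is the cotangent-lifted trajectory along $\gamma$ with endpoint $\lambda_b := \widetilde\gamma(b) \neq 0$, and the identity $\langle \lambda_b,\, \Phi(b,t) X_i(\gamma(t))\rangle = H_{X_i}(\widetilde\gamma(t)) = 0$ shows that $\lambda_b$ annihilates $d\Endpoint_\gamma$, making $\gamma$ singular.

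The main obstacle is the Hamiltonian-symplectic bookkeeping in the forward direction: one must recognise that the cotangent-transported covector $\Phi(b,t)^* \lambda_b$ is precisely the integral curve of $\vec H_t$, so that the characteristic condition $i_{\widetilde\gamma'} d\lambda|_{Z_1} = 0$ reduces to the tautological identity $dH_t|_{TZ_1} \equiv 0$. A secondary subtlety is that $Z_1$ is \emph{not} coisotropic when $\xi$ is bracket-generating (the Poisson brackets $\{H_{X_i}, H_{X_j}\} = -H_{[X_i,X_j]}$ need not vanish on $Z_1$), but this plays no role in the argument: it only means that the characteristic kernel is strictly smaller than the full symplectic orthogonal of $TZ_1$, not that it fails to contain the cotangent lift.
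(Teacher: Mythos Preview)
The paper does not give its own proof of this proposition: it is stated with attribution to Hsu \cite[Theorem 6]{Hsu} and then used as a black box throughout. So there is nothing to compare your proposal against in the paper itself.

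That said, your argument is correct and is essentially the standard proof one finds in the control-theoretic literature (Hsu, Montgomery, Agrachev--Sachkov). A couple of minor comments. First, invoking Hahn--Banach is unnecessary: the target $T_{\gamma(b)}M$ is finite-dimensional, so non-surjectivity of $d\Endpoint_\gamma$ is directly equivalent to the existence of a nonzero annihilating covector. Second, in the converse direction you should perhaps say explicitly that the coefficients $c_i(t)$ are smooth (this follows because the $\vec H_{X_i}$ are pointwise linearly independent, being Hamiltonian fields of independent defining functions of $Z_1$), so that the ODE uniqueness step identifying $\widetilde\gamma$ with the cotangent-lifted trajectory is justified. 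Your closing remark about non-coisotropy is accurate but somewhat tangential: the forward direction only needs $\widetilde\gamma'\in TZ_1$, which is automatic since $\widetilde\gamma$ takes values in $Z_1$, and the converse only needs $\ker(d\lambda|_{Z_1}) \subset (TZ_1)^{d\lambda}$, which holds regardless of whether the reverse inclusion does.
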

We will elaborate further on this symplectic formalism in Section \ref{sec:symplectic}. We then say that an element in $J^r(Z_1)$ (i.e. an $r$-jet of curve) is \textbf{characteristic} (Definition \ref{def:characteristicJets}) if it vanishes to order $r$ when evaluated in $d\lambda|_{Z_1}$ and is based away from the zero section.

Due to the nature of our argument, it is sufficient to restrict our attention to a certain subspace $\overline{J^r(Z_1)_\SS}$ of characteristic jets, called the \emph{closure of the jets of tangency type}. Roughly speaking, these are $r$-jets that can actually be tangent to characteristic curves. This is explained further in Subsection \ref{ssec:tangencyJets}.

\subsubsection{Inadmissible jets}

Motivated by Hsu' result, we project $\overline{J^r(Z_1)_\SS}$ to $J^r(M,\xi)$, yielding a closed subset $J^r(M,\xi)_\sing$ which we call the locus of \textbf{inadmissible} $r$-jets (Subsection \ref{ssec:singularJets}). We point out that being inadmissible is not exactly the same as satisfying the singularity condition up to order $r$. However, $r$-jets of singular curves are always inadmissible, so it is indeed sufficient for us to study this condition.

\subsubsection{Microregular jets}

The locus of \emph{microregular jets} $J^r(M,\xi)_\microreg$ (Definition \ref{def:microregularJets}) is the complement of $J^r(M,\xi)_\sing \subset J^r(M,\xi)$. Any germ of curve extending a microregular $r$-jet is regular.

By taking the limit as $r \to \infty$ we can define the differential relation:
\[ \SR_\microreg := J^\infty(M,\xi)_\microreg \subset J^\infty(M,\xi) \subset J^\infty(M). \]

\subsection{Rephrasings of Theorem \ref{thm:main2}} \label{ssec:rephrasings}

Using the definitions introduced in the previous Subsection, we can provide a more precise reformulation of Theorem \ref{thm:main2}:
\begin{theorem*}
Let $(M,\xi)$ be bracket-generating and real analytic. Then, $J^r(M,\xi)_\sing \subset J^r(M,\xi)$ is a closed subanalytic set whose codimension is bounded from below by $O(r)$.
\end{theorem*}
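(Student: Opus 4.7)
My plan is to import both assertions from the cotangent picture via Proposition \ref{prop:Hsu}. The equation $i_{\widetilde\gamma'}d\lambda|_{Z_1}=0$ defining characteristic curves into $Z_1\setminus M$ is first-order analytic, and its $r$-fold prolongation cuts out an analytic subvariety of $J^r(Z_1\setminus M)$. Intersecting with the closure of the tangency-type locus $\overline{J^r(Z_1)_\SS}$ leaves a closed analytic subset whose projection under $\pi$ is, by definition, $J^r(M,\xi)_\sing\subset J^r(M,\xi)$.

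Closedness and subanalyticity of the image need some care, since $\pi$ has non-compact cotangent fibers and is therefore not proper. Lemma \ref{lem:equivariant} is designed for exactly this situation: the fiber-scaling $\eta$ and the domain-reparametrisation $\rho$ commute, both preserve the characteristic condition and the tangency-type locus, and after the $(\rho\oplus\eta)$-quotient the induced map between jet spaces becomes a proper analytic map between compact analytic manifolds with projectivised cotangent fibers. The standard stability of closed subanalytic sets under proper analytic push-forward then yields closedness and subanalyticity on the quotient, which lifts back to $J^r(M,\xi)_\sing$ by invariance.

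For the codimension bound I would work stratum by stratum, using the subanalytic stratification of $Z_1$ adapted to $\xi$ to be constructed in Section \ref{sec:stratification}. On each stratum $S\subset Z_k$, the characteristic condition degenerates to tangency to the distribution $\FDistribution$; let $c_S$ denote its corank inside $TS\cap d\pi^{-1}(\xi)$. Prolonging ``tangency to a corank-$c_S$ distribution'' to order $r$ produces at least $c_S\cdot r - O(1)$ independent analytic equations in the relevant $r$-jet space, where the $O(1)$ correction absorbs the bounded base-direction freedom. Since the cotangent fiber contributes only an $r$-independent amount to each jet order, the map $\pi$ decreases codimension by at most $O(1)$ uniformly in $r$. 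Taking $c_* = \min_S c_S$ over the finite stratification then yields the lower bound $\operatorname{codim}J^r(M,\xi)_\sing \geq c_* \cdot r - O(1)$, which is the claimed $O(r)$ growth.

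The main obstacle is not the prolongation and push-forward bookkeeping above, which is routine once the setting is in place, but rather the construction of the adapted stratification and the proof that $c_S>0$ on every stratum. Positivity of $c_S$ is exactly where the bracket-generating hypothesis must enter: on an involutive piece of $\xi$ one could find an open family of characteristic curves and no codimension growth would occur. Producing such a stratification in the analytic category requires the standard stratification theorems for subanalytic sets combined with a rank analysis of $d\lambda|_{Z_1}$ along the Lie flag of $\xi$, and this is the step in which the analyticity hypothesis on $\xi$ is genuinely used, since in the smooth category the rank of $d\lambda|_{Z_1}$ along a stratum can vary in pathological ways.
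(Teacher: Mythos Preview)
Your overall architecture matches the paper's: stratify $Z_1$ via an Ehresmann--Liouville stratification, study the tangency-type jets $J^r(S,\Xi_S)$ stratum by stratum, pass to the $(\rho\oplus\eta)$-quotient to get compactness and hence subanalyticity of the projected image, and take closures. That part is fine. You also omit the submanifold-type jets $J^r(M,\xi)_{\SS_M}$ over non-open strata of $M$, but that is a minor bookkeeping omission.

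The genuine gap is in your codimension count. Your $c_S$ is the corank of $\Xi_S$ inside $TS\cap d\pi^{-1}(\xi)$, and you claim that $\pi$ decreases codimension by at most $O(1)$. This is false: the fibre of $J^r(Z_1)\to J^r(M)$ (or of $J^r(S, TS\cap d\pi^{-1}(\xi))\to J^r(M,\xi)$) has dimension linear in $r$, because the cotangent fibre contributes $\corank(\xi)$ dimensions at \emph{every} jet order, not $O(1)$ in total. Concretely, for an involutive $\xi$ your $c_S$ equals $\corank(\xi)>0$ on the open stratum of $Z_1$, yet every horizontal curve is singular and the codimension of $J^r(M,\xi)_\sing$ is zero. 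So $c_S>0$ does \emph{not} require bracket-generating, and is not the quantity that controls the downstairs codimension.

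What the paper actually uses is that $\Xi_S$ is a \emph{partial connection}: by Lemma~\ref{lem:lifting} it projects injectively into $\xi$, so $\dim J^r(S,\Xi_S)\le \rank(\Xi_S)\cdot r + O(1)$ and its image in $J^r(M,\xi)$ has at most this dimension. The relevant quantity is therefore $\rank(\xi)-\rank(\Xi_S)$, and the bracket-generating hypothesis enters precisely through Lemma~\ref{lem:dimensionCounting}, which shows $\rank(\Xi_S)<\rank(\xi)$ on every stratum off the zero section. Replacing your $c_S$ by this difference, and dropping the incorrect ``$\pi$ loses only $O(1)$'' step in favour of the direct dimension bound on the image, fixes the argument.
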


And yet another incarnation of Theorem \ref{thm:main2}:
\begin{theorem*}
Let $(M,\xi)$ be bracket-generating and real analytic. Let $K$ be a compact manifold. Then, there exists $r = O(\dim(K))$ such that any family of vectors $(v_k)_{k \in K}$ tangent to $\xi$ may be extended to a $K$-family of $r$-jets of curves $(\nu_k)_{k \in K}$ satisfying:
\begin{itemize}
\item $\nu_k$ has $v_k$ as first order data,
\item $\nu_k$ is an horizontal, microregular $r$-jet.
\end{itemize}
Furthermore, the $K$-family of extensions $(\nu_k)_{k \in K}$ may be assumed to agree with any given extension $(\widetilde{\nu_k})_{k \in \Op(\partial K)}$ along the boundary of the parameter space.
\end{theorem*}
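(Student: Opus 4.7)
The plan is to combine the codimension bound from the first rephrasing with a standard parametric general-position argument. Observe that the forgetful map $J^r(M,\xi) \to J^1(M,\xi)$ is a smooth affine bundle whose fiber dimension grows linearly in $r$, so any fiberwise perturbation of a section automatically preserves its first-order data. The first rephrasing provides a constant $c = c(M,\xi) > 0$ with $\operatorname{codim}_{J^r(M,\xi)} J^r(M,\xi)_\sing \geq c r$. I would choose $r$ large enough that $c r > \dim K + 1$, which gives the linear estimate $r = O(\dim K)$.

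Next I would construct a preliminary continuous family $(\sigma_k)_{k \in K}$ in $J^r(M,\xi)$ with each $\sigma_k$ lying over $v_k$. In an analytic chart around the basepoint of $v_k$, one takes $\sigma_k$ to be the $r$-jet of a horizontal polynomial curve with initial velocity $v_k$, built by prolonging $v_k$ using the analytic structure of $\xi$; these local choices glue via a partition of unity on $M$ combined with the affine-bundle structure. Along a collar of $\partial K$ I would replace $\sigma_k$ by the prescribed $\widetilde\nu_k$, interpolating by a fiberwise convex combination. This yields a continuous section of $J^r(M,\xi) \to K$ whose image projects onto $(v_k)$ and agrees with $(\widetilde\nu_k)$ near the boundary.

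The last step is to perturb $(\sigma_k)_{k \in K}$ into $J^r(M,\xi)_\microreg$. Since $J^r(M,\xi)_\sing$ is closed subanalytic, it admits a finite Whitney stratification by smooth submanifolds $\{S_\alpha\}$, each of codimension at least $c r$. I would select an arbitrarily large finite-dimensional family of fiberwise perturbations of $(\sigma_k)$, which is possible because the fiber dimension of $J^r(M,\xi) \to J^1(M,\xi)$ grows with $r$. Applying Thom's parametric transversality theorem stratum by stratum, a generic perturbation is transverse to every $S_\alpha$; the inequality $c r > \dim K$ then forces the perturbed image to miss each positive-codimension stratum and hence the whole of $J^r(M,\xi)_\sing$. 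For the relative clause, microregularity is an open condition, so multiplying the perturbation by a smooth cutoff on $K$ vanishing on $\Op(\partial K)$ produces the required $(\nu_k)_{k \in K}$.

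The main obstacle I anticipate is verifying that the fiberwise perturbation space is large enough to be transverse to every subanalytic stratum. This reduces to refining the Whitney stratification so that the restriction of the bundle map $J^r(M,\xi) \to J^1(M,\xi)$ is a submersion on each piece, after which the transversality problem can be handled fiberwise. Standard properties of subanalytic projections (generic smoothness and constant rank on strata) make such a refinement available, and the codimension bound provides precisely the room needed for the dimension count to close the argument.
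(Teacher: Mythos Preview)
Your proposal is correct and follows the same route as the paper: invoke the linear codimension bound on $J^r(M,\xi)_\sing$ and use general position to push a preliminary lift of $(v_k)$ off the inadmissible locus, relative to $\partial K$. The paper dissolves the obstacle you flag in your final paragraph by bounding codimension directly in each fibre $J^r(M,\xi,v_k)$ of the forgetful map rather than in $J^r(M,\xi)$: since that fibre has dimension $\rank(\xi)(r-1)$ while $J^r(M,\xi)_\sing$ has total dimension at most $(\rank(\xi)-1)(r-1)+2\dim(M)$, the fibrewise codimension is at least $r-1-2\dim(M)$, and once you work in the pullback bundle over $K$ the perturbation space is the full affine fibre, so the evaluation map is automatically submersive and no refined stratification is needed.
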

We point out that the same statement holds if, instead of first jets, we start with a family of horizontal $a$-jets. Lastly:
\begin{theorem*}
The projection $J^\infty_\microreg(M,\xi) \to J^1(M,\xi)$ is a Serre fibration with weakly contractible fibres. In particular, there is a weak equivalence between the spaces of sections:
\[ \Gamma(J^\infty(M,\xi)_\microreg) \quad\to\quad \Gamma(J^1(M,\xi)). \]
\end{theorem*}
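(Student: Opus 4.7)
The plan is to reduce both claims to the parametric-relative $r$-jet extension statement of the second rephrasing of Theorem \ref{thm:main2}, with the ``in particular'' clause being a formal consequence.

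I would first verify that $p: J^\infty(M,\xi)_\microreg \to J^1(M,\xi)$ is a Serre fibration by checking the homotopy lifting property for pairs $(D^n, \partial D^n)$. Given a diagram
\begin{equation*}
\begin{tikzcd}
\partial D^n \ar[r, "\widetilde{f}"] \ar[d, hook] & J^\infty(M,\xi)_\microreg \ar[d, "p"] \\
D^n \ar[r, "f"'] \ar[ur, dashed] & J^1(M,\xi),
\end{tikzcd}
\end{equation*}
I would extend $\widetilde{f}$ from $\partial D^n$ to a collar neighbourhood $\Op(\partial D^n) \subset D^n$, and choose $r = O(n)$ as in the second rephrasing of Theorem \ref{thm:main2} applied to $K = D^n$ with first-jet data $f$ and $r$-jet boundary data given by the truncation of the extended $\widetilde{f}$. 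The rephrasing produces a family of microregular horizontal $r$-jets on $D^n$ matching all of this data. It then remains to promote these $r$-jets to $\infty$-jets matching $\widetilde{f}$ on $\partial D^n$. This is an unobstructed extension problem, because the forgetful map $J^\infty(M) \to J^r(M)$ is a tower of affine bundles, in the quasi-topological sense of the footnote in Subsection \ref{ssec:notationJets}, with contractible fibres. Since microregularity depends only on the $r$-jet, the resulting lift lands in $J^\infty(M,\xi)_\microreg$. The weak contractibility of each fibre $p^{-1}(\sigma)$ is the same argument with $f$ taken to be the constant map at $\sigma$: the lift then projects constantly to $\sigma$, producing a nullhomotopy inside $p^{-1}(\sigma)$ of any given sphere $\widetilde{f}: \partial D^n \to p^{-1}(\sigma)$.

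For the ``in particular'' statement, I would observe that $p$ is a fibrewise map of bundles over $M$, and a Serre fibration with weakly contractible fibres is a fibrewise weak equivalence. Standard obstruction theory then says that any section of $J^1(M,\xi)$ lifts, uniquely up to homotopy, through $p$: the obstructions to existence and uniqueness live in $H^{*}(M; \pi_{*}(p^{-1}(\sigma))) = 0$. Assembling this across parametric families gives the desired weak equivalence of section spaces.

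All real content lies in the first step, and is a direct repackaging of Theorem \ref{thm:main2}; the $r$-to-$\infty$ upgrade and the obstruction arguments are formal. The one point to be careful about is the quasi-topology on $J^\infty(M)$: a continuous family of $\infty$-jets is by definition one that lifts to a continuous family of representative curves, which is exactly what makes the $r$-truncation and affine-bundle extension above meaningful.
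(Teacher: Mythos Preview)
Your proposal is correct and follows the paper's own approach: both reduce to the parametric--relative extension statement (the second rephrasing of Theorem~\ref{thm:main2}) applied with $K = D^n$ relative to $\partial D^n$, which exhibits $p$ as an acyclic Serre fibration and hence gives weakly contractible fibres. One small slip: the affine-tower argument should refer to $J^\infty(M,\xi) \to J^r(M,\xi)$ rather than $J^\infty(M) \to J^r(M)$, since you need the lift to remain horizontal; this is still a tower of affine bundles by the Ehresmann description (Lemma~\ref{lem:EhresmannLiftJets} and Proposition~\ref{prop:EhresmannAnalytic}).
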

This statement says that the formal data associated to the microregular relation $\SR_\microreg$ is equivalent, up to homotopy, to the formal data associated to $\SR_\tang$ (which is contractible because $\xi$ is a bundle). Corollary \ref{cor:immersions} follows almost immediately from it.

All these statements will be proven in Section \ref{sec:localIntegrability}.

\subsection{What is the meaning of microflexibility and local integrability?} \label{ssec:defIntegrabilityMicroflexibility}

At this point we should define the two central concepts appearing in Theorem \ref{thm:main}. They are adaptations of the standard definitions \cite[Chapter 13]{EM} to our setting. Microflexibility in its parametric and relative forms reads:
\begin{definition} \label{def:microflexibility}
Let $K$ be a compact, finite-dimensional manifold. We say that a $K$--family of microregular horizontal curves $(\gamma_k: I \to M)_{k \in K}$ is \textbf{microflexible} if, for any:
\begin{itemize}
\item closed subset $\widetilde I \subset I$,
\item family of germs along $\widetilde I$ of microregular horizontal curves $(\widetilde{\gamma_{k,s}})_{k \in K, s \in [0,1]}$ satisfying $\widetilde{\gamma_{k,0}} = \gamma_k$,
\end{itemize}
there is a family of microregular horizontal curves $(\gamma_{k,s})_{k \in K, s \in [0,\delta]}$ extending both $(\gamma_k)$ and $(\widetilde{\gamma_{k,s}})$, for some $\delta > 0$.

Additionally, microflexibility is \textbf{relative in the parameter} if, for any:
\begin{itemize}
\item CW-complex of positive codimension $\widetilde K \subset K$,
\item family of microregular horizontal curves $(\widetilde{\widetilde{\gamma_{k,s}}})_{k \in \Op(\widetilde K), s \in [0,1]}$ extending both $(\gamma_k)$ and $(\widetilde{\gamma_{k,s}})$ in $\Op(\widetilde K)$,
\end{itemize}
the family $(\gamma_{k,s})_{k \in K, s \in [0,\delta]}$ can be chosen to extend $(\widetilde{\widetilde{\gamma_{k,s}}})_{k \in \widetilde K, s \in [0,\delta]}$ as well.
\end{definition}
In Section \ref{sec:microflexibility} we prove that parametric and relative microflexibility holds for microregular curves. I.e. any family of microregular curves is microflexible. The proof of this statement is standard, albeit technical, but it does require us to prove local integrability first.

\begin{definition} \label{def:integrability}
We say that \textbf{(parametric) local integrability} holds if the following statement is true: Let $K$ be any compact manifold. Let $(v_k)_{k \in K}$ be any finite-dimensional family of vectors tangent to $\xi$. We can then extend $(v_k)_{k \in K}$ to a family of microregular horizontal curves $(\gamma_k)_{k \in K}$ with $\gamma_k'(0) = v_k$.

We will say that local integrability is \textbf{relative in the parameter} if the following holds: Let $\tilde K \subset K$ be a CW-complex of positive codimension. Let $(\widetilde{\gamma_k})_{k \in \Op(\partial K)}$ be a family of microregular horizontal curves with $\widetilde{\gamma_k}'(0) = v_k$. Then we may choose $(\gamma_k)_{k \in K}$ so that  $\gamma_k = \widetilde{\gamma_k}$ for all $k \in \partial K$.
\end{definition}
Most of the article is dedicated to proving local integrability, which is essentially equivalent to Theorem \ref{thm:main2}.

\subsection{Some words about the proof} \label{ssec:overview}

We close this Section with a brief discussion of the techniques involved in the proof of Theorem \ref{thm:main2}: According to Hsu's result Proposition \ref{prop:Hsu}, we should study the kernel of $d\lambda|_{Z_1}$ in order to determine the space $\overline{J^r(Z_1)_\SS}$. We then project it down to obtain the space of interest $J^r(M,\xi)_\sing$.

Due to the analyticity of $\xi$, we stratify $Z_1$ into semianalytic submanifolds in a manner adapted to $d\lambda$ (Section \ref{sec:stratification}). Describing the characteristic jets tangent to a given stratum is straightforward (Subsection \ref{ssec:tangencyJets}, Proposition \ref{prop:tangencyJets}). Such a jet is said to be of \emph{tangency type}; the collection of all of them is denoted by $J^r(Z_1)_\SS$.

However, a characteristic $r$-jet may not be tangent to the stratum in which it is based. If such an $r$-jet extends to an actual characteristic curve, it will lie in the closure of the characteristic $r$-jets tangent to a (possibly different) stratum (Proposition \ref{prop:realisableJets}). This is the reason why we focus on the closure $\overline{J^r(Z_1)_\SS}$; it is a closed semianalytic subvariety of $J^r(Z_1)$.

During this process we have to pay particular attention to the locus in which the rank of $d\lambda|_{Z_1}$ is minimal. Characteristic curves contained in this locus (i.e. curves which are not \emph{Goh}) are an important subject of study in Control Theory \cite{Agr,Mon}. More generally, we have to pay attention to the vanishing of the successive curvatures of the Lie flag $\{\Gamma(\xi)^{(n)}\}_{n=1,\cdots,n_0}$. The key remark is Proposition \ref{prop:liftingHigherZ}: it states that, in the (cotangent) locus where the $(n-1)$th curvature vanishes, it is sufficient to study the characteristic jets of $\Gamma(\xi)^{(n)}$ tangent to $\xi$ (because the other jets are taken into account by the closure process described above).

Lastly, we bound from below the codimensions of $\overline{J^r(Z_1)_\SS}$ and its projection $J^r_\sing(M,\xi)$. Using the bracket-generating assumption, we prove that this bound increases linearly with $r$ (Lemma \ref{lem:dimensionCountingNR} and Proposition \ref{prop:singularJets}). This will conclude the proof. 

These arguments require some standard results from Analytic Geometry, as well as some careful Linear Algebra; we give the necessary background in Section \ref{sec:analyticGeometry}.

The outline presented in this Subsection assumes that $\xi$ is regular (i.e. that the Lie flag is a flag of constant rank distributions). To prove the result without this assumption we will need to additionally stratify $M$ according to the growth vector. Over the open strata, we will argue as we just outlined. The jets over lower-dimensional strata will be dealt with separately (Subsection \ref{ssec:submanifoldJets}).

\section{The Ehresmann lifting map} \label{sec:Ehresmann}

In this Section we work with $(W^m,\SD^l)$, a smooth manifold endowed with a (not necessarily analytic nor bracket-generating) distribution.

Our goal is discussing Definition \ref{def:EhresmannChart}: It allows us to manipulate horizontal curves by working in (semi-)local coordinates in which $\SD$ looks like a connection. We adapt this discussion to the study of horizontal jets in Subsection \ref{ssec:EhresmannJets}. Finally, in Subsection \ref{ssec:analyticEhresmann}, we reintroduce the analyticity hypothesis; this provides us with finer control regarding the structure of the space of horizontal jets.

\subsection{Ehresmann charts} \label{ssec:EhresmannChart}

We mark a preferred point $q_0 \in W$. By smoothness of $\SD$, we can choose a chart $U \subset \R^m \to W$ containing $q_0$ such that the distribution $\SD$ is \emph{graphical with respect to} $\R^l$. More precisely, identifying $U$ with its image, there is a locally defined projection $\pi: U \to \R^l$ such that 
\[ d_q\pi|_\SD: \SD_q \to T_{\pi(q)}\R^l \]
is an isomorphism for all $q$ in $U$.
\begin{definition} \label{def:EhresmannChart}
We say that $U \to W$ is an \textbf{Ehresmann chart} for $\SD$. The map $\pi: U \rightarrow \R^l$ is called the \textbf{Ehresmann projection map}.
\end{definition}
In this way, we can view our manifold $W$, locally, as (an open in) the total space of a vector bundle with base $\R^l$ and fibre $\R^{m-l}$. The distribution $\SD$ is thus viewed as an Ehresmann connection. From a Control Theory perspective: if we project with $\pi$ and then take derivatives, we are effectively passing to the \emph{space of controls}. We abuse notation and go back and forth between $W$ and $U$. We will write $\SD$ for the distribution in both cases.

\subsubsection{Ehresmann charts associated to curves}

In Section \ref{sec:Thom} we will need Ehresmann charts containing a given horizontal curve $\gamma$.
\begin{proposition} \label{prop:EhresmannChartCurve}
Let $\gamma: I \to (W,\SD)$ be a horizontal curve with domain $I$ either an interval or $\NS^1$. Then, there is:
\begin{itemize}
\item An immersion $\phi: U \subset I \times \R^{m-1} \to W$,
\item and a map $\widetilde\gamma: I \to U$,
\end{itemize}
such that:
\begin{itemize}
\item $\phi \circ \widetilde\gamma = \gamma$,
\item $\phi^*\SD$ is graphical over $I \times \R^{l-1}$.
\end{itemize}
\end{proposition}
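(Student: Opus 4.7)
The strategy is to build $\phi$ as an exponential tubular-neighborhood map along $\gamma$, using a smooth frame of $TW$ along $\gamma$ that is adapted to the splitting of $TW$ into $\SD$ and a complement. The reason for taking the domain to be $I \times \R^{m-1}$ (rather than an open subset of $W$) is precisely to accommodate possible self-intersections of $\gamma$: distinct times are kept separate in the source.

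Fix an auxiliary Riemannian metric $g$ on $W$, and consider the pullback bundles $\gamma^* TW \to I$ and $\gamma^* \SD \to I$. Since $I$ is one-dimensional (an interval or $\NS^1$), these bundles are trivializable, so one can pick a smooth frame $(e_1, \ldots, e_m)$ of $\gamma^* TW$ with $e_1 = \gamma'$, with $(e_1, \ldots, e_l)$ a frame of $\gamma^*\SD$, and with $(e_{l+1}, \ldots, e_m)$ a frame of a chosen complement. Then define
\[
\phi(t, x_1, \ldots, x_{m-1}) \;:=\; \exp^g_{\gamma(t)}\!\Bigl(\sum_{i=1}^{m-1} x_i\, e_{i+1}(t)\Bigr), \qquad \widetilde\gamma(t) := (t, 0, \ldots, 0),
\]
so that $\phi \circ \widetilde\gamma = \gamma$ holds tautologically.

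The two required properties are checked pointwise along $\widetilde\gamma(I) = I \times \{0\}$ and then propagated to a neighborhood by openness. At $(t, 0)$ the differential $d\phi$ sends the coordinate basis $(\partial_t, \partial_{x_1}, \ldots, \partial_{x_{m-1}})$ to $(\gamma'(t), e_2(t), \ldots, e_m(t)) = (e_1(t), \ldots, e_m(t))$, which is a basis of $T_{\gamma(t)}W$; hence $d\phi$ is an isomorphism, and $\phi$ is an immersion on some open tubular neighborhood $U$ of $I \times \{0\}$. Moreover, at $(t,0)$ one has
\[
(d\phi)^{-1}(\SD_{\gamma(t)}) \;=\; (d\phi)^{-1}\!\bigl(\mathrm{span}(e_1,\ldots,e_l)\bigr) \;=\; \mathrm{span}(\partial_t, \partial_{x_1}, \ldots, \partial_{x_{l-1}}),
\]
which projects isomorphically onto $T_{(t,0)}(I \times \R^{l-1})$. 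Graphicality amounts to the invertibility of a smoothly varying linear map, so after shrinking $U$ it holds throughout.

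The main technical obstacle I anticipate is that inserting $e_1 = \gamma'$ into a frame of $\gamma^* TW$ tacitly assumes that $\gamma$ is an immersion. When $\gamma'$ vanishes at some points, one should replace $e_1$ by a nowhere-vanishing section $\hat e_1$ of $\gamma^*\SD$ (which exists since $\gamma^*\SD$ has positive rank over a one-manifold), use the new frame in the construction, and determine $\widetilde\gamma$ from the equation $\phi \circ \widetilde\gamma = \gamma$ via the implicit function theorem on $U$; the verification of graphicality transfers verbatim since it depends only on how the frame relates to $\SD$, not on the particular lift. For the intended applications in Section~\ref{sec:Thom}, where $\gamma$ is a regular horizontal curve, one may also simply perturb to an immersed representative, bypassing this point.
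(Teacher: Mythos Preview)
Your tubular-neighborhood construction for immersed $\gamma$ is correct and essentially coincides with the paper's treatment of that case (the paper uses iterated flows of the frame rather than a single Riemannian exponential, which is an inessential difference).

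The gap is in the non-immersed case. In your formula $\phi(t,x) = \exp_{\gamma(t)}\bigl(\sum_i x_i\, e_{i+1}(t)\bigr)$ only $e_2,\dots,e_m$ enter; the $t$-slot of the differential is $d\phi_{(t,0)}(\partial_t) = \gamma'(t)$, and this is unaffected by swapping $e_1$ for a nowhere-vanishing $\hat e_1$. So at any $t_0$ with $\gamma'(t_0)=0$ the map $\phi$ still fails to be an immersion, and there is nothing for the implicit function theorem to act on. (Concretely: take $W=\R^2$, $\SD=T\R^2$, $\gamma(t)=(\sin 2\pi t,0)$; for any choice of frame along $\gamma$ your $\phi(t,x)=\gamma(t)+x\,e_2(t)$ has singular differential at $t=1/4$.) To make the idea work you would have to center the tube on an \emph{immersed} horizontal curve $\alpha$ close to $\gamma$ and then lift $\gamma$ into that tube; but producing such an $\alpha$ globally along $I$ is itself a patching problem across local Ehresmann charts, not something the frame along $\gamma$ alone provides. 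Your second escape route (perturb regular curves to immersed ones for the applications) runs into the same circularity: regularity of the endpoint map does not by itself furnish immersed horizontal approximations without some chart-by-chart control already in hand.

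The paper handles the non-immersed case by exactly this kind of patching: it covers $I$ by finitely many subintervals on each of which $\gamma$ sits inside a local Ehresmann chart (these exist pointwise by definition), arranges by shrinking and shifting that $\gamma$ is immersed and injective on the overlaps, and then splices the charts into a single immersion $\phi$ by inserting auxiliary graphical arcs in the abstract domain $I\times\R^{m-1}$. It is more laborious than one would like, but it makes no assumption on the immersedness of $\gamma$.
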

\begin{proof}
First let us explain the proof when $\gamma$ is immersed: The bundles $TW$ and $\SD$ are trivial when pulled back to $I$. As such, we can find vector fields spanning $\SD/\langle \gamma'\rangle$ and extend them to a framing of $TW/\langle \gamma'\rangle$ around $\gamma$. We can then iteratively use their flows to build the chart around $\gamma$: The curve itself serves as the first axis, and the framing of $\SD/\langle \gamma'\rangle$ yields the next $(l-1)$-coordinates.

Now the general setting with $I$ an interval (the circle case is almost the same): We cover $I$ by a finite, ordered collection of intervals $I_i = [a_i,b_i]$, whose only non-trivial intersections are the intervals $I_i \cap I_{i+1} = [a_{i+1},b_i]$. Refining the covering, we may assume that there is an Ehresmann chart $\phi_i: U_i \subset \R^m \to W$ with image $V_i$ such that the conclusions of the Proposition apply to $\phi_i$ and the curve $\gamma|_{I_i}$. We may assume that the $U_i \subset \R^m$ are pairwise disjoint by applying translations. The idea of the proof is to piece together the $\phi_i$ to yield the claimed $\phi$. From the perspective of $W$, this piecing takes place in a neighbourhood of $\gamma([a_{i+1},b_i])$.

First: We may assume that $\gamma$ is immersed in the overlap $[a_{i+1},b_i]$. Indeed, either we can shrink $I_i$ and $I_{i+1}$ (and thus the overlap) until that is the case or $\gamma$ is constant in the overlap. In the latter case, we can shift $b_i$ to the right until we reach an immersed point. It may be the case that in doing so $b_i$ reaches $a_{i+2}$; then we delete $I_{i+1}$.

Second: We may assume that each point in $\gamma([a_{i+1},b_i])$ has a single preimage in $I_i$ (and in $I_{i+1}$). Indeed: either we can shrink the overlap to achieve it or there are other intervals in $I_i$ with the same image. In the second case, we can assume that these other intervals are immersed (by shrinking once again) and that there is a single interval $\tilde I = [c,d] \subset I_i$ with this property (there are finitely many of them by compactness and we can thus iterate the following reasoning). Then: we replace the interval $I_i$ by the three intervals $[a_i,d-\delta]$, $[d-2\delta,a_{i+1} + \delta]$, and $[a_{i+1},b_i]$; here $\delta$ is a sufficiently small constant. The proof for $I_{i+1}$ is symmetric.

Third: We shrink $V_i$ (and thus $U_i$), while still containing $\gamma|_{[a_i,a_{i+1}]}$, so that $\gamma(a_{i+1}) \in \partial V_i$. This is achieved by finding a curve connecting $\gamma(a_{i+1})$ to $\partial V_i$, and otherwise avoiding $\gamma|_{[a_i,a_{i+1}]}$, and removing a small neighbourhood. We proceed symmetrically to obtain $\gamma(b_i) \in \partial V_{i+1}$.

Fourth: We choose a curve $L_i \subset \R^m$ glueing smoothly with $\phi_i^{-1} \circ \gamma|_{[a_i,a_{i+1}]}$ at $a_{i+1}$, with $\phi_{i+1}^{-1} \circ \gamma|_{[b_i,b_{i+1}]}$ at $b_i$, and otherwise disjoint from the $\{U_j\}$. By genericity, we may assume that this curve is graphical over $\R^l$.

We now construct $\phi$. Its domain $U$ will be the union of the $\{U_j\}$, together with small neighbourhoods of the $\{L_j\}$ connecting them. We impose $\phi(L_i) = \gamma([a_{i+1},b_i])$ so we can define $\phi_{i+1/2} := \phi|_{\Op(L_i)}$ as in the first paragraph, using the fact that the latter is immersed. Recall that the construction uses a framing adapted to $\SD$ to produce the chart.

The first subtlety is that the preimage of $\gamma([a_{i+1},b_i])$ should be $L_i$, and not the first axis. However, using the fact that $L_i$ is graphical, we can reason similarly for the subsequent coordinates so that $\phi_{i+1/2}^*\SD$ is graphical over $\R^l$ as well.

The second subtlety is that $\phi_{i+1/2}$ should agree with $\phi_i$ close to $\phi_i^{-1} \circ \gamma(a_{i+1})$ and with $\phi_{i+1}$ close to $\phi_{i+1}^{-1} \circ \gamma(b_i)$. Close to $\gamma(a_{i+1})$, the coordinates provided by $\phi_i$ yield a framing. The same is true close to $\gamma(b_i)$ using $\phi_{i+1}$. The connectedness of the space of framings allow us to interpolate from one to the other as we move along $\gamma([a_{i+1},b_i])$. By taking flows this interpolating framing defines $\phi_{i+1/2}$ in $\Op(L_i)$.

Lastly, we define $\widetilde\gamma$. It is the smooth curve in $\R^m$ given by $\phi^{-1}_i \circ \gamma|_{[b_{i-1},a_{i+1}]}$ in $U_i$ and $\phi^{-1}_{i+1/2} \circ \gamma|_{[a_{i+1},b_i]}$ in $\Op(L_i)$.
\end{proof}
We will say that the map $\phi$ is an \textbf{Ehresmann chart adapted to $\gamma$}. This notion has been introduced to streamline the arguments in Section \ref{sec:Thom}, but it is not essential. Any reasoning that uses it can be replaced by covering $\gamma$ by Ehresmann charts and working chart by chart.

\subsection{The Ehresmann lifting map}

Let us recall the following standard fact: 
\begin{lemma} \label{lem:EhresmannChart}
The space $C^\infty([0,1],W,\SD)$ is a Fr\'echet manifold locally modelled on $C^\infty([0,1],\R^l) \times \R^{m-l}$. 
\end{lemma}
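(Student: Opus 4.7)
The plan is to build an explicit Fr\'echet chart around each $\gamma_0 \in C^\infty([0,1],W,\SD)$ via horizontal lifting inside an Ehresmann chart adapted to $\gamma_0$. First I would apply Proposition \ref{prop:EhresmannChartCurve} with $I = [0,1]$, producing an immersion $\phi: U \to W$ with $U$ open in $[0,1] \times \R^{m-1}$, together with a tautological lift $\widetilde{\gamma_0}: [0,1] \to U$ of $\gamma_0$, such that $\phi^*\SD$ is graphical over $[0,1] \times \R^{l-1}$. Let $\pi: U \to [0,1] \times \R^{l-1}$ denote the Ehresmann projection and split the $\R^{m-1}$ factor as $\R^{l-1} \times \R^{m-l}$.

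Next I would exploit graphicality to express $\phi^*\SD$-horizontal curves as solutions of a control-form ODE. Writing coordinates $(s, u, v) \in \R \times \R^{l-1} \times \R^{m-l}$ on $[0,1] \times \R^{m-1}$, the condition $\widetilde\gamma'(t) \in \phi^*\SD$ determines the fibre derivative through a smooth map $H$, namely $v'(t) = H(v(t), s(t), u(t), s'(t), u'(t))$; these are precisely the Ehresmann horizontal lifts. I would then define
\[ \Psi: \SU \times \R^{m-l} \longrightarrow C^\infty([0,1], U, \phi^*\SD), \qquad \Psi(s, u, v_0) = (s, u, v), \]
where $\SU \subset C^\infty([0,1], \R^l) = C^\infty([0,1], \R) \times C^\infty([0,1], \R^{l-1})$ is the open set of controls lying in $\pi(U)$ and close to $\pi \circ \widetilde{\gamma_0}$, and $v$ is the unique solution of the ODE with initial condition $v_0$. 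By construction $\Psi$ is a bijection onto an open neighbourhood of $\widetilde{\gamma_0}$ in the space of $\phi^*\SD$-horizontal curves; post-composing with $\phi$ yields the desired chart of $C^\infty([0,1], W, \SD)$ around $\gamma_0$, modelled on $C^\infty([0,1], \R^l) \times \R^{m-l}$.

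Two verifications then remain. The inverse $\Psi^{-1}: (s, u, v) \mapsto ((s, u), v(0))$ is visibly continuous and linear in the $C^\infty$-topology, hence Fr\'echet-smooth. For $\Psi$ itself, smoothness reduces to the smooth dependence of ODE solutions on parameters and initial data: differentiating $v' = H(v, s, u, s', u')$ iteratively in $t$ expresses each $C^k$-seminorm of $v$ in terms of finitely many $C^{k+1}$-seminorms of $(s, u)$, which yields continuity of every G\^ateaux derivative of $\Psi$ in the Fr\'echet topology. Transition maps between two such charts factor as horizontal lifting composed with the smooth transition $\phi_2^{-1} \circ \phi_1$ between adapted Ehresmann charts, so their smoothness follows from the same ODE argument. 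The main obstacle is purely bookkeeping: making the Fr\'echet-smoothness of $\Psi$ fully rigorous, since $C^\infty([0,1], \R^l)$ is not a Banach space. Rather than rederive the estimates, I would cite the standard theorem on smooth dependence of ODE flows in the convenient calculus of smooth maps between Fr\'echet spaces.
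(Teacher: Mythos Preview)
Your approach is essentially the same as the paper's: build a chart around $\gamma_0$ by projecting via the adapted Ehresmann chart to $C^\infty([0,1],\R^l)$, record the vertical component of the initial point in $\R^{m-l}$, and invert via horizontal lifting (ODE integration). The only difference is emphasis: the paper is content to check that this map is a local homeomorphism and then \emph{transfer} the Fr\'echet structure from the model, explicitly declining to analyse transition functions (``for the purposes of this paper it is not important to look at the transition functions between charts of this form''), whereas you go further and sketch Fr\'echet-smoothness of $\Psi$, $\Psi^{-1}$, and the transitions. Your extra care is correct but not required for the paper's applications.
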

\begin{proof}
Let $\gamma \in C^\infty([0,1],W,\SD)$ and fix an Ehresmann chart $U$ adapted to it. We claim that the map
\begin{align*}
C^\infty([0,1],U,\SD) \quad\rightarrow\quad & C^\infty([0,1],\R^l) \times \R^{m-l}, \\
\nu \quad \to \quad & (\pi\circ \nu; \nu_{l+1}(0),\cdots,\nu_m(0)),
\end{align*}
which sends an horizontal curve to its projection and to the vertical component of its initial point $\nu(0)$, is a local homeomorphism close to $\gamma$.

Indeed, its inverse is constructed as follows: Fix $\widetilde\nu \in C^\infty([0,1],\R^l)$ contained in a neighbourhood of $\pi \circ \gamma$ and $q$ a lift of $\widetilde\nu(0)$ close to $\gamma(0)$. We use $d\pi$ to lift $\widetilde\nu'$ to a vector field defined over the slice $\pi^{-1}(\widetilde\nu)$ and tangent to $\SD$. By existence and uniqueness of ODEs, we can integrate this lifted vector field to obtain an horizontal curve $\nu$ that projects to $\widetilde\nu$ and has $q$ as basepoint. That this curve exists for all times in $[0,1]$ follows because $\widetilde\nu$ is sufficiently close to $\pi\circ\gamma$ in $\C^\infty$.

We can then transfer the Fr\'echet structure from $C^\infty([0,1],\R^l) \times \R^{m-l}$ to $C^\infty([0,1],W,\SD)$. For the purposes of this paper it is not important to look at the transition functions between charts of this form.
\end{proof}

\begin{definition} \label{def:EhresmannMap}
The \textbf{Ehresmann lifting map for curves} (associated to an Ehresmann chart) is the (partially defined) mapping
\[ E: C^\infty([0,1],\R^l) \times \R^{m-l} \rightarrow C^\infty([0,1],W,\SD) \]
defined in the previous proof.
\end{definition}
We will only use $E$ at the level of germs, so we do not need to be precise about its domain (which is an open in the vector space we wrote).

\subsubsection{Reduced endpoint maps} \label{sssec:reducedEndpoint}

Recall the endpoint map from Definition \ref{def:endpoint}. From Lemma \ref{lem:EhresmannChart} it is immediate that one can deform any horizontal curve in the direction of $\SD$. Analogously, in an Ehresmann chart $\phi: U \to (W,\SD)$, the projection to $\R^l$ of our curves can be freely manipulated.

As such, it is only in the vertical direction $\R^{m-l} \cong TW/\SD$ that the derivative of the endpoint map may fail to be surjective. For this reason we define:
\begin{definition} \label{def:reducedEndpoint}
Let $\phi: U \to (W,\SD)$ be an Ehresmann chart (potentially adapted to a curve). Then, the \textbf{reduced endpoint map} is the map:
\begin{align*}
\Endpoint_\phi: C^\infty([0,1],U,\phi^*\SD) \quad\rightarrow\quad & \R^{m-l} \\
\gamma \quad\rightarrow\quad & \pi_{m-l} \circ \gamma(1),
\end{align*}
where $\pi_{m-l}: U \to \{0\} \times \R^{m-l}$ is the projection along $\R^l$.
\end{definition}

\subsection{Variations of curves} \label{ssec:variations}

We now use the Fr\'echet model above (Lemma \ref{lem:EhresmannChart}) to describe the tangent spaces of $C^\infty([0,1],W,\SD)$. Let us recall:
\begin{definition}
A \textbf{smooth variation of an horizontal curve} $\gamma$ is a smooth map 
\[ \Gamma: [0,1] \times [0, \epsilon] \rightarrow  C^\infty([0,1],W,\SD) \qquad \textrm{s.t.}\]
\begin{itemize}
\item $\gamma_v := \Gamma|_{[0,1] \times \{v\}}$ is an horizontal curve for all $v$,
\item $\gamma_0 = \gamma$.
\end{itemize}
We say that $\frac{\partial\Gamma}{\partial v}(t,0)$ is a \textbf{variational vector field} along $\gamma$. 
\end{definition}

The tangent space of $C^\infty([0,1],W,\SD)$ at $\gamma$ is precisely the set of $\frac{\partial\Gamma}{\partial v}(t,0)$, with $\Gamma$ a variation of $\gamma$. This description of the tangent space is not explicit enough for our purposes. So instead we consider:
\begin{lemma} \label{lem:EhresmannLift}
The space of variational vector fields along $\gamma \in C^\infty([0,1],U,\SD)$ is the image of the linear mapping:
\[ d_{\pi \circ \gamma}E: C^\infty([0,1],\R^l) \oplus \R^{m-l} \rightarrow T_\gamma C^\infty([0,1],W,\SD) \subset C^\infty([0,1],\gamma^*TW). \]
Identically, in order to produce a variation of $\gamma$, we can use the following diagram:
\begin{equation*}
C^\infty([0,1],\R^l) \oplus \R^{m-l} \xrightarrow{I} C^\infty([0,1],\R^l) \times \R^{m-l} \xrightarrow{E} C^\infty([0,1],W,\SD),
\end{equation*}
where $I$ denotes integrating a variation in $\R^l$ and $E$ is the Ehresmann lifting map.
\end{lemma}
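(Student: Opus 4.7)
The plan is to deduce the lemma directly by differentiating the Fréchet chart $E$ constructed in Lemma \ref{lem:EhresmannChart}. The key step is to upgrade $E$ from a local homeomorphism to a smooth local diffeomorphism; once this is in place, the description of variational vector fields becomes essentially tautological.

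The one substantive step is smoothness of $E$ in the Fréchet-manifold sense. Recall that $E$ sends a pair $(\widetilde\nu, v_0) \in C^\infty([0,1],\R^l) \times \R^{m-l}$ to the horizontal curve obtained by integrating, along the slice $\pi^{-1}(\widetilde\nu)$, the unique $\SD$-tangent lift of $\widetilde\nu'$ with initial vertical coordinates $v_0$. Both the lifted vector field (viewed as a functional of $\widetilde\nu$) and the initial condition depend smoothly on $(\widetilde\nu, v_0)$. By the standard smooth dependence of ODE solutions on parameters, applied uniformly across each finite regularity class $C^k$ so as to assemble in the Fréchet category, the resulting time-$t$ flow is smooth in the input data, and hence so is $E$.

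Once $E$ is known to be a smooth diffeomorphism of a neighbourhood of $(\pi\circ\gamma, v_0)$ onto a neighbourhood of $\gamma$, its linearisation $d_{\pi\circ\gamma}E$ is a topological linear isomorphism from $C^\infty([0,1],\R^l) \oplus \R^{m-l}$ onto $T_\gamma C^\infty([0,1],W,\SD)$. Since variational vector fields along $\gamma$ are, by definition, precisely the elements of this tangent space, the image of $d_{\pi\circ\gamma}E$ coincides with the space of variations, giving the first claim.

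For the diagrammatic reformulation, I would interpret $I$ as the affine one-parameter family $I(\eta, w)(s) := (\pi\circ\gamma + s\eta,\, v_0 + sw)$, which lives in the model space $C^\infty([0,1],\R^l) \times \R^{m-l}$ for small $s$. Applying $E$ pointwise in $s$ yields a genuine horizontal variation $\gamma_s := E(I(\eta,w)(s))$ of $\gamma$, and the chain rule identifies its variational vector field at $s=0$ with $d_{\pi\circ\gamma}E(\eta, w)$. This realises every element in the image of $d_{\pi\circ\gamma}E$ via an explicit variation, completing the factorisation in the statement. The only real obstacle is the Fréchet-smoothness of $E$ mentioned above; everything else is a formal consequence of having a chart.
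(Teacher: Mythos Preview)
Your argument is correct and follows essentially the same route as the paper: build the variation in the model space $C^\infty([0,1],\R^l)\times\R^{m-l}$ via the affine family $(\pi\circ\gamma + sV,\, v_0+sh)$, lift through $E$, and differentiate at $s=0$ to identify the resulting variational field with $d_{\pi\circ\gamma}E(V,h)$. The only difference is one of emphasis: you front-load a general Fr\'echet-smoothness statement for $E$ and then read off the result from the chain rule, whereas the paper works directly with the explicit one-parameter family and only needs smooth parameter-dependence of ODE solutions in that single finite-dimensional direction---a strictly weaker input than full Fr\'echet smoothness, though your stronger claim is also true.
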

\begin{proof}
Let us prove both claims at once: Fix a variational vector field $V$ of $\pi \circ \gamma \subset \R^l$ and a vertical displacement $h \in \R^{m-l}$. The map $I$ takes $V$ and integrates it to an actual variation $\widetilde\Gamma_v := \pi(\gamma) + vV$ (this is canonical in the given Euclidean structure). Lifting the family $\widetilde\Gamma_s$ using the Ehresmann map $E$, with given basepoint $vh$, yields a variation $\Gamma_v := E \circ \widetilde\Gamma_v$ of $\gamma$. We then derive it with respect to $v$ to yield its variational vector field. This is, by construction, $d_{\pi \circ \gamma}E(V,h)$.
\end{proof}

\subsection{Jets} \label{ssec:EhresmannJets}

Our goal in this Subsection is to define the Ehresmann lifting map on the level of jets.
\begin{definition} \label{def:integralJets}
An $r$-jet in $J^r(W)$ is \textbf{horizontal} if any representative has a tangency of multiplicity $r$ with $\SD$. We let $J^r(W,\SD) \subset J^r(W)$ denote the space of horizontal $r$-jets.
\end{definition}
We remark that this definition does not depend on the actual representative chosen. Additionally, if an $r$-jet is horizontal with respect to $\SD$ at $q$, it is horizontal with respect to any other distribution having the same $r$-jet as $\SD$ at $q$.

\begin{lemma} \label{lem:integralJets}
$J^r_q(W,\SD)$ is an algebraic subvariety of $J^r_q(W)$.
\end{lemma}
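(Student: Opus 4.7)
The plan is to fix local coordinates near $q$ and exhibit $J^r_q(W,\SD)$ as the common zero locus of explicit polynomial equations on $J^r_q(W) \cong (\R^m)^r$. First I would shrink to a coordinate chart $U \subset \R^m$ around $q$ and pick $1$-forms $\alpha_1,\dots,\alpha_{m-l}$ on $U$ whose common kernel is $\SD$; these exist because $\SD$ is a smooth subbundle of rank $l$. In such coordinates, every $r$-jet at $q$ has a canonical polynomial representative
\[
\gamma(t) \;=\; q + tL_1 + \tfrac{t^2}{2}L_2 + \cdots + \tfrac{t^r}{r!}L_r,
\]
where $(L_1,\dots,L_r) \in (\R^m)^r$ are the successive derivatives at $0$ and serve as coordinates on $J^r_q(W)$.

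Next I would translate horizontality-to-order-$r$ into a system of equations. According to Definition \ref{def:integralJets}, the $r$-jet lies in $J^r_q(W,\SD)$ precisely when $\alpha_j(\gamma'(t)) = O(t^r)$ for every $j$, i.e.\ when the Taylor coefficients at $t=0$ of order $0, 1, \dots, r-1$ of $\alpha_j(\gamma'(t))$ all vanish. Taylor-expanding the coefficient functions of $\alpha_j$ about $q$ and substituting $\gamma(t)$, one sees that the coefficient of $t^k$ in $\alpha_j(\gamma'(t))$ is a polynomial in $(L_1,\dots,L_{k+1})$ whose constants are determined by the $k$-jet of $\alpha_j$ at $q$. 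For $k \le r-1$ this is a polynomial in $(L_1,\dots,L_r)$, so the vanishing conditions are a finite system of polynomial equations on $J^r_q(W) \cong (\R^m)^r$, and $J^r_q(W,\SD)$ is their common zero locus.

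One sanity check to record along the way: since each coefficient of $t^k$ for $k \le r-1$ involves only $L_1,\dots,L_{k+1}$ with $k+1 \le r$, the vanishing conditions genuinely depend only on the $r$-jet, which confirms that Definition \ref{def:integralJets} is independent of the choice of representative extending $(L_1,\dots,L_r)$. Finally, the polynomials themselves depend on the Taylor expansion of the $\alpha_j$ at the specific basepoint $q$, but this is harmless here because $q$ is fixed; the result is an algebraic subvariety in the coordinate structure on $J^r_q(W)$ induced by the chart. There is no serious obstacle in this argument — it is essentially a routine Taylor-series unpacking — the only points to keep honest are the correct interpretation of ``tangency of multiplicity $r$'' and the jet-theoretic independence of representatives.
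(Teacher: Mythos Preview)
Your argument is correct and follows essentially the same approach as the paper: both choose a local coframing $\alpha_1,\dots,\alpha_{m-l}$ of the annihilator of $\SD$, pull back by the jet of a curve, and observe that the resulting Taylor coefficients are polynomial in the jet coordinates, so that $J^r_q(W,\SD)$ is the zero locus of an algebraic map. The only cosmetic difference is that the paper packages this as a single polynomial map $J^r_q(W) \to ((T^*\R)^{(r-1)}_0)^{m-l}$ obtained by pairing with the $(r-1)$-jets $j^{r-1}_q\alpha_i$, whereas you write out the Taylor expansion explicitly in the coordinates $(L_1,\dots,L_r)$; the content is identical.
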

\begin{proof}
Given an $(r-1)$-jet of $1$-form $j^{r-1}\alpha \in (T^*W)^{(r-1)}_q$ and an $r$-jet of curve $j^r\gamma \in J^r_q(W)$, we can pull back $j^{r-1}\alpha$ by $j^r\gamma$. This provides a map:
\[ (T^*W)^{(r-1)}_q \times J^r_q(W) \quad\to\quad (T^*\R)^{(r-1)}_0 \]
that is polynomial in both entries.

We may now choose, locally around $q$, a coframing $\{\alpha_i\}_{i=1,\cdots,m-l}$ of the annihilator of $\SD$. Using the map above, we can pair $r$-jets of curves passing through $q$ with each $j^{r-1}_q\alpha_i$, yielding an algebraic map:
\[ J^r_q(W) \quad\to\quad (T^*\R)^{(r-1)}_0 \times \cdots \times (T^*\R)^{(r-1)}_0 \]
whose zeroes are precisely the subvariety $J^r_q(W,\SD)$.
\end{proof}

If $\gamma$ is tangent to $\SD$ with $\gamma(0) = q$, its $r$-jet $j^r\gamma$ will belong to $J^r_q(W,\SD)$. The proof of the following lemma shows the converse: Any horizontal $r$-jet can be realised as the $r$-jet of an horizontal curve.
\begin{lemma} \label{lem:EhresmannLiftJets}
$J^r_q(W,\SD)$ is a smooth algebraic subvariety that can be parametrised by the algebraic map:
\[ j^r_qE: J^r_{\pi(q)}(\R^l) \to J^r_q(W,\SD) \subset J^r_q(W). \]
\end{lemma}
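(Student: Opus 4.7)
The plan is to show that, in Ehresmann coordinates, the horizontality condition to order $r$ determines the vertical Taylor coefficients of any $r$-jet as polynomial functions of the horizontal ones, so that $J^r_q(W,\SD)$ appears as the graph of an algebraic map over $J^r_{\pi(q)}(\R^l)$, parametrised precisely by $j^r_q E$.

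First I would verify that the Ehresmann lifting map of Definition \ref{def:EhresmannMap} descends to jets as an algebraic map. Writing Ehresmann coordinates $(y,z)\in\R^l\times\R^{m-l}$ with $q=(y_0,z_0)$, the horizontal lift of a curve $\widetilde\nu$ in $\R^l$ based at $q$ satisfies $y(t)=\widetilde\nu(t)$ and $z'(t)=A(y(t),z(t))\,\widetilde\nu'(t)$, where the matrix $A$ encodes the connection $\SD$. Differentiating this ODE $k-1$ times at $t=0$ expresses, via the Leibniz and Fa\`a di Bruno rules, $z^{(k)}(0)$ as a polynomial in $\{y^{(i)}(0)\}_{i\le k}$, in the lower-order coefficients $\{z^{(i)}(0)\}_{i<k}$, and in the jet of $A$ at $q$. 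Iterating in $k\le r$ writes the full $r$-jet of $z$ as a polynomial in the $r$-jet of $\widetilde\nu$ and in $q$, defining the algebraic map $j^r_q E$, whose image lies in $J^r_q(W,\SD)$ since $E(\widetilde\nu,q)$ is a genuine horizontal curve.

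Next I would prove that $j^r_q E$ is an algebraic bijection onto $J^r_q(W,\SD)$. Injectivity is automatic because composing with $\pi$ at the jet level recovers $\widetilde\nu$. For surjectivity, pick $\sigma\in J^r_q(W,\SD)$ with any representative $\gamma$, set $\tau:=j^r_{\pi(q)}(\pi\circ\gamma)$, and compare $\sigma$ to $\sigma':=j^r_q E(\tau)$. Both jets have basepoint $q$, both project to $\tau$, and both are horizontal to order $r$. Pairing the $r$-jet horizontality condition against a local coframing of $\Ann(\SD)$ yields exactly the same recursive equations for the vertical Taylor coefficients as those derived above from the ODE, so $\sigma=\sigma'$. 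The same argument exhibits $J^r_q(W,\SD)$ as the graph inside $J^r_q(W)\cong J^r_{\pi(q)}(\R^l)\times J^r_0(\R^{m-l})$ of a polynomial map, hence as a smooth algebraic subvariety of dimension $lr$, with $j^r_q E$ an algebraic isomorphism onto it.

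The main obstacle is purely combinatorial: in the iterated differentiation of $z'=A(y,z)y'$ one has to check that $z^{(k)}(0)$ first appears on the right-hand side at step $k$, and only linearly through $A(q)$, so that the system is lower triangular in the $z$-coefficients. This follows since $A(y,z)y'$ contains no $z^{(k)}$ term before differentiating and each time derivative raises the differential order of $z$ by one. Triangularity lets one invert the recursion, and matching against the coframing relations simultaneously identifies $\sigma$ with $\sigma'$ and gives the graph structure.
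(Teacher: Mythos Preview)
Your proposal is correct and follows essentially the same approach as the paper: both set up the ODE $z'=A(y,z)y'$ in Ehresmann coordinates, solve it formally by observing that the $k$th vertical Taylor coefficient is a polynomial in the horizontal coefficients of order $\le k$ and the vertical ones of order $<k$, and conclude that $J^r_q(W,\SD)$ is the graph of this polynomial map over $J^r_{\pi(q)}(\R^l)$. Your surjectivity argument (comparing $\sigma$ to $\sigma'=j^r_qE(\tau)$ via the recursive equations) is slightly more explicit than the paper's, which simply invokes that $E$ is already a bijection on germs and then passes to jets, but the content is the same.
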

\begin{proof}
We use the lifting Lemma \ref{lem:EhresmannLift}. Given any $r$-jet $\sigma$ of curve in $\R^l$ we can find an actual representative germ and apply to it the map $E$. This yields a horizontal germ whose $r$-jet is, by definition, $j^r_qE(\sigma)$. This shows that $j^r_qE$ is a homeomorphism with its image.

In order to show that the map is algebraic, we now explain how $j^r_qE(\sigma)$ can be constructed inductively by solving for its coefficients (in terms of the coefficients of $\sigma$). This boils down to the usual method of solving an ODE formally.

In the coordinates provided by the Ehresmann chart $U \to \R^m$, $\SD$ is graphical over $\R^l$. We write $(x_1,\cdots,x_l)$ for the coordinates in $\R^l$ and $(y_1,\cdots,y_{m-l})$ for the coordinates in the complement $0 \times \R^{m-l}$. Graphicality tells us that $\SD$ can be defined as the kernel of $(m-l)$ $1$-forms:
\[ \alpha_i := dy_i - \sum f_j^i(x,y) dx_j. \]

Given $\gamma: I \to \R^l$ representing $\sigma$, these $1$-forms produce a system of ODEs upon restriction to $\gamma$:
\begin{align} \label{eq:ODEs}
x'(t)   \quad=\quad &\gamma'(t) \nonumber\\ 
y_i'(t) \quad=\quad & f_j^i(x(t),y(t))\gamma_j'(t),
\end{align}
whose unique solution with basepoint $q$ is the lift $E \circ \gamma$.

Such a system of ODEs can be solved coefficient by coefficient by expanding $\gamma$, $f$ and $y$ as power series. If we are given the $r$-jet of $\gamma$, the $r$-order Taylor series of $x(t)$ is given as a datum and we must solve for the coefficients of $y(t)$ up to order $r$. The second equation says that the $i$th coefficient of $y(t)$ (which is, up to a constant, the coefficient of the term of order $(i-1)$ on the left-hand side) must be some polynomial combination of coefficients in the $i$th jet of $x(t)$ and the $(i-1)$th jet of $y(t)$. As such, it can be solved for.

Therefore, the algebraic variety $J^r_q(W,\SD) \cong J^r_q(\R^m,\SD)$ is graphical over the affine subspace $J^r_{\pi(q)}(\R^l) \subset J^r_q(\R^m)$. In particular, being a graph, it is smooth.
\end{proof}

\begin{corollary} \label{cor:dimensionIntegralJets}
$J^r_q(W,\SD)$ is a smooth algebraic variety of dimension $lr$, where $l$ is the rank of $\SD$.
\end{corollary}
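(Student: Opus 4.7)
The claim is essentially a dimension count that piggybacks on Lemma \ref{lem:EhresmannLiftJets}. The plan is as follows.

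First, I would invoke Lemma \ref{lem:EhresmannLiftJets} directly: it already tells us that $J^r_q(W,\SD)$ is smooth (it arises as a graph over the affine subspace $J^r_{\pi(q)}(\R^l) \subset J^r_q(\R^m)$) and algebraic (since the graphing functions obtained by solving the ODE system \eqref{eq:ODEs} order by order are polynomial in the coefficients of the $r$-jet in $\R^l$). Moreover, the map $j^r_qE : J^r_{\pi(q)}(\R^l) \to J^r_q(W,\SD)$ is an algebraic bijection, hence a smooth algebraic isomorphism.

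Given this, it remains only to compute $\dim J^r_{\pi(q)}(\R^l)$. An $r$-jet at $0$ of a curve $\Op(0) \to \R^l$ with fixed basepoint $\pi(q)$ is determined by its Taylor coefficients of orders $1, 2, \dots, r$, each an element of $\R^l$, so its dimension is $lr$. Transporting this count along the algebraic isomorphism $j^r_qE$ yields $\dim J^r_q(W,\SD) = lr$.

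There is no real obstacle: all technical content has been absorbed into the preceding Lemma \ref{lem:EhresmannLiftJets}. The only small point worth verifying is that the graphicality of $J^r_q(\R^m,\SD)$ over $J^r_{\pi(q)}(\R^l)$ holds at every order up to $r$ (so that there are no hidden algebraic relations that could lower the dimension). This is exactly the inductive solvability of the system \eqref{eq:ODEs}: at each order $i \le r$, the coefficient of $y(t)$ is uniquely determined as a polynomial in the already-chosen coefficients of $x(t)$ and the lower-order coefficients of $y(t)$, so the fibres of the projection $J^r_q(\R^m,\SD) \to J^r_{\pi(q)}(\R^l)$ are singletons and the dimensions agree.
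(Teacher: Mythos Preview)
Your proposal is correct and follows essentially the same approach as the paper's proof: invoke Lemma \ref{lem:EhresmannLiftJets} to identify $J^r_q(W,\SD)$ with $J^r_{\pi(q)}(\R^l)$ via the parametrisation $j^r_qE$, and then count the dimension of the latter as $lr$. The paper's version is simply more terse, stating this in two sentences without re-deriving the graphicality or the dimension count.
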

\begin{proof}
As seen in the proof of Lemma \ref{lem:EhresmannLiftJets}, $J^r_q(\R^m,\SD)$ can be parametrised by $J^r_{\pi(q)}(\R^l)$. The latter has the claimed dimension.
\end{proof}

\begin{definition} 
The map $j^r_qE$ defined in Lemma \ref{lem:EhresmannLiftJets} is the \textbf{Ehresmann lifting map for jets} at the point $q \in W$.
\end{definition}
We note that the left-inverse of $j^r_pE$ is the obvious (local) projection
\[ j^r_q\pi: J^r_q(W) \to J^r_{\pi(q)}(\R^l), \]
which is also algebraic.

Recall Subsections \ref{sssec:rho} and \ref{sssec:eta}:
\begin{lemma} \label{lem:reparametrisationJets}
The Ehresmann map $j^r_qE$ is $\rho$-equivariant.

In particular, it defines a smooth algebraic map between weighted projective spaces:
\[ \P j^r_qE: \P J^r_{\pi(q)}(\R^l) \to \P J^r_q(W), \]
that is a homeomorphism with its image $\P J^r_q(W,\SD)$.
\end{lemma}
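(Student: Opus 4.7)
The plan is to verify $\rho$-equivariance directly from the ODE construction of $E$ used in Lemma \ref{lem:EhresmannLiftJets}, and then to deduce the statement about weighted projective spaces formally. Since Lemma \ref{lem:EhresmannLiftJets} already establishes that $j^r_q E$ is an algebraic homeomorphism from $J^r_{\pi(q)}(\R^l)$ onto $J^r_q(W,\SD)$, descent to the projectivisations is essentially bookkeeping once equivariance is in hand.

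For equivariance, recall that $\rho(a,\cdot)$ on an $r$-jet is induced by precomposing any representative curve with the dilation $t\mapsto at$; this is consistent with the coordinate formula $\rho(a,(q,L_1,\dots,L_r)) = (q,aL_1,\dots,a^rL_r)$, since Taylor coefficients pick up exactly these weights under reparametrisation. Pick $\sigma\in J^r_{\pi(q)}(\R^l)$ with a representative $\widetilde\gamma$, so that $\rho(a,\sigma)$ is represented by $\widetilde\gamma_a(t):=\widetilde\gamma(at)$. Let $\gamma := E(\widetilde\gamma)$ and $\gamma_a := E(\widetilde\gamma_a)$. Both $\gamma$ and $\gamma_a$ solve the system (\ref{eq:ODEs}) with initial condition $q$, driven by $\widetilde\gamma$ and $\widetilde\gamma_a$ respectively. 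A direct substitution shows that $t\mapsto \gamma(at)$ satisfies the same ODE as $\gamma_a$ with the same initial data, so by uniqueness they coincide. Taking $r$-jets at $0$:
\[
j^r_q E(\rho(a,\sigma)) \;=\; j^r\gamma_a \;=\; j^r\bigl(\gamma(a\,\cdot)\bigr) \;=\; \rho\bigl(a,\,j^r_q E(\sigma)\bigr),
\]
which is the claimed equivariance.

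For the descent, note that the unique $\rho$-fixed point in $J^r_{\pi(q)}(\R^l)$ is the jet of the constant curve at $\pi(q)$, and its Ehresmann lift is the constant curve at $q$, whose $r$-jet is the $\rho$-fixed point of $J^r_q(W)$. Consequently $j^r_q E$ restricts to a map between the complements of the fixed points and, by equivariance, descends to a well-defined map $\P j^r_q E\colon \P J^r_{\pi(q)}(\R^l)\to \P J^r_q(W)$ with image $\P J^r_q(W,\SD)$. Smoothness and algebraicity follow from Lemma \ref{lem:EhresmannLiftJets}, which expresses $j^r_q E$ as a polynomial map in the jet coordinates, together with the fact that the weighted $\R^*$-quotient is an algebraic construction (and a smooth construction on the free locus, which suffices since both sides use the same weights). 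Finally, an equivariant homeomorphism between $\R^*$-spaces sending fixed point to fixed point automatically descends to a homeomorphism between the corresponding orbit spaces, so $\P j^r_q E$ is a homeomorphism onto $\P J^r_q(W,\SD)$.

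There is no deep obstacle here: the statement is a formal consequence of Lemma \ref{lem:EhresmannLiftJets} plus a compatibility between reparametrisation of the base curve and of its Ehresmann lift. The only point requiring care is fixing the convention that the $\rho$-action is induced by $t\mapsto at$, after which equivariance of $E$ is the elementary observation that scaling time in the input of an autonomous control system scales time in the solution by the same factor.
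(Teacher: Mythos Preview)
Your proof is correct and follows essentially the same approach as the paper: choose a representative germ, observe that reparametrisation by $t\mapsto at$ commutes with solving the autonomous ODE defining the Ehresmann lift, and take jets. The paper leaves the descent to weighted projectivisations implicit in the phrase ``in particular'', whereas you spell it out; this added care is fine but not required.
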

\begin{proof}
For an $r$-jet of curve $\sigma$, choose a representative germ $\gamma$. The curve $\rho(a,\gamma)(t) := \gamma(at)$ is a representative of the reparameterisation $\rho(a,\sigma)$. Since the Ehresmann map is given by solving an ODE, the image of $\rho(a,\gamma)$ under the Ehresmann map is $E(\rho(a,\gamma)) = \rho(a,E(\gamma))$. The claim follows by taking jets.
\end{proof}

\subsection{The parametric Ehresmann map on jets} \label{ssec:analyticEhresmann}

If $\SD$ is smooth, the Ehresmann maps $j^r_qE$ vary smoothly with $q \in U$. As such, the same is true for the varieties $J^r_q(W,\SD)$. When $\SD$ is analytic, we have some additional structure that we will exploit later:
\begin{proposition} \label{prop:EhresmannAnalytic}
Assume $(W,\SD)$ is analytic. Then, the subspace of horizontal jets $J^r(W,\SD) \subset J^r(W)$ is a smooth analytic subvariety.

It may be (locally) parametrised as follows: Fix an analytic Ehresmann chart $U \subset W$. Then, the Ehresmann maps $j^r_qE$, for $q$ varying in $U$, can be assembled to yield an analytic embedding which is a diffeomorphism with its image:
\[ j^rE: \textrm{domain}(j^rE) \subset J^r(\R^l) \times \R^{m-l} \to J^r(W,\SD). \]
\end{proposition}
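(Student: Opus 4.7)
The plan is to upgrade the fibrewise parametrisation of Lemma \ref{lem:EhresmannLiftJets} to a parametric one, exploiting the real-analytic dependence of the ODEs in that proof on the basepoint. Fix an analytic Ehresmann chart $U \subset W$ with coordinates $(x,y) \in \R^l \times \R^{m-l}$ such that $\pi(x,y) = x$. Since $\SD$ is analytic and graphical over $\R^l$, we may write $\SD = \bigcap_{i=1}^{m-l} \ker \alpha_i$ with
\[ \alpha_i \;=\; dy_i - \sum_{j=1}^l f_j^i(x,y)\,dx_j, \]
where the functions $f_j^i$ are real analytic on $U$. Horizontality of a curve $\gamma(t) = (x(t),y(t))$ is equivalent to the ODE system
\[ y_i'(t) \;=\; \sum_{j=1}^l f_j^i(x(t),y(t))\,x_j'(t), \qquad i = 1,\dots,m-l. \]

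Next, I would revisit the order-by-order solution of this system carried out in the proof of Lemma \ref{lem:EhresmannLiftJets}, but now retaining the basepoint $(x_0,y_0) = (x(0),y(0))$ as a free parameter. By induction on $k$, the coefficient of $t^k$ in $y$ is expressed as a polynomial in the coefficients of $t^{\leq k}$ in $x$, in the previously computed coefficients of $t^{<k}$ in $y$, and in the partial derivatives of the $f_j^i$ evaluated at $(x_0,y_0)$. Since the $f_j^i$ are analytic in $(x,y)$, their partial derivatives at $(x_0,y_0)$ are analytic in $(x_0,y_0)$; unwinding the induction, every component of $j^rE(\sigma,y_0)$ is therefore a polynomial in $\sigma \in J^r(\R^l)$ whose coefficients depend analytically on $(x_0,y_0)$. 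This yields analyticity of the assembled map
\[ j^rE: \textnormal{domain}(j^rE) \subset J^r(\R^l) \times \R^{m-l} \;\longrightarrow\; J^r(W), \]
whose pointwise image is precisely $J^r(W,\SD)$ by Lemma \ref{lem:EhresmannLiftJets}.

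To finish, I would exhibit an explicit analytic left inverse: the projection $j^r\pi: J^r(W) \to J^r(\R^l)$ together with the map $\sigma \mapsto \pi_{m-l}(\textnormal{basepoint of }\sigma)$ reading off the vertical component of the basepoint. Both are analytic, and their combined restriction to $J^r(W,\SD)$ recovers $(\sigma,y_0)$ by construction of $j^rE$. Consequently $j^rE$ is an analytic embedding and a homeomorphism onto its image. Finally, to see that $J^r(W,\SD) \subset J^r(W)$ is itself a smooth analytic subvariety (not merely the image of an analytic embedding), I would note that it is cut out by the analytic equations obtained by pulling back $j^{r-1}\alpha_i$ along $r$-jets of curves and equating the result to zero, as in Lemma \ref{lem:integralJets}; smoothness then follows from the local graphical parametrisation just constructed.

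I do not expect a serious obstacle: the content is a parametric refinement of Lemma \ref{lem:EhresmannLiftJets}, and the analytic hypothesis on $\SD$ feeds directly into the order-by-order solution through the coefficients $f_j^i$. The only minor bookkeeping point is that $\textnormal{domain}(j^rE)$ genuinely depends on the chart $U$ (we only lift curves whose $\R^l$-projection stays in $\pi(U)$ and whose basepoint lies in $U$), but this is already absorbed in the statement.
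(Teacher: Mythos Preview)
Your proof is correct and follows essentially the same approach as the paper: both revisit the order-by-order ODE solution from Lemma~\ref{lem:EhresmannLiftJets} with the basepoint varying analytically, and both invoke the defining equations from Lemma~\ref{lem:integralJets} to establish that $J^r(W,\SD)$ is an analytic subvariety. The paper is marginally more explicit about reducing a general analytic coframing to the normal form $\alpha_i = dy_i - \sum_j f_j^i\,dx_j$ (via an invertibility argument for the $dy$-coefficients), whereas you assume this form directly; your assumption is justified by graphicality of $\SD$, so this is only a presentational difference.
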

\begin{proof}
We revisit the proofs of Lemmas \ref{lem:EhresmannLiftJets} and \ref{lem:integralJets}. If $\SD$ is analytic, then we can choose an analytic coframing 
\[ \{\alpha_i := \sum_j g_j^i(x,y)dy_j + \sum_j f_j^i(x,y) dx_j\}_{i=1,\cdots,m-l} \]
for $\SD$. Contraction with the $(r-1)$-jets $j^{r-1}\alpha_i$ produces the defining equations of the subset $J^r(W,\SD)$. The analyticity of $\alpha_i$ implies the analyticity of its $(r-1)$-jet; the first claim follows.

Due to the graphicality of $\SD$ over $\R^l$, after a linear change of coordinates, we may assume that $g_i^i(p) \neq 0$ for all $i$. That implies that the reciprocals $1/g_i^i(x,y)$ are analytic functions defined in a possibly smaller chart. Dividing the $\alpha_i$ by $g_i^i(x,y)$ and then doing suitable combinations allows us to assume that $\SD$ is given by an analytic coframing of the form
\[ \{\widetilde{\alpha_i} := dy_i - \sum_j \widetilde{f}_j^i(x,y) dx_j\}_{i=1,\cdots,m-l}. \]

We now solve for the coefficients of $y(t)$ in Equation \ref{eq:ODEs}. Now the zeroeth order coefficient of $x(t)$ varies, as a point in $\R^l$, and so does the zero order coefficient of $y(t)$, as an element in $\R^{m-l}$. Since the functions $\widetilde{f}_j^i$ are analytic, the claim follows.
\end{proof}

\begin{corollary} \label{cor:dimensionIntegralJets2}
Assume $(W,\SD)$ is analytic. Then $J^r(W,\SD)$ is a smooth analytic variety of dimension $lr + n$, where $l$ is the rank of $\SD$.
\end{corollary}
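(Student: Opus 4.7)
The plan is to reduce the dimension count to the local parametrisation established in Proposition \ref{prop:EhresmannAnalytic}. That proposition already tells us that, in a neighbourhood of any point $q \in W$, the space $J^r(W,\SD)$ is analytically diffeomorphic, via the parametric Ehresmann map $j^rE$, to an open subset of $J^r(\R^l) \times \R^{m-l}$. In particular, smoothness and analyticity of $J^r(W,\SD)$ as an analytic variety follow immediately, and the dimension is locally constant.

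To finish, I would simply add up the dimensions of the two factors on the parameter side. For the first factor, $J^r(\R^l)$ decomposes as the basepoint in $\R^l$ (contributing $l$) together with the $r$ successive derivative data, each valued in $\R^l$ (contributing $lr$ in total); hence $\dim J^r(\R^l) = l + lr$. Alternatively, one can invoke Corollary \ref{cor:dimensionIntegralJets}, which gives $\dim J^r_q(W,\SD) = lr$ fibrewise, and then add the $m$ dimensions of the basepoint $q \in W$. Either route gives
\[ \dim J^r(W,\SD) = (l + lr) + (m-l) = lr + m, \]
matching the claimed formula (with $n=m=\dim W$).

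There is no real obstacle here: the corollary is a bookkeeping consequence of the parametric Ehresmann description. The only mild subtlety is consistency of the two viewpoints, the fibrewise count (Corollary \ref{cor:dimensionIntegralJets}) and the global parametrisation, which is guaranteed by the fact that $j^rE$ is a diffeomorphism onto its image and intertwines the basepoint projections. Since both computations produce the same answer $lr+m$, we conclude.
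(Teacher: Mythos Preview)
Your proposal is correct and follows essentially the same route as the paper: invoke Proposition \ref{prop:EhresmannAnalytic} to exhibit $J^r(W,\SD)$ locally as a graph over $J^r(\R^l)\times\R^{m-l}$, and then obtain the dimension either by the direct count or, as the paper does, by citing the fibrewise result Corollary \ref{cor:dimensionIntegralJets} and adding the $m$ basepoint dimensions. Your identification $n=m=\dim W$ is also correct.
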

\begin{proof}
We can cover $W$ by analytic Ehresmann charts as in the proof of the Proposition. In every such chart, $J^r(W,\SD)$ is an analytic graph over (a subset of) $J^r(\R^l) \times \R^{m-l}$, showing that it is a smooth analytic subvariety of $J^r(W)$. The dimension counting follows from the argument in Corollary \ref{cor:dimensionIntegralJets}. The extra $n$ dimensions arise due to the additional choice of basepoint.
\end{proof}

\section{The symplectic geometry of the annihilator} \label{sec:symplectic}

Now we review Hsu's result Proposition \ref{prop:Hsu}. With this aim in mind, we go over the symplectic/microlocal formalism in the annihilator subbundle 
\[ \pi: Z_1 := \Ann(\xi) \subset T^*M \quad\longrightarrow\quad M. \]
We do not just think of it as a subbundle of the cotangent bundle, but as a submanifold of the exact symplectic manifold $(T^*M,\lambda)$. In certain situations, $Z_1 \setminus M$ will be symplectic as well (for instance, if $\xi$ is contact, $Z_1 \setminus M$ is its symplectisation), but this is most often not the case. Hsu's theorem tells us that this failure is equivalent to the existence of singular curves.

In this Section we assume that $(M,\xi)$ is:
\begin{itemize}
\item regular (to be explained in Subsection \ref{ssec:regularity}),
\item not necessarily bracket-generating,
\item not necessarily analytic.
\end{itemize}
Once we reintroduce the analiticity assumption, we will see that $\xi$ is regular in an open dense set of $M$. This is why it is sufficient for us to restrict to the regular case now (this is justified in detail in Sections \ref{sec:stratification} and \ref{sec:localIntegrability}).

\subsection{Regular distributions} \label{ssec:regularity}

Regularity means that each $\Gamma(\xi)^{(n)}$ is the module of vector fields tangent to some distribution $\xi_n$. We then have a flag of distributions:
\[ \xi = \xi_1 \subset \xi_2 \subset \xi_3 \subset \cdots \subset \xi_{n_0}, \]
and a dual flag
\[ Z_1 \supset Z_2 \supset Z_3 \supset ... \supset Z_{n_0}, \]
consisting of the annihilators 
\[ Z_n := \Ann(\xi_n) = \{\alpha \in T^*M \,|\, \alpha(v) = 0 \textrm{ for every } v \in \xi_n \}. \]

\begin{definition}
The \textbf{curvature} of $\xi_n$ is the skew-symmetric, bilinear map
\begin{align*}
\xi_n \times \xi_n  \quad\longrightarrow\quad& \xi_{n+1}/\xi_n \subset TM/\xi_n \\
X, Y              \quad\longrightarrow\quad& [\widetilde X, \widetilde Y] \quad\textrm{mod}\quad \xi_n ,
\end{align*}
where $X$ and $Y$ are vectors in $\xi_n$ based at the same point, locally extended to tangent vector fields $\widetilde X$ and $\widetilde Y$.
\end{definition}
We leave as an exercise to show that this is well-defined using the Leibniz rule. The curvatures measure the non-integrability of $\xi$. One may dualise this notion, yielding a linear map
\begin{align*}
Z_n = \Ann(\xi_n)			\quad\longrightarrow\quad& \wedge^2 \xi_n^* \\
\alpha	\quad\longrightarrow\quad& d\alpha|_{\xi_n}.
\end{align*}

\begin{lemma} \label{lem:characterisationZ}
The annihilator $Z_{n+1}$ is spanned by those covectors $\alpha \in Z_n$ in the kernel of the dual curvature of $\xi_n$.
\end{lemma}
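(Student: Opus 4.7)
The plan is to establish a pointwise identity of the form $d\alpha|_{\xi_n}(X,Y) = -\alpha(c(X,Y))$, where $c$ denotes the curvature, and then read off both directions from it.

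First I would observe the containment $Z_{n+1} \subset Z_n$, which is immediate from $\xi_n \subset \xi_{n+1}$, and hence that the dual curvature of $\xi_n$ is defined on the larger bundle $Z_n$ and we really only need to characterise its kernel. By the regularity hypothesis, $\xi_n$ is a genuine distribution and $Z_n$ is therefore a smooth subbundle of $T^*M$. In particular, any covector $\alpha \in Z_n$ extends locally to a smooth section $\widetilde\alpha$ of $Z_n$, i.e.\ to a $1$-form that annihilates $\xi_n$ on a neighbourhood.

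Next I would compute $d\widetilde\alpha|_{\xi_n}$ via Cartan's formula. Given $X,Y \in (\xi_n)_q$, extend them to local sections $\widetilde X, \widetilde Y \in \Gamma(\xi_n)$. Since $\widetilde\alpha$ annihilates $\xi_n$, both $\widetilde\alpha(\widetilde X)$ and $\widetilde\alpha(\widetilde Y)$ vanish identically, so
\[
d\widetilde\alpha(\widetilde X,\widetilde Y) \;=\; \widetilde X\cdot\widetilde\alpha(\widetilde Y) - \widetilde Y\cdot\widetilde\alpha(\widetilde X) - \widetilde\alpha([\widetilde X,\widetilde Y]) \;=\; -\widetilde\alpha([\widetilde X,\widetilde Y]).
\]
By definition of $\Gamma(\xi)^{(n+1)}$, the bracket $[\widetilde X,\widetilde Y]$ is a section of $\xi_{n+1}$. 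Since $\alpha$ kills $\xi_n$, the value $\alpha([\widetilde X,\widetilde Y])_q$ depends only on the class $[\widetilde X,\widetilde Y]_q \bmod (\xi_n)_q = c(X,Y)$. This simultaneously shows that $d\alpha|_{\xi_n}$ is independent of the chosen extension $\widetilde\alpha$ (so the dual curvature map $\alpha \mapsto d\alpha|_{\xi_n}$ is well defined on $Z_n$) and yields the key identity
\[
(d\alpha|_{\xi_n})(X,Y) \;=\; -\,\alpha\bigl(c(X,Y)\bigr),
\]
where the right-hand side uses that $\alpha$ descends to $\xi_{n+1}/\xi_n$ along $\xi_n$.

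Finally, I would read off the equivalence. The identity shows that $d\alpha|_{\xi_n} \equiv 0$ if and only if $\alpha$ annihilates the image of $c$, which is exactly $\xi_{n+1}/\xi_n$. Combined with $\alpha \in Z_n = \Ann(\xi_n)$, this is equivalent to $\alpha \in \Ann(\xi_{n+1}) = Z_{n+1}$, giving both containments and hence the claim. The only subtle point is making sure the dual curvature is well defined (i.e.\ that the extension $\widetilde\alpha$ does not matter), but this falls out of the same Cartan computation used to prove the main identity, so no separate argument is needed.
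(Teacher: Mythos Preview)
Your proof is correct and follows essentially the same approach as the paper: both reduce to the Cartan-formula identity $d\alpha(X,Y) = -\alpha([X,Y])$ for $X,Y \in \Gamma(\xi_n)$ and $\alpha \in Z_n$, and then use that such brackets span $\xi_{n+1}$. You have simply spelled out more of the bookkeeping (well-definedness of the dual curvature, choice of extensions) that the paper leaves implicit.
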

\begin{proof}
Let $X$ and $Y$ be local vector fields tangent to $\xi_n$. This tangency condition implies $d\alpha(X,Y) = -\alpha([X,Y])$. Since brackets of the form $[X,Y]$ span $\xi_{n+1}$, the claim follows.
\end{proof}

\subsection{The characteristic distribution is a partial connection}

We are interested in studying:
\begin{definition}
We call $\ker(d\lambda|_{Z_1})$ the \textbf{characteristic distribution} (even though it does not have constant rank).

A vector tangent to $\ker(d\lambda|_{Z_1})$ is said to be \textbf{characteristic}.
\end{definition}
In light of Proposition \ref{prop:Hsu}, we are only interested in characteristic vectors based away from the zero section $M \subset Z_1$.

Fix a ball $U \subset M$, which we may assume is an Ehresmann chart for $\xi$. Each of the bundles $Z_n$ is trivial over $U$. As such, we may assume that we have a coframe $\{\alpha_i\}_{i=1,\cdots,i_1}$ of $Z_1$ such that the $\{\alpha_i\}_{i=1,\cdots,i_n}$ span $Z_n$. Such a coframe provides coordinate functions $(a_i)_{i=1,\cdots,i_1}$ in the fibres of $Z_1$. In these coordinates we have:
\begin{align} \label{eq:Liouville}
\lambda|_{Z_1}  \quad=\quad& \sum_{i=1}^{i_1} a_i \alpha_i \notag\\
d\lambda|_{Z_1} \quad=\quad& \sum_{i=1}^{i_1} da_i \wedge\alpha_i + a_id\alpha_i,
\end{align}
where we are abusing notation and still denoting by $\alpha_i$ its pullback to $Z_1$.

\begin{lemma}\label{lem:lifting}
Let $\pi: Z_1 \to M$ denote the natural projection. Fix a characteristic vector $v \in \ker(d\lambda|_{Z_1})$ based at $(q,\alpha)$. Then:
\begin{itemize}
\item $d\pi(v)$ is tangent to $\xi_q$,
\item $v$ is the unique vector contained in $\ker(d\lambda|_{Z_1})$, based at $(q,\alpha)$, and projecting to $d\pi(v)$.
\end{itemize}
\end{lemma}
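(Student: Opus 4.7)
The plan is to work in the local coordinates provided by the coframe $\{\alpha_i\}_{i=1,\ldots,i_1}$ of $Z_1$ together with the associated fibre coordinates $\{a_i\}$, and to read off both claims from the explicit expression for $d\lambda|_{Z_1}$ in \eqref{eq:Liouville}. The key structural input is that the $\alpha_i$ (viewed as forms on $Z_1$) and hence the $d\alpha_i$ are pullbacks from $M$, so they annihilate any pair of tangent vectors one of which is vertical; only the wedge with the $da_i$ is sensitive to the fibre direction.

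For the first bullet, I would pair the characteristic vector $v$ with an arbitrary vertical vector $w \in \ker d\pi$. Because $\alpha_i(w) = 0$ and $d\alpha_i(v,w) = 0$, the formula \eqref{eq:Liouville} collapses to
\[ 0 \;=\; d\lambda|_{Z_1}(v,w) \;=\; -\sum_{i=1}^{i_1} da_i(w)\,\alpha_i(v). \]
As $w$ sweeps the vertical tangent space and $\{da_i\}$ is a basis of its dual, this forces $\alpha_i(v) = 0$ for every $i$. Since each $\alpha_i$ is pulled back from $M$, we also have $\alpha_i(d\pi(v)) = 0$, and therefore $d\pi(v) \in \bigcap_i \ker\alpha_i = \xi_q$.

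For uniqueness, suppose $v, v' \in \ker(d\lambda|_{Z_1})$ are both based at $(q,\alpha)$ with $d\pi(v) = d\pi(v')$. Then $v - v'$ is vertical and still lies in $\ker(d\lambda|_{Z_1})$ by linearity. Pairing it against an arbitrary $w \in T_{(q,\alpha)}Z_1$ and discarding the terms in which $v - v'$ appears against $\alpha_i$ or $d\alpha_i$ (which vanish as above, using only that $v-v'$ is vertical), what remains is
\[ 0 \;=\; \sum_i da_i(v-v')\,\alpha_i(w) \;=\; \sum_i c_i\,\alpha_i(d\pi(w)), \qquad c_i := da_i(v-v'). \]
Since $\pi$ is a surjective submersion, $d\pi(w)$ ranges over all of $T_qM$, and linear independence of $\alpha_1,\dots,\alpha_{i_1}$ in $T_q^*M$ forces $c_i = 0$ for all $i$. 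Hence $v = v'$. The argument is essentially pure linear algebra, and I do not anticipate any real obstruction; the only care needed is to track which objects come from the base (and therefore kill vertical vectors) versus those that genuinely probe the fibre.
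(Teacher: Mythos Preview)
Your proof is correct and follows essentially the same approach as the paper: both work in the local coordinates from \eqref{eq:Liouville} and exploit that the $\alpha_i$ and $d\alpha_i$ are pulled back from $M$ to separate the vertical and horizontal contributions. The only cosmetic difference is that the paper contracts $v$ into $d\lambda$ and reads off the vanishing of the resulting $1$-form component by component, whereas you evaluate $d\lambda(v,w)$ against test vectors $w$; for uniqueness the paper solves directly for $da_i(v_v)$ while you take the difference $v-v'$ and show a vertical characteristic vector must vanish---these are equivalent linear-algebra phrasings of the same computation.
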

\begin{proof}
Working in the local trivialisation with fibre coordinates $(a_i)_{i=1,\cdots,i_1}$ as above, we write $v = v_v + v_h$, where $v_v$ is vertical (i.e. tangent to the fiber) and $v_h$ is horizontal. Evaluating this expression with $d\lambda|_{Z_1}$ we obtain the following $1$-form in $Z_1$:
\begin{equation} \label{eq:lifting}
0 = \iota_v d\lambda|_{Z_1} = \sum_i -\alpha_i(v_h)da_i + da_i(v_v)\alpha_i  + a_i \iota_{v_h}d\alpha_i.
\end{equation}
The first term consists of vertical $1$-forms (i.e. vanishing on horizontal vectors), whereas the other two are horizontal. If the first term is to vanish, all the coefficients $\alpha_i(v_h) = \alpha_i(d\pi(v))$ must be zero, proving the first claim.

We now inspect the other two terms. It must hold that:
\[ \sum_i da_i(v_v)\alpha_i  + a_i \iota_{d\pi(v)}d\alpha_i = 0. \]
We first note that $\sum_i da_i(v_v)\alpha_i$ is a linear combination of the coframing $(\alpha_i)$ of $Z_1$, with coefficients $da_i(v_v)$. As such, $\beta := \sum_i a_i \iota_{d\pi(v)}d\alpha_i$ must also be a $1$-form in $Z_1$. It follows that the coefficients $da_i(v_v)$ are uniquely determined by $d\pi(v)$, because they must be the coordinates of $-\beta$ in terms of the coframing.
\end{proof}

We think of this statement as follows: $\ker(d\lambda|_{Z_1})$ provides at each point $(q,\alpha) \in Z_1$ a vector subspace of $T_{(q,\alpha)}Z_1$. These subspaces have different ranks and, as such, they may be regarded as a distribution with singularities (do note that this is different from a differential system, which is the dual notion). The Lemma tells us that $\ker_{(q,\alpha)}(d\lambda|_{Z_1})$ is a \emph{lift} of a vector subspace of $\xi_q$ and, as such, it resembles an Ehresmann connection which is defined only for some horizontal directions:
\begin{corollary} \label{cor:characterisationZ}
The characteristic directions $\ker_{(q,\alpha)}(d\lambda|_{Z_1})$ are a lift of $\ker_q(d\alpha|_\xi)$. In particular:
\begin{itemize}
\item $d\lambda|_{Z_1}$ at $(q,\alpha)$ and $d\alpha|_\xi$ at $q$ have the same corank.
\item $\ker_{(q,\alpha)}(d\lambda|_{Z_1})$ lifts $\xi_q$ completely if and only if $\alpha \in Z_2$.
\item There is a bound $\ker(d\lambda|_{Z_1}) \leq \rank(\xi)$. Equality holds along $Z_2$.
\end{itemize}
\end{corollary}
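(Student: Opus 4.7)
The plan is to build directly on Lemma \ref{lem:lifting}, which already provides half of the content: characteristic vectors project into $\xi$, and the vertical component of a characteristic vector is uniquely determined by its projection. So the differential $d\pi$ restricted to $\ker_{(q,\alpha)}(d\lambda|_{Z_1})$ is injective, and to identify the lift it remains only to identify its image inside $\xi_q$.

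To pin down the image, I would pick up Equation \eqref{eq:lifting} again and examine it on a pair of test vectors. Given $v = v_v + v_h$ with $v_h$ tangent to $\xi$ (which is already forced), plug a horizontal lift $u$ of a vector in $\xi_q$ into $\iota_v d\lambda|_{Z_1}$. The terms $da_i(v_v)\alpha_i(u)$ die because $\alpha_i$ annihilates $\xi$, and one is left with
\begin{equation*}
\sum_i a_i\, d\alpha_i(v_h, u) \;=\; d\alpha(v_h, u),
\end{equation*}
where $\alpha = \sum a_i \alpha_i$ is the covector labelling our fiber point. Hence $v_h$ must lie in $\ker(d\alpha|_\xi)$. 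Conversely, given any $v_h \in \ker(d\alpha|_\xi)$, I would produce a characteristic lift: testing $\iota_v d\lambda|_{Z_1}$ against a vector $u$ transverse to $\xi$ gives a linear equation on $v_v$ whose coefficient matrix is $(\alpha_i(u))$, and this is invertible because $(\alpha_i)$ is a coframing of $Z_1 = \Ann(\xi)$. So $v_v$ is uniquely solvable, producing the desired lift. This establishes the displayed identification $d\pi: \ker_{(q,\alpha)}(d\lambda|_{Z_1}) \xrightarrow{\sim} \ker_q(d\alpha|_\xi)$.

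The three bullet items follow immediately. For the first, the isomorphism above identifies kernels, so $d\lambda|_{Z_1}$ at $(q,\alpha)$ and $d\alpha|_\xi$ at $q$ have equal-dimensional kernels, i.e.\ the same corank. For the second, the lift covers all of $\xi_q$ precisely when $\ker(d\alpha|_\xi) = \xi_q$, which is the vanishing of $d\alpha|_\xi$; by Lemma \ref{lem:characterisationZ} this is exactly the condition $\alpha \in Z_2$. For the third, $\ker(d\alpha|_\xi) \subset \xi_q$ forces $\dim \ker(d\lambda|_{Z_1}) \leq \rank(\xi)$, with equality characterized by the preceding bullet along $Z_2$.

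I do not anticipate a real obstacle: the content is essentially a bookkeeping unpacking of Equation \eqref{eq:lifting}, with Lemma \ref{lem:lifting} already supplying the harder direction. The only point demanding mild care is keeping track of which parts of $\iota_v d\lambda|_{Z_1}$ are vertical versus horizontal and testing against appropriate vectors, so that the equation splits cleanly into the constraint $v_h \in \ker(d\alpha|_\xi)$ and the solvable linear system for $v_v$.
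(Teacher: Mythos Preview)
Your proposal is correct and follows the same approach as the paper: the paper's proof is a one-liner (``All the statements follow by putting Lemmas \ref{lem:characterisationZ} and \ref{lem:lifting} together''), and what you have written is precisely the explicit unpacking of that sentence via Equation \eqref{eq:lifting}. Your identification of the image as $\ker(d\alpha|_\xi)$ is exactly the observation, implicit in the proof of Lemma \ref{lem:lifting}, that $\beta = \sum_i a_i \iota_{v_h} d\alpha_i$ must lie in $\Ann(\xi)$, and your converse (solving for $v_v$) makes explicit the existence direction that the paper leaves to the reader.
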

\begin{proof}
All the statements follow by putting Lemmas \ref{lem:characterisationZ} and \ref{lem:lifting} together.
\end{proof}

\subsection{The key remark}

We are ultimately interested in characteristic curves of $Z_1$. As advanced in Subsection \ref{ssec:overview}, we will stratify $Z_1$ using $d\lambda|_{Z_1}$, study the curves tangent to each stratum, and then take closures to account for those curves that move across strata.

Lemma \ref{lem:characterisationZ} and Corollary \ref{cor:characterisationZ} state that, along $Z_2$, every vector in $\xi$ can be lifted to $\ker(d\lambda|_{Z_1})$. This may seem problematic, since we want to prove that characteristic/singular jets form a much smaller set than horizontal ones.

The key Linear Algebra observation is: Even though every vector in $\xi$ can be lifted to $\ker(d\lambda|_{Z_1})$ along $Z_2$, the resulting lift is not necessarily tangent to $Z_2$; this is only the case if the vector is also characteristic for $Z_2$. The same holds for higher annihilators in the flag:
\begin{proposition} \label{prop:liftingHigherZ}
The following identity holds for characteristic vectors:
\[ \ker(d\lambda|_{Z_1}) \cap TZ_n = \ker(d\lambda|_{Z_n}) \cap d\pi^{-1}(\xi). \]
\end{proposition}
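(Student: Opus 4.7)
My plan is to argue pointwise in the local trivialisation set up just before Equation \eqref{eq:Liouville}: a coframe $\{\alpha_i\}_{i=1,\ldots,i_1}$ of $Z_1$ such that $\{\alpha_i\}_{i=1,\ldots,i_n}$ spans $Z_n$, together with the associated fibre coordinates $(a_1,\ldots,a_{i_1})$. In these coordinates, $Z_n \subset Z_1$ is cut out by $a_{i_n+1} = \cdots = a_{i_1} = 0$, and at any point $(q,\alpha) \in Z_n$ a complement to $T_{(q,\alpha)}Z_n$ inside $T_{(q,\alpha)}Z_1$ is spanned by the purely vertical vectors $\partial_{a_j}$ with $j > i_n$.

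The forward inclusion is essentially tautological. If $v \in \ker(d\lambda|_{Z_1}) \cap TZ_n$, then $\iota_v(d\lambda|_{Z_n})$ is the restriction to $TZ_n$ of $\iota_v(d\lambda|_{Z_1}) = 0$, hence zero; and $d\pi(v) \in \xi$ by Lemma \ref{lem:lifting}.

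For the reverse inclusion, fix $v$ with $\iota_v(d\lambda|_{Z_n}) = 0$ and $d\pi(v) \in \xi$; in particular $v \in TZ_n$. I will show that $\iota_v(d\lambda|_{Z_1})$ vanishes on every $w \in T_{(q,\alpha)}Z_1$. Decompose $w = w_n + w_\perp$ with $w_n \in TZ_n$ and $w_\perp$ a linear combination of the $\partial_{a_j}$, $j > i_n$. The pairing against $w_n$ vanishes because $d\lambda|_{Z_n}$ is literally the pullback of $d\lambda|_{Z_1}$ to $TZ_n$. For $w_\perp$ I expand using Equation \eqref{eq:Liouville}: the $da_i \wedge \alpha_i$ terms vanish because $\alpha_i(w_\perp) = 0$ (the $\alpha_i$ are pulled back from $M$ and hence horizontal) and $\alpha_i(v) = 0$ (since $\alpha_i \in \Ann(\xi)$ and $d\pi(v) \in \xi$); the curvature terms $a_i\,d\alpha_i$ vanish either because $a_i(q,\alpha) = 0$ for $i > i_n$, or because $d\alpha_i$ is pulled back from $M$ and therefore annihilates the vertical vector $w_\perp$ when $i \leq i_n$.

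I do not anticipate a serious obstacle; the content is careful bookkeeping of which coordinate terms survive the various vanishings. The one conceptual point worth highlighting is that the directions in $TZ_1$ not already contained in $TZ_n$ at a point of $Z_n$ are purely vertical, and every term in the local expression for $d\lambda|_{Z_1}$ is degenerate against such vectors once $v$ is assumed to project into $\xi$, which is exactly what makes the passage from $Z_n$ to $Z_1$ free.
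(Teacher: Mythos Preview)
Your proof is correct and follows essentially the same route as the paper: both use the local coordinate expression \eqref{eq:Liouville} and the key facts that $\alpha_i(v)=0$ when $d\pi(v)\in\xi$, that $a_i=0$ along $Z_n$ for $i>i_n$, and that the $\alpha_i$, $d\alpha_i$ are horizontal. The only cosmetic difference is that the paper decomposes the $1$-form $\iota_v d\lambda|_{Z_1}$ into its vertical and horizontal components and reduces the latter to the $Z_n$ computation, whereas you decompose the test vector $w$ into its $TZ_n$ and transverse parts and dispatch the $TZ_n$ part directly by hypothesis; the bookkeeping is otherwise identical.
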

\begin{proof}
First we address the inclusion $\subset$. According to Lemma \ref{lem:lifting}, $\ker(d\lambda|_{Z_1})$ is a lift of (part of) $\xi$, showing that it is contained in $d\pi^{-1}(\xi)$. The inclusion $\ker(d\lambda|_{Z_1}) \cap TZ_n \subset \ker(d\lambda|_{Z_n})$ is automatic because $d\lambda|_{Z_n}$ is the restriction of $d\lambda|_{Z_1}$ to $Z_n \subset Z_1$.

For the opposite inclusion $\supset$ we use Equation \ref{eq:lifting}. Recall that the coframing $(\alpha_i)_{i=1,\cdots,i_1}$ is adapted to the dual flag, so $(\alpha_i)_{i=1,\cdots,i_n}$ is a coframing of $Z_n$. Suppose $v = v_h + v_v \in \ker(d\lambda|_{Z_n}) \cap d\pi^{-1}(\xi)$. Then:
\begin{equation*}
\iota_v d\lambda|_{Z_1} = \sum_{i=1}^{i_1} -\alpha_i(v_h)da_i + [da_i(v_v)\alpha_i  + a_i \iota_{v_h}d\alpha_i]
\end{equation*}
splits again as a vertical and a horizontal term. The former vanishes due to the inclusion $v \in d\pi^{-1}(\xi)$. We must show that the other one does as well.

The horizontal term reads:
\begin{equation} \label{eq:Zn}
\sum_{i=1}^{i_1} da_i(v_v)\alpha_i  + a_i \iota_{v_h}d\alpha_i = \sum_{i=1}^{i_n} da_i(v_v)\alpha_i  + a_i \iota_{v_h}d\alpha_i,
\end{equation}
where we have used the inclusion $v \in \ker(d\lambda|_{Z_n}) \subset TZ_n$ to deduce that $a_i, da_i(v_v) \neq 0$ only if $i=1,\cdots,i_n$. As such, the horizontal term is exactly the same as the horizontal term for a characteristic vector of $d\lambda|_{Z_n}$, and it therefore vanishes by assumption. 
\end{proof}
Our proof of Theorem \ref{thm:main2} can then be understood as being inductive (even though we will not quite phrase it as such): We assume that we have addressed the characteristic curves contained in $Z_n$ (using as inductive hypothesis that $\xi_n$ is bracket-generating of one step less) so that we can focus on those passing through $Z_{n-1} \setminus Z_n$.

\section{Recollections of analytic geometry} \label{sec:analyticGeometry}

The subspaces of jets that we consider in this paper will be controlled thanks to the analyticity of $\xi$. We recall now some of the definitions that play a role in these constructions. Their main purpose is justifying that these spaces of $r$-jets are suitably stratified by submanifolds and can be endowed with a proper notion of codimension.

\subsection{Semianalytic sets}

We now recall results from \cite{bierstone} mostly, but we refer the reader to \cite{coste,gabrielov,goresky,hardt1,hardt2,whitney} as well. Following \cite[Definition 2.1]{bierstone} we define:
\begin{definition}
Let $W$ be a real analytic manifold and $U \subset W$ an open. We write $\OO(U)$ for the ring of real analytic functions on $U$ and $S(\OO(U))$ for the family of subsets of $U$ generated by
\[ \{ x \ | \ f(x) > 0,\, f \in \OO(U)\} \] 
and closed under finite intersection, finite union, and complement.

A subset $X$ of $W$ is \textbf{semianalytic} if each $x \in W$ has a neighborhood $U$ such that $X \cap U \in S(\OO(U))$. We will say that $X$ is \emph{smooth} if it is a smooth submanifold (without boundary) of $W$.
\end{definition}

We then define:
\begin{definition} \label{def:stratification0}
Let $W$ be a real analytic manifold. A \textbf{stratification} $\SS$ of $W$ is a partition into smooth semianalytic sets, called \textbf{strata}, such that:
\begin{itemize}
\item every point has a neighborhood intersecting only finitely many strata,
\item the \textbf{frontier} $\overline{S} \setminus S$ of a stratum $S \in \SS$ is a union of other strata.
\end{itemize}
\end{definition}
We remark that the inclusion of a stratum $S'$ in the frontier of another stratum $S$ defines a partial order $S' < S$. To construct our stratifications we will need to invoke the following result \cite[Corollary 2.8]{bierstone}:
\begin{proposition} \label{prop:closureFrontier}
The closure and the frontier of a semianalytic set are semianalytic.
\end{proposition}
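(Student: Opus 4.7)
The plan is to reduce to a local normal form and then invoke the Łojasiewicz structure theory for semianalytic sets. Since being semianalytic is itself a local condition, and since closure commutes with the (pointwise) passage to small neighbourhoods, it suffices to show that for each $p \in W$ there is a neighbourhood $V$ on which $\overline{X} \cap V$ is semianalytic. Moreover, since the closure commutes with finite unions and the frontier is $\overline{X} \setminus X$ (which, once closures are semianalytic, is a Boolean combination of semianalytic sets and hence semianalytic), it is enough to prove the statement for closures of ``basic'' semianalytic sets.

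First I would establish the local normal form: shrinking $V$, the set $X \cap V$ can be written as a finite union
\[ X \cap V = \bigcup_{i=1}^{N} B_i, \qquad B_i = \{ x \in V : f_{i,1}(x)=\cdots=f_{i,k_i}(x)=0,\ g_{i,1}(x)>0,\ldots,g_{i,l_i}(x)>0 \}, \]
with $f_{i,j}, g_{i,j} \in \OO(V)$. This is a direct unwinding of the definition of $S(\OO(V))$ via disjunctive normal form applied to the Boolean algebra generated by the sets $\{f > 0\}$ and $\{f = 0\}$. Thus $\overline{X} \cap V = \bigcup_i \overline{B_i}$, and it suffices to show each $\overline{B_i}$ is semianalytic.

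Next I would verify that each $\overline{B_i}$ is semianalytic. The obvious candidate
\[ \widetilde{B_i} := \{ x \in V : f_{i,j}(x)=0 \ \forall j,\ g_{i,j}(x)\geq 0 \ \forall j \} \]
contains $\overline{B_i}$ and is manifestly semianalytic, but the inclusion can be strict: a boundary point may fail to have nearby interior points of $B_i$ with the strict inequalities satisfied. To pin down $\overline{B_i}$ exactly, I would invoke the curve selection lemma for semianalytic sets, which says that $p \in \overline{B_i} \setminus \{p\}$ if and only if there is a real analytic arc $\gamma:[0,\varepsilon) \to V$ with $\gamma(0)=p$ and $\gamma((0,\varepsilon))\subset B_i$. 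Combined with the Łojasiewicz decomposition of the common zero set $\{f_{i,j}=0\}$ into finitely many connected analytic strata $\{S_\alpha\}$, one checks that $\overline{B_i}$ is the union of those closures $\overline{S_\alpha}$ on which the relevant inequalities $g_{i,j} \geq 0$ hold and which are adjacent to strata on which the strict inequalities hold; this is a finite Boolean combination of sets defined by analytic equalities and non-strict inequalities, hence semianalytic.

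The main obstacle is the curve-selection/stratification step. Producing the Łojasiewicz stratification of a real analytic variety and proving the curve selection lemma both require delicate arguments: one needs Weierstrass preparation and a careful induction on dimension to show that $\{f_{i,j}=0\}$ admits a finite partition into connected analytic manifolds whose adjacencies are controlled by auxiliary analytic functions. Once these tools are granted, the semianalyticity of the closure (and hence of the frontier $\overline{X}\setminus X$) is a formal consequence, as outlined above. For the purposes of this paper I would quote the corresponding results from \cite{bierstone} rather than reproduce them.
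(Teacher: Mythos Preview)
The paper does not prove this proposition at all: it is stated as a black box and attributed to \cite[Corollary 2.8]{bierstone}. Your proposal ultimately lands in the same place (quote the result from \cite{bierstone}), but you additionally sketch the classical \L ojasiewicz route --- reduction to basic sets, curve selection, and a stratification of the zero locus --- and correctly flag that the hard content lies in the stratification/curve-selection step. That sketch is accurate in outline and honest about what it does not supply; it is simply more than the paper offers, since the paper treats the statement as an imported fact with no argument.
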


For the purpose of stratifying our spaces of jets into smooth pieces we will need \cite[Proposition 2.10 and Corollary 2.11]{bierstone} as well:
\begin{proposition} \label{prop:stratifySmooth}
Let $W$ be a real analytic manifold. Any locally finite family of semianalytic subsets $(A_j)_j$ of $W$ can be refined to a stratification of $W$ (i.e. each $A_j$ will be a union of strata).
\end{proposition}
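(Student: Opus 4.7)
The plan is to prove this by induction on dimension, building on the standard fact that every semianalytic set has a canonical open dense smooth locus whose complement has strictly smaller dimension.

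First, I would enlarge $(A_j)_j$ to a larger locally finite family $\mathcal{F}$ of semianalytic subsets closed under finite Boolean operations (union, intersection, complement) and under taking closures and frontiers. Boolean closure preserves semianalyticity and local finiteness because locally the family is finite, hence generates a finite Boolean algebra. Closure under taking closures and frontiers preserves semianalyticity by Proposition \ref{prop:closureFrontier}, and preserves local finiteness because each new set is obtained from finitely many older ones. I would then partition $W$ into the \emph{atoms} of $\mathcal{F}$: two points are equivalent iff they lie in exactly the same elements of $\mathcal{F}$. Each atom is semianalytic, the family of atoms is locally finite, and each original $A_j$ is a union of atoms.

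Second, I would refine the atoms into smooth pieces by induction on dimension. The technical input, standard in the semianalytic theory (cf. \cite{bierstone}), is that for any semianalytic $X$, the regular locus $\mathrm{Reg}(X)$ of points at which $X$ is locally a smooth submanifold is semianalytic and open in $X$, while $X \setminus \mathrm{Reg}(X)$ is semianalytic of strictly smaller dimension. Starting from the top-dimensional atoms, I would take the semianalytic components of $\mathrm{Reg}(P)$ (of each pure dimension) as strata, then feed the singular locus $P \setminus \mathrm{Reg}(P)$ back into the pool of semianalytic sets to be stratified, together with its closure and frontier. Iterating on decreasing dimension terminates at dimension zero, where strata are isolated points.

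Third, I would verify the two axioms of Definition \ref{def:stratification0}. Local finiteness survives: every step replaces a set by finitely many pieces, and all data are locally finite to begin with. For the frontier condition, note that for any stratum $S$, both $\overline{S}$ and $\overline{S}\setminus S$ lie in $\mathcal{F}$, by our preliminary enlargement; hence they are unions of atoms. Those atoms have strictly smaller dimension than $S$, and have therefore already been further refined into strata by the inductive step. So $\overline{S}\setminus S$ is indeed a union of strata, and the order $S' < S$ defined by inclusion in the frontier is well-founded.

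The main obstacle is precisely the frontier condition. Without closing $\mathcal{F}$ under closures and frontiers at the outset, the boundary of one stratum could cut transversally through another, so that the resulting smooth partition would fail to be a stratification in the sense of Definition \ref{def:stratification0}. Incorporating closures and frontiers into $\mathcal{F}$ before passing to atoms is what guarantees that every relevant boundary is compatible with the partition, and hence with the subsequent inductive refinement into smooth pieces.
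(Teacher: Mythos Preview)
The paper does not give its own proof of this proposition; it is quoted from Bierstone--Milman \cite[Proposition 2.10 and Corollary 2.11]{bierstone} as a black box. So there is nothing in the paper to compare against, and your outline is essentially a sketch of the classical \L ojasiewicz--Bierstone--Milman argument.

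That said, your verification of the frontier condition has a genuine gap. In step 3 you assert that for every stratum $S$ the sets $\overline{S}$ and $\overline{S}\setminus S$ lie in $\mathcal{F}$ ``by our preliminary enlargement''. But $S$ is a connected component of $\mathrm{Reg}(P)$ for some atom $P$, produced in step 2; it is not an element of the family $\mathcal{F}$ you fixed beforehand, and there is no reason its closure should be. What step 2 actually feeds back is $P\setminus\mathrm{Reg}(P)$ together with its closure and frontier; this yields only the containment $\overline{S}\setminus S \subset (\overline{P}\setminus P)\cup(P\setminus\mathrm{Reg}(P))$, which does not by itself force $\overline{S}\setminus S$ to be a union of lower-dimensional strata (it could cut through one of them). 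The standard fix is to feed back, at each dimension, the frontier $\overline{S}\setminus S$ of every newly created stratum $S$; these are semianalytic of strictly lower dimension by Proposition \ref{prop:closureFrontier}, and then the induction closes. A secondary point: closing a family simultaneously under Boolean operations \emph{and} under taking closures is not an obviously terminating process, and you give no argument for it. It is cleaner to drop that preliminary step entirely and add closures and frontiers dynamically at each stage of the dimension induction, which is how the proof in \cite{bierstone} is organised.
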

This Proposition tells us that we can speak of the dimension of any semianalytic subset $A \subset W$. Indeed, this is the maximal dimension among the smooth strata in which it can be decomposed.

\subsection{Subanalytic sets} \label{ssec:subanalytic}

Our arguments in Sections \ref{sec:stratification} and \ref{sec:localIntegrability} require us to work with semianalytic sets in $J^r(Z_1)$, which we then project down to $J^r(M)$. However, unlike semialgebraic sets, semianalytic sets are not closed under projection. This leads us to the definition of subanalytic set \cite[Definition 3.1]{bierstone}:
\begin{definition}  \label{def:subanalytic}
Let $W$ be a real analytic manifold. A subset $X$ of $W$ is \textbf{subanalytic} if every $p \in W$ admits a neighborhood $U$ such that $X \cap U$ is the projection of a relatively compact semianalytic set.

That is: There exists a real analytic manifold $V$ and a relatively compact semianalytic subset $A \subset W \times V$ such that $X \cap U = \pi(A)$, where $\pi: W \times V \rightarrow W$ is the projection.
\end{definition}

These spaces have a notion of dimension, which is the maximal dimension computed at any of its smooth points. This follows from the fibre-cutting Lemma \cite[Lemma 3.6]{bierstone}:
\begin{proposition} \label{prop:subanalyticStrat1}
Let $W$, $X$, $U$, $V$, $A$, $\pi$ be as in the previous definition. Then, there exists a finite collection $(B_i)_i$ of smooth semianalytic sets in $W \times V$, contained in $A$, such that $\pi|_{B_i}$ is an immersion and $X \cap U = \cup_i \pi(B_i)$.
\end{proposition}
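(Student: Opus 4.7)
The plan is to proceed by induction on $\dim A$, alternating two operations: stratification into smooth pieces of locally constant differential rank, and fibre cutting to reduce dimension while preserving the projection's image. The base case $\dim A = 0$ will be immediate since, by relative compactness, $A$ consists of finitely many points, each a zero-dimensional smooth semianalytic set trivially immersed under $\pi$.

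For the inductive step I would first apply Proposition \ref{prop:stratifySmooth} to decompose $A$ into a finite family of smooth semianalytic strata $\{S_j\}$ (finite by relative compactness). On each $S_j$, the rank of $d(\pi|_{S_j})$ is upper-semicontinuous and integer-valued; its level sets are semianalytic, being described by the vanishing of analytic minors of the differential. A further application of Proposition \ref{prop:stratifySmooth} then lets me assume the rank is constant on each stratum. Strata on which this constant rank equals $\dim S_j$ yield immersions and are immediately included in the output family $(B_i)$.

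The crucial step is fibre cutting on a stratum $S$ with $\text{rank}\, d(\pi|_S) = r < \dim S$; set $s := \dim S - r$, so the fibres of $\pi|_S$ are $s$-dimensional. Locally at each $x \in S$, I can pick analytic coordinates $(v_1, \ldots, v_{\dim V})$ on $V$ and an index tuple $i_1 < \cdots < i_s$ so that $dv_{i_1}, \ldots, dv_{i_s}$ span the cotangent spaces of the fibres of $\pi|_S$ at $x$; the semianalytic slice
\[ S \cap \{v_{i_1} = v_{i_1}(x), \ldots, v_{i_s} = v_{i_s}(x)\} \]
has dimension $r$ near $x$ and locally surjects onto $\pi(S)$ under $\pi$. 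Partitioning $S$ according to which $s$-subset $\{i_1,\ldots,i_s\} \subset \{1,\ldots,\dim V\}$ achieves transversality produces at most $\binom{\dim V}{s}$ semianalytic pieces; on each piece the locus where a fixed tuple works is semianalytic (defined by non-vanishing of the corresponding minor), and relative compactness reduces us to a finite cover by slices. Assembling these, I obtain a semianalytic $S' \subset S$ with $\dim S' < \dim S$ and $\pi(S') = \pi(S)$.

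Applying the induction hypothesis to the (relatively compact) disjoint union of the $S'_j$'s over non-immersion strata yields the remaining $B_i$. The main obstacle I anticipate is maintaining the semianalytic character of all intermediate constructions while globalising the pointwise choice of transverse coordinate tuple; the combinatorial partition by index subsets, together with Proposition \ref{prop:closureFrontier} to control closures and frontiers, should handle this. Once in place, strict dimension decrease at each induction step guarantees termination after finitely many iterations, producing the required finite family $(B_i)$ of smooth semianalytic immersed pieces whose projections cover $X \cap U$.
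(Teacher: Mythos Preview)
The paper does not supply its own proof of this proposition; it is quoted verbatim as the fibre-cutting Lemma from \cite[Lemma 3.6]{bierstone}, and all that follows is the remark that it yields a well-defined notion of dimension for subanalytic sets. There is therefore nothing in the paper to compare your argument against beyond the citation.

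That said, your outline is precisely the standard proof one finds in Bierstone--Milman: induction on $\dim A$, stratify into smooth semianalytic pieces, refine so that $d(\pi|_S)$ has constant rank on each (the rank loci being semianalytic via vanishing of minors), keep the full-rank pieces as immersed $B_i$, and on the deficient-rank pieces slice transversally to the fibres by coordinate hyperplanes in $V$ to drop dimension while preserving $\pi(S)$. The one point where your sketch is a bit compressed is the passage from the local transversal slice at a single $x$ to a global semianalytic $S'$ with $\pi(S') = \pi(S)$: you correctly invoke relative compactness to pass to finitely many slices, but you should also note that the locus where a given coordinate $s$-tuple is transverse to the fibre is open semianalytic in $S$, so the finite cover by such loci (indexed by the $\binom{\dim V}{s}$ tuples) is itself semianalytic, and on each piece a \emph{single} slice value works locally---hence finitely many suffice globally. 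With that clarification your argument goes through and matches the cited reference.
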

That is: A subanalytic subset $X$ is the union of a finite collection of immersed submanifolds whose dimensions are bounded above by the dimensions of the semianalytic sets $A$ that locally project down to $X$. This collection is not quite a stratification of $X$, but this dimension control is sufficient for us.

\subsection{Some technical Lemmas} \label{ssec:technicalLemmas}

We now explain how one may partition a manifold into semianalytic subsets that are nicely adapted to singular distributions and 2-forms. This will play a key role in the next Section. These statements boil down to elementary Linear Algebra, and probably follow from some general stratification result involving analytic sections of jet spaces (in the spirit of Thom-Boardman), but we do not know of an appropriate reference.

The first ingredient, from which the subsequent claims follow, reads:
\begin{lemma} \label{lem:stratifyRankGeneral}
Let $E \to W$ be a real analytic vector bundle with connected base. Let $\{e_i\}$ be a finite collection of sections of $E$. Then, $W$ admits a partition into the semianalytic subsets
\[ A_j := \{ q \in W \,\mid\, \rank(\langle e_1,e_2,\cdots \rangle) = j \}. \]
If $W$ is connected, there is a (largest) $j_0$ such that $A_{j_0}$ is dense in $W$.
\end{lemma}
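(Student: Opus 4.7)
\emph{Plan of proof.} The statement is a rank stratification combined with a generic rank claim, both of which are local in nature. The plan is to cover $W$ by trivialising charts for $E$, translate the rank conditions into conditions on minors of analytic matrices, and then patch globally by the identity principle.

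First, I would fix an arbitrary point $q \in W$ and choose an open neighbourhood $U$ over which $E$ is trivialised by a choice of analytic framing. In this trivialisation each section $e_i$ is represented by an analytic map $U \to \mathbb{R}^k$, where $k$ is the rank of $E$, and the matrix $M(q) := [e_1(q) \mid e_2(q) \mid \cdots]$ has analytic entries. The rank of the span $\langle e_1,e_2,\ldots\rangle$ at $q$ equals the rank of $M(q)$, which is the largest $j$ such that some $j\times j$ minor is nonzero. Each such minor is an analytic function on $U$, so
\[ A_j \cap U \;=\; \Bigl(\bigcup_{\substack{\text{minor } \mu \\ \text{of size } j}} \{\mu \neq 0\}\Bigr) \;\cap\; \Bigl(\bigcap_{\substack{\text{minor } \mu \\ \text{of size } j+1}} \{\mu = 0\}\Bigr). \]
This is a finite boolean combination of sets of the form $\{\mu > 0\}$, $\{-\mu > 0\}$, and their complements, and is therefore in $S(\mathcal{O}(U))$. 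Since semianalyticity is a local property (Definition of semianalytic), $A_j$ is semianalytic in $W$, and together the $A_j$ form a finite partition of $W$.

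Second, I would handle the density claim. The set $A_{\geq j_0} := \{q : \mathrm{rank} \geq j_0\}$ is open by the first expression above (rank is lower semicontinuous), and for $j_0$ the maximum actually attained on $W$ we have $A_{j_0} = A_{\geq j_0}$, so $A_{j_0}$ is open and nonempty. Suppose, for contradiction, that $A_{j_0}$ is not dense; then its closed complement contains a nonempty open set $V$. Pick a trivialising chart $U$ with $U \cap V \neq \emptyset$ and connected. On $U \cap V$, every $j_0 \times j_0$ minor $\mu$ of $M$ vanishes, and the identity principle for real analytic functions on the connected open $U$ forces $\mu \equiv 0$ on $U$. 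Thus the rank drops strictly below $j_0$ on all of $U$. Propagating this along a chain of overlapping charts connecting $U$ to any other chart (using connectedness of $W$, and applying the identity principle chart by chart to the minors in each trivialisation, which record the coordinate-free condition $\mathrm{rank} < j_0$), I conclude that the rank is strictly below $j_0$ everywhere on $W$. This contradicts the definition of $j_0$.

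I do not anticipate a serious obstacle: the content is classical linear algebra of minors plus the identity principle. The only point requiring a little care is that the minors themselves are chart-dependent, so the propagation across charts must be phrased in terms of the coordinate-free rank condition and then re-expressed as vanishing of minors on each new chart before applying the identity principle again.
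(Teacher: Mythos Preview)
Your proposal is correct and follows essentially the same approach as the paper: trivialise locally, express the rank conditions via minors of the resulting analytic matrix to get semianalyticity, and then invoke the identity principle on a connected set to establish density of the top-rank locus. Your treatment of the chart-transition issue in the density argument is in fact slightly more careful than the paper's, which simply asserts that a nonzero analytic function on connected $W$ cannot vanish on an open set.
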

\begin{proof}
We work locally, allowing us to assume that $E$ is trivial. We can then build a matrix $B$ whose columns are the local coefficients of the vectors $\{e_i\}$. The entries of $B$ are analytic functions, and therefore so are its minors. The set $A_j$ is given by the vanishing of all $(j+1)$-minors of $B$ and the non-vanishing of a $j$-minor. It is thus semianalytic.

We now justify the second claim: If $j_0$ is the largest $j$ with $A_j$ non-empty, that means that all $(j_0+1)$-minors vanish identically in $W$ but some $j_0$-minor does not. The zero set of all $j_0$-minors is $A_{j-1}$; it must have positive codimension in $W$ because, due to connectedness, a non-zero analytic function cannot vanish over an open of $W$.
\end{proof}

\begin{corollary} \label{cor:stratifyRankPffaff}
Let $W$ be a real analytic manifold endowed with a finite collection of analytic $1$-forms $\{\alpha_i\}$. Then, any smooth semianalytic subset $V \subset W$ admits a partition into the semianalytic subsets
\[ A_j := \{ q \in V \,\mid\, \rank(T_qV \cap \ker(\alpha_1,\cdots)) = j \}. \]
If $V$ is connected, there is a (smallest) $j_0$ such that $A_{j_0}$ is dense in $V$.
\end{corollary}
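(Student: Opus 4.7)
The plan is to reduce this corollary to Lemma \ref{lem:stratifyRankGeneral} applied to the restricted $1$-forms $\beta_i := \alpha_i|_{TV}$ viewed as sections of $T^*V$. For this to make sense, I first need to regard $V$ as carrying a real-analytic structure. Since $V$ is smooth and semianalytic, standard results in semianalytic geometry (e.g.\ \cite{bierstone}) imply that $V$ is locally a real-analytic submanifold of $W$: around any point of $V$ one can choose analytic coordinates $(x,y)$ on $W$ in which $V = \{y = 0\}$. In particular, $\partial_{x_1},\ldots,\partial_{x_d}$ (with $d = \dim V$) is an analytic local frame of $TV$, and the $\beta_i$ are analytic local sections of $T^*V$.

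Next, I would exploit the duality between the rank of a collection of $1$-forms and the corank of their common kernel. At $q \in V$, the dimension of $T_qV \cap \ker(\alpha_1,\ldots)$ equals $d - \rank_q(\beta_1,\ldots,\beta_N)$, and this rank is precisely the rank of the matrix $M(q) := \bigl(\alpha_i(\partial_{x_l})(q)\bigr)_{i,l}$ whose entries are analytic functions on the chart. As in the proof of Lemma \ref{lem:stratifyRankGeneral}, the locus where $M$ has rank exactly $d - j$ is cut out by the simultaneous vanishing of all $(d-j+1)$-minors together with the non-vanishing of some $(d-j)$-minor of $M$. Each minor is a polynomial in the analytic entries of $M$, hence analytic. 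Consequently, $A_j$ intersected with each chart is semianalytic in $W$; since $V$ itself is semianalytic and semianalyticity is preserved under finite intersection, the full $A_j$ is semianalytic in $W$, and the $A_j$ clearly partition $V$.

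For the density statement, suppose $V$ is connected. There is a largest value $r_{\max} \leq \min(N,d)$ attained by $\rank M$ on $V$; set $j_0 := d - r_{\max}$. The locus $A_{j_0}$ is open because the rank function is lower semicontinuous (some $r_{\max}$-minor is non-zero there). Its complement is the common zero set of all $r_{\max}$-minors, which are analytic functions on the connected analytic manifold $V$; if this complement contained any non-empty open subset of $V$, each such minor would vanish identically, contradicting the definition of $r_{\max}$. Hence $A_{j_0}$ is open and dense in $V$.

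The main obstacle is the initial reduction: justifying that $V$ is locally a real-analytic submanifold, so that the notions of analytic frame of $TV$ and analytic sections of $T^*V$ are well-defined and the minors of $M$ are analytic on $V$. Once this is granted, the corollary is a direct translation of the minor-vanishing argument of Lemma \ref{lem:stratifyRankGeneral} from spans of vectors to coranks of kernels of $1$-forms.
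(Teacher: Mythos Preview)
Your argument is correct and close in spirit to the paper's; both reduce to the minor-vanishing mechanism of Lemma~\ref{lem:stratifyRankGeneral}. The difference is packaging: the paper keeps everything on $W$ by taking the semianalytic defining functions $\{f_k\}_{k\in K_0}$ of $V$ and applying the lemma to the enlarged collection $\{df_k,\alpha_i\}$ inside $T^*W$, using that along $V$ the rank of $\langle df_k,\alpha_i\rangle$ equals the corank of $TV\cap\ker(\alpha_i)$ in $TW$; you instead pass to $V$ and compute the rank of the restricted forms $\beta_i$ directly. Note that the paper's identity tacitly needs $T_qV=\bigcap_k\ker(d_qf_k)$, which (like your analytic chart with $V=\{y=0\}$) rests on the same fact you isolated as the main obstacle: a smooth semianalytic set is locally a real-analytic submanifold. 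So both routes depend on the same nontrivial input. The paper's formulation has the small advantage that semianalyticity \emph{in $W$} is transparent from the start, whereas in yours one must observe (as you do) that the frame $\partial_{x_l}$ extends analytically to the ambient chart so that the minors of $M$ are analytic on $W$.
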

Before we start, let us remark that the key point of the proof (and the proofs of the claims that follow) is to write the conditions we are interested in (which are defined only over $V$) as conditions in terms of (locally defined) analytic functions in $W$. This is what provides semianalyticity.
\begin{proof}
We work locally. Being a smooth semianalytic submanifold, $V$ is locally described by a finite collection of equations $\{f_k \,\Box\, 0\}_{k \in K}$, where $\Box$ denotes $=$, $<$, or $>$. Let $K_0 \subset K$ be the subcollection indexing those functions $f_k$ that vanish on $V$. 

We apply Lemma \ref{lem:stratifyRankGeneral} to yield a partition $\SS$ of $W$ into subsets according to the rank of $\langle df_k,\alpha_i\rangle_{k \in K_0,i}$. Along $V$, this is exactly the corank of $TV \cap \ker(\{\alpha_i\})$ in $TW$. We can then intersect the strata of $\SS$ with $V$ to yield the claimed partition of $V$ into semianalytic sets.
\end{proof}

We may prove the analogous result for $2$-forms:
\begin{corollary}\label{cor:stratifyRank2Form}
Let $W$ be a real analytic manifold endowed with an analytic $2$-form $\omega$. Then, any smooth semianalytic subset $V \subset W$ admits a partition into the semianalytic subsets
\[ A_j := \{ q \in V \,\mid\, \rank(\omega|_{T_qV}) = j \}. \]

If $V$ is connected, there is a (smallest) $j_0$ such that $A_{j_0}$ is dense in $V$.
\end{corollary}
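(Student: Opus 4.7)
The plan is to mimic the proof of Corollary \ref{cor:stratifyRankPffaff}: express $\rank(\omega|_{T_qV})$ as the rank of a collection of analytic sections of an analytic bundle on $W$, and then invoke Lemma \ref{lem:stratifyRankGeneral}.

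Working locally around a point $q_0 \in V$, I would first exploit that a smooth semianalytic submanifold is locally an analytic submanifold, so I can choose analytic vector fields $Y_1,\dots,Y_d$ on a neighborhood of $q_0$ in $W$ whose restrictions to $V$ form a frame of $TV$ (for instance, the coordinate vector fields in analytic coordinates adapted to $V$). The skew-symmetric matrix of analytic functions
\[ M(q) := \bigl[\omega(Y_i,Y_j)(q)\bigr]_{i,j=1,\dots,d} \]
then represents $\omega|_{T_qV}$ in the basis $\{Y_i(q)\}$ at every $q \in V$ near $q_0$, so $\rank M(q) = \rank(\omega|_{T_qV})$.

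Next I would apply Lemma \ref{lem:stratifyRankGeneral} to the trivial analytic bundle $W \times \R^d$ with sections the columns of $M$: this yields a semianalytic partition of a neighborhood of $q_0$ in $W$ according to $\rank M$. Intersecting with $V$ and patching over a locally finite cover by such neighborhoods produces the desired partition $V = \sqcup_j A_j$ into semianalytic sets, consistently because $\rank(\omega|_{T_qV})$ is an intrinsic invariant. For the density claim, observe that $\rank(\omega|_{T_qV})$ is lower semicontinuous on $V$ (non-vanishing of an $r$-minor of $M$ is an open condition), so the maximum rank $j_0$ attained on $V$ defines an open subset $A_{j_0}$. Its complement is the finite union of the remaining $A_j$, all semianalytic; by connectedness of $V$ at most one $A_j$ can have non-empty interior, forcing the rest to be of strictly smaller dimension (using Proposition \ref{prop:stratifySmooth}). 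Hence the complement has empty interior and $A_{j_0}$ is open and dense; its uniqueness among dense strata gives the $j_0$ of the statement.

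The main obstacle, and indeed the only delicate step, is the very first one: producing the analytic frame $\{Y_i\}$. It rests on the standard (but non-trivial) fact that a smooth semianalytic submanifold of a real analytic manifold is locally an analytic submanifold, with local analytic defining equations of linearly independent differential; once this is in hand, the rest of the argument is a direct reduction to Lemma \ref{lem:stratifyRankGeneral} together with elementary linear algebra on skew-symmetric matrices.
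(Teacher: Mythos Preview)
Your proof is correct and takes a more direct route than the paper's. Where you pick an analytic frame $Y_1,\dots,Y_d$ of $TV$ and form the Gram matrix $M(q)=[\omega(Y_i,Y_j)(q)]$, the paper instead keeps the defining functions $f_1,\dots,f_{k_0}$ of $V$ (assumed to have linearly independent differentials), builds the ambient $(k_0+2)$-form $\Omega = df_1\wedge\cdots\wedge df_{k_0}\wedge\omega$, contracts it against a frame $(w_i)$ of $TW$ to obtain $\beta_i = \iota_{w_i}\Omega$, and then checks by a splitting argument that $\rank\langle\beta_i\rangle = \rank(\omega|_{TV}) + k_0$ before invoking Lemma~\ref{lem:stratifyRankGeneral}. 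Both approaches rest on the same fact---that a smooth semianalytic submanifold is locally an analytic submanifold (equivalently, admits local analytic defining equations with linearly independent differentials)---which you flag explicitly and the paper assumes without comment. Your argument is certainly cleaner for this particular statement; the paper's wedge-product formulation, however, is designed to generalise to the relative versions in Corollaries~\ref{cor:stratifyRank2FormB} and~\ref{cor:stratifyRankFinal}, where one must compare $\ker(\omega|_{TV})$ with $TY$ for a further semianalytic $Y\subset V$ and a frame of $TV$ alone no longer suffices. As a final remark, your density argument via lower semicontinuity correctly identifies $j_0$ as the \emph{maximal} rank on $V$; the ``(smallest)'' in the printed statement appears to be a slip carried over from Corollary~\ref{cor:stratifyRankPffaff}, where the quantity being tracked is a kernel dimension and is therefore generically minimal.
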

\begin{proof}
Again, we describe $V$ locally using expressions of the form $\{f_k \,\Box\, 0\}_{k\in K}$. We assume that the subcollection $K_0$ (corresponding to $\Box$ being an equality) is minimal in the sense that the differentials $\{df_k\}_{k\in K_0}$ are linearly independent. We also fix a local analytic framing $(w_i)_i$ of $TW$.

The key objects to study are the analytic $(k_0+2)$--form 
\[ \Omega := df_1 \wedge \cdots \wedge df_{k_0} \wedge \omega, \]
and the associated ($k_0+1$)--forms $\beta_i := \iota_{w_i} \Omega$. The point is that $\Omega$ is a (local) object in $W$ which, along $V$, represents the restriction $\omega|_{TV}$. Thus, $\rank(\omega|_{TV})$ will be encoded in the rank of $B := \langle\beta_i\rangle_i$, as we now explain.

First: Note that, along $V$, wedging with $df_1 \wedge \cdots \wedge df_{k_0}$ kills any form restricting to zero on $TV$. As such, $\Omega$ only depends on $\omega|_{TV}$. In particular, $\Omega = 0$ and $\rank(B) = 0$ if and only if $\omega|_{TV} = 0$. We henceforth assume otherwise.

Second: $\rank(B)$ and $\rank(\omega|_{TV})$ are pointwise quantities, so we can compute them at each point $q \in V$. Fix a splitting $T_qV \oplus E_q = T_qW$; we say the vectors tangent to $T_qV$ are horizontal and the ones tangent to $E_q$ are vertical. Then we can write:
\[ \Omega_q = d_qf_1 \wedge \cdots \wedge d_qf_{k_0} \wedge \widetilde\omega_q \]
with $\widetilde\omega_q$ annihilating $E_q$. Instead of using the framing $(w_i)_i$, we compute $B$ using a basis of $T_qW$ adapted to this splitting. Namely, we pick horizontal $(v_j \in T_qV)_j$ and vertical $(e_k \in E_q)_k$ bases; we require $e_k$ to be dual to $d_qf_k$.

It follows that, at $q$, $B$ is spanned by:
\begin{itemize}
\item The forms 
\[ \iota_{e_l}\Omega_q = d_qf_1 \wedge \cdots \wedge \widehat{d_qf_l} \wedge \cdots \wedge d_qf_{k_0} \wedge \widetilde\omega_q.  \]
These span a $k_0$-dimensional subspace. This follows from the fact that $\widetilde\omega_q$ is horizontal and the $d_qf_k$ are linearly independent.
\item Together with the forms
\[ \iota_{v_j}\Omega_q = d_qf_1 \wedge \cdots \wedge d_qf_{k_0} \wedge (\iota_{v_j} \widetilde\omega_q). \]
These form a subspace of dimension $\rank(\widetilde\omega|_{T_qV}) = \rank(\omega|_{T_qV})$.
\end{itemize}
As such, $\rank(B) = \rank(\omega|_{TV})+k_0$ (unless $\Omega$ is zero). The result then follows applying Lemma \ref{lem:stratifyRankGeneral} to the collection $\{\beta_i\}$ and intersecting the resulting strata with $V$.
\end{proof}

We may also prove a relative version of the previous Corollary, in which the two entries of the 2-form are tangent to different subsets:
\begin{corollary} \label{cor:stratifyRank2FormB}
Let $W$ be a real analytic manifold endowed with an analytic $2$-form $\omega$. Let $V \subset W$ be a smooth semianalytic subset. Then, any smooth semianalytic subset $Y \subset W$ contained in $V$ admits a partition into semianalytic subsets:
\[ A_j := \{ q \in Y \,\mid\, \rank(\ker(\omega|_{TV}) \cap T_qY) = j \}. \]

If $V$ is connected, there is a $j_0$ such that $A_{j_0}$ is dense in $Y$.
\end{corollary}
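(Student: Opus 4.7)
The plan is to adapt the strategy of Corollary \ref{cor:stratifyRank2Form}, reusing the auxiliary analytic $(k_0+2)$-form $\Omega := df_1 \wedge \cdots \wedge df_{k_0} \wedge \omega$, but now contracting it with a framing of $TY$ rather than of $TW$. I would first work in a neighborhood of a point $q_0 \in Y$ small enough that $V$ is cut out by analytic functions $\{f_k\}_{k=1}^{k_0}$ with $\{df_k\}$ linearly independent, and $Y$ by additional analytic functions $\{g_l\}_{l=1}^{l_0}$ with $\{df_k, dg_l\}$ jointly linearly independent. Since $Y$ is then locally an analytic submanifold of $W$, I can pick an analytic local framing $(z_j)_{j=1}^{n_Y}$ of $TY$ and extend each $z_j$ arbitrarily to an analytic vector field on a neighborhood in $W$.

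The key computation is the Leibniz expansion
\[ \iota_{z_j} \Omega \;=\; \sum_{i=1}^{k_0} (-1)^{i-1} df_i(z_j)\, df_1 \wedge \cdots \wedge \widehat{df_i} \wedge \cdots \wedge df_{k_0} \wedge \omega \;+\; (-1)^{k_0}\, df_1 \wedge \cdots \wedge df_{k_0} \wedge \iota_{z_j}\omega. \]
At points of $Y$ every $df_i(z_j)$ vanishes since $z_j \in TY \subset TV$, so the forms $\gamma_j := (\iota_{z_j}\Omega)|_Y$ are well-defined independently of the extension and equal $\pm df_1 \wedge \cdots \wedge df_{k_0} \wedge (\iota_{z_j}\omega)$. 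By the same linear-algebra observation that underlies Corollary \ref{cor:stratifyRank2Form}, wedging with $df_1 \wedge \cdots \wedge df_{k_0}$ kills exactly the $1$-forms that vanish on $T_qV$; hence for $v \in T_qY$ the form $\iota_v \Omega|_q$ vanishes precisely when $v \in \ker(\omega|_{T_qV})$. Since the $\{z_j(q)\}$ span $T_qY$, this yields the identity
\[ \rank\langle \gamma_1(q), \ldots, \gamma_{n_Y}(q)\rangle \;=\; \dim T_qY \;-\; \dim\bigl(\ker(\omega|_{T_qV}) \cap T_qY\bigr). \]

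Applying Lemma \ref{lem:stratifyRankGeneral} to the analytic bundle $\Lambda^{k_0+1}T^*W|_Y \to Y$ with the sections $\{\gamma_j\}$ produces a partition of a neighborhood of $q_0$ in $Y$ into semianalytic subsets indexed by $\rank\langle \gamma_1, \ldots, \gamma_{n_Y}\rangle$. Since $\dim T_qY$ is locally constant, the identity above converts this into the claimed partition by $j = \dim(\ker(\omega|_{T_qV}) \cap T_qY)$. A locally finite cover of $Y$ by such analytic charts then assembles the local partitions into the global partition $\{A_j\}$ (intersections of semianalytic subsets are semianalytic), and the density of some $A_{j_0}$ on a connected component follows from the second clause of Lemma \ref{lem:stratifyRankGeneral}.

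The main obstacle I expect is the well-definedness of $\gamma_j|_Y$ together with the rank identification: one must check that contracting with $z_j$ alone—rather than with a full framing of $TW$ as in the previous corollary—still detects exactly the intersection with $T_qY$, and that extending $z_j$ off $Y$ does not introduce dependence on the extension. Both points reduce to the Leibniz computation above, after noting that the ``boundary'' terms carry factors $df_i(z_j)$ that vanish on $Y$. Once these are established, the rest is a transcription of the argument for Corollary \ref{cor:stratifyRank2Form}.
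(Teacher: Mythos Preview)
Your argument is correct, but it takes a different route from the paper. The paper keeps the framing $(w_i)$ of $TW$ and instead \emph{wedges} with $dg_1 \wedge \cdots \wedge dg_{l_0}$, defining $\beta_i := dg_1 \wedge \cdots \wedge dg_{l_0} \wedge \iota_{w_i}\Omega$; the extra wedge has the effect of restricting the $1$-forms $(\iota_v\omega)|_{TV}$ to $TY$, and the rank of $\langle\beta_i\rangle$ then equals $k_0$ plus the corank of $\ker(\omega|_{TV})\cap TY$ in $TY$. You achieve the same restriction by \emph{contracting} with a framing $(z_j)$ of $TY$, which is arguably more direct and avoids the auxiliary $k_0$-dimensional subspace that the paper has to account for. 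The paper's approach has the advantage that every object ($w_i$, $df_k$, $dg_l$, $\omega$) is manifestly analytic on $W$, so semianalyticity in $W$ is automatic from Lemma~\ref{lem:stratifyRankGeneral}; your approach needs the extension of $z_j$ off $Y$ to recover this.

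One small point worth tightening: you apply Lemma~\ref{lem:stratifyRankGeneral} to the bundle $\Lambda^{k_0+1}T^*W|_Y \to Y$, which a priori only yields sets semianalytic in $Y$ as an analytic manifold. The paper stresses (see the remark before Corollary~\ref{cor:stratifyRankPffaff}) that semianalyticity must be checked in $W$, including at frontier points of $Y$. Since you have already extended the $z_j$ analytically to a neighbourhood in $W$, the fix is immediate: apply Lemma~\ref{lem:stratifyRankGeneral} to the sections $\iota_{z_j}\Omega$ of $\Lambda^{k_0+1}T^*W$ over that neighbourhood in $W$, obtain a semianalytic partition of $W$, and then intersect with $Y$. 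On $Y$ the rank identification you proved gives exactly the $A_j$.
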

\begin{proof}
Let $\{f_k \,\Box\, 0\}_{k\in K}$ be local functions defining $V$ and let $\{g_l \,\Box\, 0\}_{l\in L}$ be an additional family of functions such that both together define $Y$. We assume that the subcollections $\{f_k\}_{k\in K_0}$ and $\{f_k,g_l\}_{k\in K_0, l \in L_0}$ are minimal.

From the $(k_0+2)$-form $\Omega := df_1 \wedge \cdots \wedge df_{k_0} \wedge \omega$ we define the ($k_0+l_0+1$)--forms 
\[ \beta_i := dg_1 \wedge \cdots \wedge dg_{l_0} \wedge \iota_{w_i} \Omega, \]
where $(w_i)_i$ is an analytic framing of $TW$. These forms represent pairing a vector in $TV$ with a vector in $TY$ using $\omega$. We reason as in the previous Corollary but we omit the discussion involving a linear splitting.

The $B := \langle \beta_i \rangle_i$ is zero dimensional if and only if $\ker(\omega|_{TV}) \supset TY$. Otherwise, $B$ is spanned by:
\begin{itemize}
\item The $k_0$-dimensional subspace spanned by the forms 
\[ dg_1 \wedge \cdots \wedge dg_{l_0} \wedge df_1 \wedge \cdots\wedge \widehat{df_l} \wedge \cdots \wedge df_{k_0} \wedge (\omega|_{TV}). \]
\item The subspace spanned by the forms
\[ dg_1 \wedge \cdots \wedge df_{k_0} \wedge (\iota_v \omega) \qquad\textrm{with } v \in TV, \]
which correspond to the $1$-forms $(\iota_v \omega)|_{TY}$.
\end{itemize}
It follows that $\rank(B)-k_0$ is the corank of $\ker(\omega|_{TV}) \cap TY$ in $TY$. Applying Lemma \ref{lem:stratifyRankGeneral} to $B$ and intersecting with $V$ allows us to conclude.
\end{proof}

We will also need a combination of Corollaries \ref{cor:stratifyRankPffaff} and \ref{cor:stratifyRank2FormB} which, in fact, subsumes them:
\begin{corollary} \label{cor:stratifyRankFinal}
Let $W$ be a real analytic manifold, $\omega$ an analytic $2$-form, and $\{\alpha_i\}$ a finite collection of analytic $1$-forms. Let $V \subset W$ be a smooth semianalytic subset. Then, any smooth semianalytic subset $Y \subset W$ contained in $V$ admits a partition into semianalytic subsets:
\[ A_j := \{ q \in Y \,\mid\, \rank(\ker(\omega|_{TV}) \cap T_qY \cap \ker(\alpha_1,\cdots)) = j \}. \]

If $V$ is connected, there is a $j_0$ such that $A_{j_0}$ is dense in $Y$.
\end{corollary}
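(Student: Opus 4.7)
The plan is to combine the techniques used in Corollaries \ref{cor:stratifyRankPffaff} and \ref{cor:stratifyRank2FormB}, by constructing a single analytic family of forms on $W$ whose rank captures the corank of the subspace we care about, and then applying Lemma \ref{lem:stratifyRankGeneral}. Working locally, I would write $V = \{f_k \,\Box\, 0\}_{k \in K}$ and $Y = V \cap \{g_l \,\Box\, 0\}_{l \in L}$, with the equality-subcollections $K_0$ and $L_0$ chosen minimal so that $\{df_k\}_{k \in K_0}$ is linearly independent along $V$ and $\{df_k, dg_l\}_{k \in K_0, l \in L_0}$ is linearly independent along $Y$. Pick an analytic framing $\{w_i\}$ of $TW$ and form, as in Corollary \ref{cor:stratifyRank2FormB}, the analytic $(k_0+2)$-form $\Omega := df_1 \wedge \cdots \wedge df_{k_0} \wedge \omega$, which along $V$ represents $\omega|_{TV}$.

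The key construction is the family of analytic forms
\[
\beta_i := \alpha_1 \wedge \cdots \wedge \alpha_N \wedge dg_1 \wedge \cdots \wedge dg_{l_0} \wedge \iota_{w_i}\Omega,
\]
where $\alpha_1,\dots,\alpha_N$ enumerates the given $1$-forms. Wedging with the $\alpha_j$ (in addition to the $dg_l$, as in Corollary \ref{cor:stratifyRank2FormB}) has the effect of further localising the pairing $(v,\cdot) \mapsto \iota_v\omega|_{TY}$ to vectors of $T_qY$ that also annihilate every $\alpha_j$. A pointwise linear algebra computation, carried out in a basis of $T_qW$ adapted to a splitting $T_qW = (T_qY \cap \ker\alpha) \oplus E_q$ with $E_q$ dual to $\{df_k, dg_l, \alpha_j\}$ (mimicking the splitting argument in Corollary \ref{cor:stratifyRank2FormB}), shows that $\rank\langle \beta_i\rangle_i$ at $q$ equals a constant wedge-factor contribution plus the corank of $\ker(\omega|_{T_qV}) \cap T_qY \cap \ker(\{\alpha_j\})$ inside $T_qY \cap \ker(\{\alpha_j\})$.

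I would then apply Lemma \ref{lem:stratifyRankGeneral} to the family $\{\beta_i\}_i$ to obtain a semianalytic partition of $W$ by the rank of $\langle\beta_i\rangle$, and intersect with $Y$. In parallel, Corollary \ref{cor:stratifyRankPffaff} applied to $Y$ with the $1$-forms $\{\alpha_j\}$ gives a semianalytic partition of $Y$ by the value of $\dim(T_qY \cap \ker(\{\alpha_j\}))$. Intersecting these two partitions, both the corank and the ambient dimension are constant on each piece, so their difference $\dim(\ker(\omega|_{T_qV}) \cap T_qY \cap \ker(\{\alpha_j\}))$ is also constant, producing the desired semianalytic subsets $A_j$. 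Density of the stratum of maximal $j$ on a connected component of $Y$ follows as in Lemma \ref{lem:stratifyRankGeneral}: the relevant rank function is upper-semicontinuous in an analytic sense, and a non-identically-zero analytic function cannot vanish on a nonempty open subset.

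The main obstacle is the pointwise linear algebra verifying the claimed rank formula for $\langle \beta_i\rangle$, especially handling situations where the restrictions $\alpha_j|_{TV}$ (or $\alpha_j|_{TY}$) fail to be linearly independent; in that case one first stratifies $V$ (resp.\ $Y$) according to the rank of $\{\alpha_j|_{TV}\}$ via Corollary \ref{cor:stratifyRankPffaff} and works stratum by stratum, where the wedge-factor contribution becomes locally constant and the clean splitting argument from Corollary \ref{cor:stratifyRank2FormB} goes through verbatim. All other bookkeeping -- minimality of $K_0, L_0$, the reduction to a trivial bundle, the semianalyticity of each intersection -- is inherited directly from the earlier corollaries.
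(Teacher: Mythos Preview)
Your strategy is the same as the paper's---build an analytic family of forms on $W$ whose rank encodes the corank of the subspace in question, then invoke Lemma~\ref{lem:stratifyRankGeneral}---but there is a genuine gap in how you handle linear dependence of the $\alpha_j$. If the $\alpha_j$ are linearly dependent at $q$ (among themselves or with the $df_k$, $dg_l$), then $\alpha_1\wedge\cdots\wedge\alpha_N$ vanishes at $q$, so every $\beta_i(q)=0$ and $\rank\langle\beta_i\rangle_i=0$ regardless of the subspace you are trying to detect. Your proposed fix (pre-stratify by the rank of $\{\alpha_j|_{TV}\}$ and then ``work stratum by stratum'') does not, as written, change the definition of the $\beta_i$; on any stratum where that rank is $r<N$ your forms are still identically zero and carry no information. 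To salvage the idea you must, on each rank-$r$ stratum, replace the full wedge by wedges over \emph{subsets} of the appropriate size (or choose locally an independent subcollection and verify the resulting partition is independent of the choice). This step is not in your write-up.

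This is exactly what the paper does, but globally rather than stratum-by-stratum: it introduces the family
\[
\beta_{a,I} := \alpha_{i_1}\wedge\cdots\wedge\alpha_{i_{|I|}}\wedge dg_1\wedge\cdots\wedge dg_{l_0}\wedge\iota_{w_a}\Omega
\]
indexed by \emph{all} subsets $I$ of the $\alpha$-index set, first partitions into loci $B_r$ where $\rank\langle\alpha_i,df_k,dg_l\rangle=r$, and then on each $B_r$ reads off the corank of $\ker(\omega|_{TV})\cap TY\cap\ker(\alpha_1,\dots)$ inside $TY$ directly from $\rank\langle\beta_{a,I}\rangle_{|I|=r,\,a}$. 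Working in several degrees simultaneously sidesteps both the dependency problem and the auxiliary partition by $\dim(TY\cap\ker\alpha)$ that your version requires. One minor aside: the dense stratum need not be the one of \emph{maximal} $j$ (consider $\omega=0$ and a single $\alpha$ vanishing along a hypersurface); the corollary only asserts that some $j_0$ is dense.
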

\begin{proof}
We fix families $\{f_k \,\Box\, 0\}_{k\in K}$ and $\{g_l \,\Box\, 0\}_{l\in L}$ as in the previous Corollary.

The added difficulty now is that the collection $\{\alpha_i\}$ is not necessarily linearly independent (by itself, or together with the $\{df_k\}_{k \in K_0}$ or $\{dg_l\}_{l \in L_0}$). As such we first invoke Lemma \ref{lem:stratifyRankGeneral} to partition $V$ depending on the rank of $\langle \alpha_i, df_k, dg_l \rangle_{i,k \in K_0, l \in L_0}$; we write $B_r$ for the locus with rank $r$.

Once we have done that, we proceed similarly to Corollary \ref{cor:stratifyRank2FormB}, but we have to do it simultaneously in different degrees. Let us elaborate: We fix a local framing $\{w_a\}$ of $TW$ and we look at the forms:
\[ \beta_{a,I} := \alpha_{i_1} \wedge \cdots \wedge \alpha_{i_{|I|}} \wedge dg_1 \wedge \cdots \wedge \iota_{w_a}(df_1 \wedge \cdots \wedge \omega). \]
Where $I = (a_{i_1} < \cdots < a_{i_{|I|}})$ ranges over all subsets of the index set of $\{\alpha_i\}$. We observe that, along $B_r$, $\rank(\langle \beta_{a,I} \rangle_{|I| = r, a})$ is precisely the corank of $\ker(\omega|_{TV}) \cap TY \cap \ker(\alpha_1,\cdots)$ within $TY$.

Thus, the claim follows by partitioning $B_r$ in terms of the rank of $\langle \beta_{a,I} \rangle_{|I| = r, a}$ and taking suitable unions of the resulting sets.
\end{proof}

Lastly, we can address the dual case:
\begin{corollary}\label{cor:stratifyRankVector}
Let $W$ be a real analytic manifold endowed with a finite collection of analytic vector fields $\{w_i\}$. Then, any smooth semianalytic subset $V \subset W$ admits a partition into the semianalytic subsets
\[ A_j := \{ q \in V \,\mid\, \rank(T_qV \cap \langle w_i\rangle_i) = j \}. \]

If $V$ is connected, there is a (smallest) $j_0$ such that $A_{j_0}$ is dense in $V$.
\end{corollary}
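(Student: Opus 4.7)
The plan is to reduce the claim to two applications of Lemma \ref{lem:stratifyRankGeneral} via an elementary Linear Algebra identity. Around any point $q_0 \in V$, choose an open $U \subset W$ and local equations $\{f_k \,\Box\, 0\}_{k \in K}$ defining $V \cap U$, with the subcollection $\{f_k\}_{k \in K_0}$ (those with $\Box$ being equality) picked so that $\{df_k\}_{k \in K_0}$ are linearly independent on $U$ (shrinking $U$ if necessary, which is possible since $V$ is smooth). Then, at every $q \in V \cap U$, $T_qV = \bigcap_{k \in K_0} \ker(d_qf_k)$, which yields
\begin{equation*}
\rank\bigl(T_qV \cap \langle w_i(q)\rangle_i\bigr) \;=\; \rank\bigl(\langle w_i(q)\rangle_i\bigr) \;-\; \rank\bigl((d_qf_k(w_i(q)))_{k \in K_0,\, i}\bigr),
\end{equation*}
because the matrix on the right encodes the restriction of the linearly independent covectors $\{d_qf_k\}$ to the subspace $\langle w_i(q)\rangle_i$.

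Both terms on the right-hand side are amenable to Lemma \ref{lem:stratifyRankGeneral}, applied directly to $V$ viewed as a real analytic submanifold. The first is the rank of the analytic sections $\{w_i|_V\}$ of the pulled-back bundle $TW|_V \to V$. The second is the rank of the analytic sections $s_k := (df_k(w_i))_i$ of the trivial bundle $V \times \R^{|I|} \to V$, whose entries are the analytic scalars $df_k(w_i)$. Taking the common refinement of the two resulting semianalytic partitions of $V \cap U$ and regrouping by the value of the difference of the two ranks produces a finite, semianalytic partition of $V \cap U$ on each piece of which $\rank(T_qV \cap \langle w_i(q)\rangle_i)$ is constant; the local pieces of $A_j$ are the subsets on which this difference equals $j$. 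Gluing over a locally finite cover of $V$ by such charts shows that each $A_j$ is semianalytic in $W$.

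For the density claim, note that the intersection-rank function is upper semicontinuous (the dimension of an intersection of subspaces can only jump up at degenerate points), so the minimum $j_0$ of this function satisfies $A_{j_0} = \{\rank \leq j_0\}$, which is open in $V$. To see that $A_{j_0}$ is \emph{dense}, I would run the argument from the proof of Lemma \ref{lem:stratifyRankGeneral} on $V$ itself: the locus where the rank of either of the two matrices fails to reach its generic maximum is cut out of $V$ by the simultaneous vanishing of analytic functions (certain minors) that do not vanish identically, so by connectedness of the real analytic submanifold $V$ this locus has positive codimension and is nowhere dense. Its complement, where both ranks are simultaneously generic, is open and dense and contained in $A_{j_0}$. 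The only real obstacle is the bookkeeping in paragraph two — ensuring that the difference of two independently stratified ranks genuinely produces a single semianalytic stratification of $V$ — which is settled by carrying everything out in a fixed analytic chart $U$ where all matrix entries are real analytic functions.
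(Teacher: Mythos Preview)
Your approach is essentially the paper's: both use the identity
\[
\rank\bigl(T_qV \cap \langle w_i\rangle\bigr) \;=\; \rank\bigl(\langle w_i\rangle\bigr) - \rank\bigl((df_k(w_i))_{k\in K_0,\,i}\bigr)
\]
and invoke Lemma~\ref{lem:stratifyRankGeneral} for each term, then take the common refinement. One cosmetic difference: you apply the Lemma to $V$ itself, whereas the paper (following the remark before Corollary~\ref{cor:stratifyRankPffaff}) applies it on the ambient $W$ and then intersects with $V$. Since your matrix entries $df_k(w_i)$ and the $w_i$ are analytic on $U\subset W$ anyway, this amounts to the same thing, and it sidesteps having to justify that a smooth semianalytic set is a real analytic submanifold.

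There is, however, a genuine error in your density paragraph. The intersection-rank $q\mapsto\dim(T_qV\cap\langle w_i(q)\rangle)$ is \emph{not} upper semicontinuous in general. Take $W=V=\R^2$ and $w_1=x\,\partial_x$: the intersection-rank is $1$ on $\{x\neq 0\}$ and drops to $0$ at $x=0$, so the minimum is attained on a nowhere-dense set, and $A_{j_0}$ for $j_0=\min$ is not open. The issue is that $\dim(T_qV\cap\langle w_i\rangle)=\dim T_qV+\dim\langle w_i\rangle-\dim(T_qV+\langle w_i\rangle)$ is a difference of two lower-semicontinuous quantities, hence has no definite semicontinuity. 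Your second argument---that the locus where \emph{both} matrix ranks attain their generic maxima is open and dense in the connected $V$, and that the intersection-rank is constant there---is correct and is exactly what the paper (implicitly) does; it establishes the existence of a dense $A_{j_0}$, which is all that is actually used downstream. Just drop the semicontinuity sentence and the identification of $j_0$ with the minimum.
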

\begin{proof}
We can evaluate the $(w_i)$ in the differentials $\{df_k\}_{k\in K_0}$ that annihilate $TV$. In doing so we may form a matrix with entries $\{df_k(w_i)\}_{i,k}$. Its rank at each point of $V$ is the dimension of $\langle w_i\rangle_i/TV$. Applying Lemma \ref{lem:stratifyRankGeneral} we partition $V$ in terms of this rank. We may then refine this partition by looking at the rank of $\langle w_i \rangle_i$ instead. The difference between these two numbers is precisely the rank of $TV \cap \langle w_i \rangle_i$. We deduce that the set $A_j$ is a union of semianalytic sets (and thus semianalytic).
\end{proof}

\section{Stratifying the annihilator} \label{sec:stratification}

We work with $(M,\xi)$ analytic and bracket-generating. In this Section we describe a stratification of $Z_1 = \Ann(\xi)$ that is nicely adapted to $d\lambda$ (equivalently, adapted to the rank of the curvature(s) associated to $\xi$). Our discussion could easily be adapted to the non-bracket-generating case, but this is unnecessary for our purposes.

First we work under regularity assumptions (Subsection \ref{ssec:stratificationR}), and then we adapt our approach to the general case (Subsection \ref{ssec:stratificationNR}). In Subsection \ref{ssec:dimensionCounting}, the bracket-generating assumption will provide us with certain key dimension bounds.

\subsection{The regular case} \label{ssec:stratificationR}

We write $n_0$ for the index in which the Lie flag stabilises. Then the dual flag reads:
\[ Z_1 \supset Z_2 \supset Z_3 \supset ... \supset Z_{n_0}. \]
Note that $Z_{n_0}$ is the zero section $M$ precisely because $\xi$ is bracket-generating. For notational ease we set $Z_{n_0+1} = \emptyset$.

We are interested in stratifications satisfying the following properties:
\begin{definition} \label{def:stratification}
Let $(M,\xi)$ be analytic, bracket-generating, and regular. A stratification $\SS$ of $Z_1 = \Ann(\xi)$ is said to be \textbf{Ehresmann-Liouville} if:
\begin{itemize}
\item For every $S \in \SS$, there is $n$ such that $S \subset Z_n \setminus Z_{n+1}$.
\item For every $S \in \SS$ and $n$ as above, the intersection 
\[ \Xi_S := TS \cap \ker(d\lambda|_{Z_n}) \cap d\pi^{-1}(\xi) \]
is a distribution of constant rank.
\item The strata $S \in \SS$, as well as the corresponding $\Xi_S$, are invariant under the scaling $\eta$-action.
\end{itemize}
\end{definition}
The reasoning behind the definition of $\Xi_S$ is Proposition \ref{prop:liftingHigherZ}: It states that, along $Z_n$, it is enough to study those curves that project to $\xi$ and are characteristic for $Z_n$.

Separately: From the last item we deduce that the restriction $\SS \setminus \{M\}$ of an Ehresmann-Liouville stratification $\SS$ to $Z_1 \setminus M$ is equivalent to a stratification $\P\SS$ of $\P(Z_1)$ (and, in practice, it is often more convenient to think of the latter). Recall that, following Proposition \ref{prop:Hsu}, we are only interested in what happens away from the zero section.

\begin{proposition}\label{prop:stratification}
Let $(M,\xi)$ be real analytic, bracket-generating, and regular. Then, its annihilator $Z_1$ admits an Ehresmann-Liouville stratification $\SS$. 
\end{proposition}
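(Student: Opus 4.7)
The plan is to construct $\SS$ by iterated refinement starting from the dual Lie flag. The coarsest partition is $Z_1 = \bigsqcup_{n=1}^{n_0} Y_n$ with $Y_n := Z_n \setminus Z_{n+1}$ and the convention $Z_{n_0+1} := \emptyset$ (so that $Y_{n_0} = M$ is the zero section). Each $Y_n$ is an $\eta$-invariant analytic submanifold of $Z_1$, and the decomposition already realises condition (i) of Definition \ref{def:stratification}.

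To enforce condition (ii), I would appeal to Corollary \ref{cor:stratifyRankFinal} with $W = Z_1$, $V = Z_n$, $Y = Y_n$, $\omega = d\lambda$, and $\{\alpha_i\} = \{\pi^*\beta_i\}$ where $\{\beta_i\}$ is a local coframing of $TM/\xi$. Since $\ker(\pi^*\beta_i) = d\pi^{-1}(\xi)$, the corollary outputs a locally finite partition of $Y_n$ into semianalytic subsets on which the rank of $\ker(d\lambda|_{Z_n}) \cap d\pi^{-1}(\xi)$ is constant. Collecting these partitions across all $n$ and invoking Proposition \ref{prop:stratifySmooth} yields a smooth stratification $\SS_0$ of $Z_1$. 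On any stratum $S$ of $\SS_0$ that is open in its $Y_n$, condition (ii) already holds, because $\Xi_S$ equals the restriction of $\ker(d\lambda|_{Z_n}) \cap d\pi^{-1}(\xi)$ to $S$. For lower-dimensional strata $S$, however, $\Xi_S = TS \cap \ker(d\lambda|_{Z_n}) \cap d\pi^{-1}(\xi)$ may fail to have constant rank; I would then re-apply Corollary \ref{cor:stratifyRankFinal} with the same $V$ but $Y = S$ to partition $S$ by the rank of $\Xi_S$, and refine the combined family using Proposition \ref{prop:stratifySmooth} once more. Each iteration strictly decreases the dimension of the strata that still require attention, so the process terminates in finitely many steps. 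The resulting $\Xi_S$ is a smooth distribution because a constant-rank intersection of smooth subbundles is itself smooth.

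For condition (iii), I would exploit that all ingredients are $\eta$-equivariant with compatible weights: the $Z_n$ are linear subbundles, $\pi$ is $\eta$-invariant, $\xi$ is pulled back from $M$, and $\eta_a^*\lambda = a\lambda$ implies $\eta_a^* d\lambda = a\,d\lambda$. Hence the rank functions associated with $\Xi$ are $\eta$-invariant, and the semianalytic pieces produced by Corollary \ref{cor:stratifyRankFinal} are automatically $\eta$-invariant. To make the smooth refinements coming from Proposition \ref{prop:stratifySmooth} $\eta$-invariant as well, I would carry out each refinement downstairs on the projective bundle $\mathbb{P}(Z_n \setminus Z_{n+1}) \to M$---an analytic manifold, proper over $M$, onto which the relevant rank functions and semianalytic sets descend---and then pull the resulting stratification back. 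The zero section $M$ is adjoined as a single $\eta$-fixed stratum, and condition (ii) there is automatic since $\lambda|_M = 0$ forces $\Xi_M = \xi$. The main obstacle is verifying this interplay: one must check that pulling back a stratification of the projective quotient yields a genuine refinement of the upstairs semianalytic family that still satisfies the frontier condition, which follows from the fact that $Z_n \setminus Z_{n+1} \to \mathbb{P}(Z_n \setminus Z_{n+1})$ is a surjective analytic submersion with connected fibres.
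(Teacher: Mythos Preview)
Your proof is correct and follows essentially the same strategy as the paper: iterate Corollary \ref{cor:stratifyRankFinal} and Proposition \ref{prop:stratifySmooth} starting from the dual Lie flag until the ranks stabilise, then secure $\eta$-invariance by carrying out the refinements on the projective quotient and pulling back. The only cosmetic difference is that the paper first partitions each $Z_n$ (rather than $Z_n \setminus Z_{n+1}$) by the rank of $\Xi_{Z_n}$ and observes that $Z_{n+1}$ emerges as the maximal-rank locus, whereas you impose the flag decomposition from the outset; your termination argument (dimension of the strata still needing attention strictly drops) is a clean variant of the paper's stabilisation argument.
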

\begin{proof}[Proof of Proposition \ref{prop:stratification}]
We restrict $d\lambda$ to $Z_n$. We will work simultaneously on all pairs $(Z_n,d\lambda|_{Z_n})$, producing a stratification of $Z_1$. We will use the results from Subsection \ref{ssec:technicalLemmas} repeatedly.

We work locally in $M$. As such, we may choose an analytic coframing $A_n$ of $\xi_n$. Then, over $Z_n$, we consider the (singular) distribution
\[ \Xi_{Z_n} := \ker(d\lambda|_{Z_n}) \cap d\pi^{-1}(\xi). \]
It is defined as the common kernel in $TZ_n$ of the 2-form $d\lambda$ and the coframing $A_n$ (pulled back to $Z_n$). An application of Corollary \ref{cor:stratifyRankFinal} then yields a partition 
\[ \SS_1^n := \left(S_{n,j}^{(1)}\right)_{j \in J_n^{(1)}} \]
of $Z_n$ into semianalytic subsets such that $\Xi_{Z_n}$ has constant rank $j$ over $S_{n,j}^{(1)}$. The $S_{n,j}^{(1)}$ are not necessarily smooth.

Note that the piece $S_{n,\rank(\xi)}^{(1)}$ is precisely $Z_{n+1}$, which we have partitioned according to the rank of $\Xi_{Z_{n+1}}$ instead. Additionally, according to Equation \ref{eq:Liouville}, the distribution $\Xi_M$ associated to the zero section $Z_{n_0} = M$ is $\xi$. We will henceforth not pay attention to $Z_{n_0}$.

We define $\SS_1$ to be the partition of $Z_1$ which in each $Z_n$ is given by $\SS_1^n$. Proposition \ref{prop:stratifySmooth} refines $\SS_1$ to a stratification $\SS_2$ by smooth semianalytic subsets. Because $\SS_1$ was a refinement of the dual flag, so is $\SS_2$; we denote by $\SS_2^n$ the induced stratification of $Z_n$. We may iterate this process: We apply Corollary \ref{cor:stratifyRankFinal} to each triple $(S \in \SS_{2l}^n,d\lambda|_{Z_n},A_n)$. This yields a partition of $S$ into semianalytic subsets according to the rank of $\Xi_S := TS \cap \Xi_{Z_n}$. Since we do it for all $S \in \SS_{2l}$, we obtain a locally finite partition $\SS_{2l+1}$ of $Z_1$. We refine it again to a stratification $\SS_{2l+2}$ using Proposition \ref{prop:stratifySmooth}.

We claim that this process terminates in finitely many steps (say, $l_0$), yielding a stratification $\SS := \SS_{2l_0}$ whose strata
\[ S \in \SS^n := \SS_{2l_0}^n = \left(S_{n,j}^{(2l_0)}\right)_{j \in J_n^{(2l_0)}} \]
have constant rank distributions $\Xi_S = TS \cap \Xi_{Z_n}$. Indeed: any infinite sequence of semianalytic subsets must necessarily stabilise in dimension, and thus also in rank of the distribution.

Lastly, we claim that $\SS$ is $\eta$-invariant. This follows from the fact that $d\lambda$ is homogeneous, so its kernel is dilation invariant. In particular, we could have stratified $\P(Z_1)$ instead, following the same steps, and then take preimages.
\end{proof}

\begin{remark}
In the smooth setting, we cannot possibly expect the rank of $d\lambda|_{Z_n}$ to cut out smooth submanifolds and, indeed, one can construct examples where Cantor sets arise as the decomposing sets $S$. However, as we pointed out in Remark \ref{rem:genericCase}, one expects Thom-Boardman transversality to produce a stratification of $Z_1$ if $\xi$ is smooth and generic.
\end{remark}

\subsection{The non-regular case} \label{ssec:stratificationNR}

We will now adapt the previous definition and proof to the non-regular case. The main idea is to first stratify the base manifold $M$ to yield a dense piece in which regularity holds, and then stratify $Z_1$ over that piece as before. Over the other pieces we will not have to be particularly careful: the fact that they are lower dimensional is enough for our dimension counting arguments in Subsection \ref{ssec:dimensionCounting}.

\begin{definition} \label{def:stratificationBaseNonRegular}
Let $(M,\xi)$ be analytic, bracket-generating, possibly not regular. A pair of stratifications $\SS_M$ (of $M$) and $\SS$ (of $Z_1 = \Ann(\xi)$) is \textbf{Ehresmann-Liouville} if:
\begin{itemize}
\item For each $S \in \SS_M$, the pointwise rank of $TS \cap \Gamma(\xi)^{(n)}$ is constant and it thus arises from a distribution $\xi_{S,n} \subset TS$.
\item $\SS$ is subordinated to the preimage in $Z_1$ of $\SS_M$.
\item When restricted to an open strata of $\SS_M$, the stratification $\SS$ is Ehresmann-Liouville. 
\end{itemize}
\end{definition}
We write $\SS^o,\SS^c_M \subset \SS_M$ for the open/non-open strata. Similarly, we write $\SS^o,\SS^c \subset \SS$ for those strata lying over $\SS^o$ and $\SS^c_M$.

\begin{proposition} \label{prop:stratificationNR}
Let $(M,\xi)$ be analytic and bracket-generating. Then $\Ann(\xi)$ admits an Ehresmann-Liouville pair $(\SS_M,\SS)$.
\end{proposition}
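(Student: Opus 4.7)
The plan is to reduce the statement to the regular case treated in Proposition \ref{prop:stratification}. I would first produce a stratification $\SS_M$ of $M$ along which the Lie flag of $\xi$ becomes a flag of constant-rank sub-distributions; on the open strata of $\SS_M$ the restriction of $\xi$ will then be regular and bracket-generating, so the construction of Proposition \ref{prop:stratification} applies verbatim to the corresponding pieces of $Z_1$. Over non-open strata, Definition \ref{def:stratificationBaseNonRegular} requires nothing beyond subordination to $\SS_M$, so any smooth semianalytic refinement suffices there.

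To build $\SS_M$, I would work locally and pick an analytic framing $(X_1,\dots,X_k)$ of $\xi$ in a chart. For each $n \le n_0$, the module $\Gamma(\xi)^{(n)}$ is then locally generated by a finite collection $\mathcal{X}_n$ of iterated brackets of depth at most $n$. First I would apply Lemma \ref{lem:stratifyRankGeneral} to each $\mathcal{X}_n$ to partition the chart into semianalytic subsets on which the pointwise rank of $\Gamma(\xi)^{(n)}$ is constant; intersecting over $n$, gluing across charts, and refining via Proposition \ref{prop:stratifySmooth} yields a preliminary stratification by smooth semianalytic submanifolds. Then on each such stratum $S$, Corollary \ref{cor:stratifyRankVector} applied to the local generators of $\Gamma(\xi)^{(n)}$ would further partition $S$ into pieces of constant rank of $TS \cap \Gamma(\xi)^{(n)}$, and I would iterate until all these ranks stabilise. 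Termination is by an integer descent argument: the ranks are non-negative integers bounded by $\dim M$, so only finitely many refinements can be strictly non-trivial, exactly as in the proof of Proposition \ref{prop:stratification}. The resulting $\SS_M$ has the property that the intersections $\xi_{S,n} := TS \cap \Gamma(\xi)^{(n)}$ are well-defined constant-rank distributions on each stratum.

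To build $\SS$ over $\SS_M$, I would separate the open and non-open strata. On an open stratum $S^o \in \SS_M^o$, openness gives $TS^o = TM|_{S^o}$, so $\xi_{S^o,n} = \Gamma(\xi)^{(n)}|_{S^o}$; thus $\xi|_{S^o}$ is regular, and because local sections of $\xi|_{S^o}$ extend (via bump functions) to local sections of $\xi$ and restrict back, the Lie flag of $\xi|_{S^o}$ still reaches $TS^o$ at step $n_0$, keeping it bracket-generating. I would then apply Proposition \ref{prop:stratification} to $(S^o,\xi|_{S^o})$ to produce an Ehresmann-Liouville stratification $\SS^{S^o}$ of $\Ann(\xi)|_{S^o}$, inheriting $\eta$-invariance from that Proposition. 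On a non-open stratum $S^c \in \SS_M^c$, I would simply invoke Proposition \ref{prop:stratifySmooth} on the semianalytic set $\pi^{-1}(S^c)\cap Z_1$ to obtain a smooth semianalytic stratification $\SS^{S^c}$. Setting $\SS := \bigcup_{S \in \SS_M} \SS^S$, the resulting pair $(\SS_M,\SS)$ satisfies by construction all three bullets of Definition \ref{def:stratificationBaseNonRegular}.

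The main obstacle is not a single sharp step but the bookkeeping: I must check that the iterative rank refinement used to build $\SS_M$ genuinely terminates and produces a locally finite stratification rather than just a partition, and that restricting to an open stratum does not destroy either regularity or bracket-generation. The first is the integer descent already mentioned; the second is automatic because openness of $S^o$ allows us to realise every local section of $\xi|_{S^o}$ as the restriction of a local section of $\xi$, so the two Lie flags agree on $S^o$. Everything else is a direct repackaging of the regular construction plus the technical results of Section \ref{sec:analyticGeometry}.
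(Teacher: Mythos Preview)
Your overall strategy matches the paper's: stratify $M$ first so that the Lie flag has constant rank on each stratum, then run the regular construction over the open strata. The construction of $\SS_M$ is essentially the paper's (the paper invokes Corollary \ref{cor:stratifyRankVector} directly; your preliminary use of Lemma \ref{lem:stratifyRankGeneral} is redundant but harmless).

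There is, however, a genuine gap in your gluing step. Declaring $\SS := \bigcup_{S \in \SS_M} \SS^S$ does not produce a stratification of $Z_1$ in the sense of Definition \ref{def:stratification0}: a stratum $T \in \SS^{S^o}$ will typically have frontier points in $\pi^{-1}(\partial S^o)$, and these lie in some $\pi^{-1}(S^c)$ that you have stratified \emph{independently}, with no guarantee that the frontier of $T$ is a union of strata there. The fix is to apply Proposition \ref{prop:stratifySmooth} to all of $Z_1$ at once, simultaneously refining the partitions produced over each open $S^o$ together with the preimage partition coming from $\SS_M$. But this global refinement can cut a stratum $T$ over $S^o$ into lower-dimensional pieces $T'$, on which $\Xi_{T'} = TT' \cap \ker(d\lambda|_{Z_n}) \cap d\pi^{-1}(\xi)$ need no longer have constant rank (since $TT' \subsetneq TT|_{T'}$). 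Hence the global smoothing must be performed \emph{inside} the rank-refinement loop, not after black-boxing Proposition \ref{prop:stratification} on each open piece separately; the integer-descent termination argument then still applies. This is exactly how the paper proceeds, and it is the one point where your modular approach needs to be unwound.
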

\begin{proof}
We work locally in $M$, allowing us to trivialise $\xi$. Since $\xi$ admits a framing $A_1$, each $\Gamma(\xi)^{(n)}$ is finitely generated by brackets with entries in said framing; we denote this generating set by $A_n$. We may then apply Corollary \ref{cor:stratifyRankVector} to each $A_n$ to partition $M$. We then intersect all these partitions and apply Proposition \ref{prop:stratifySmooth} to yield a stratification by smooth semianalytic sets. We iterate this process: we first apply Corollary \ref{cor:stratifyRankVector} to $A_n$ and each stratum of the previous stratification and then Proposition \ref{prop:stratifySmooth}. After finitely many steps the process terminates, yielding $\SS_M$.

Let $S \in \SS_M^o$ be an open stratum (there is one in each connected component due to analyticity). To construct $\SS$, we follow the proof of Proposition \ref{prop:stratification} verbatim with $Z_1|_S$ instead of $Z_1$. The key points are:
\begin{itemize}
\item $Z_1|_S$ is a semianalytic set of $Z_1$ (indeed, it is cut out by pulling back the equations of $S$). As such, we may apply Corollary \ref{cor:stratifyRankFinal} to $d\lambda$, the pullback of a coframing of $\xi$, and $Z_1|_S$. This partitions $Z_1|_S$ into sets that are semianalytic globally in $Z_1$.
\item The set where $d\lambda|_{Z_1}$ has minimal rank is $Z_2|_S$. Being the locus of minimal rank, it is semianalytic and, by our assumptions on $S$, it is a vector bundle of constant rank over $S$. We can apply Corollary \ref{cor:stratifyRankFinal} to stratify it as in the regular case. Iterating this procedure defines the higher annihilators $Z_n|_S$ and their partitions.
\item Once we have partitioned $Z_1|_S$ using this inductive procedure on $n$, we apply Proposition \ref{prop:stratifySmooth} to the whole of $Z_1$. When we do so, we also take into account the partition of $Z_1$ induced by $\SS_M$ by taking preimages. This refines all these partitions to a global stratification.
\end{itemize}
As in the regular case, we repeat this scheme until no further refinements are needed because the distributions $\Xi_S$ have constant rank. The claimed properties hold by construction.
\end{proof}

\subsection{Dimension counting} \label{ssec:dimensionCounting}

Suppose $\xi$ is regular. Fix a stratum $S \in \SS$. According to Lemma \ref{lem:lifting}, $\ker(d\lambda|_{Z_n})$ is a (partial) lift of $\xi_n$, so
\[ \rank(\ker(d\lambda|_{Z_n})) \leq \rank(\xi_n). \]
It follows that $\ker(d\lambda|_{Z_n}) \cap d\pi^{-1}(\xi)$, which contains $\Xi_S$, is a partial lift of $\xi$, so the analogous bound for their ranks holds.

The following more refined bound, which we will exploit, depends on the bracket-generating condition:
\begin{lemma} \label{lem:dimensionCounting}
Let $(M,\xi)$ be regular, analytic, and bracket-generating. Let $\SS$ be an Ehresmann-Liouville stratification. Then 
\[ \rank(\Xi_S) < \rank(\xi) \]
for every stratum $S \in \SS$ not contained in the zero section.
\end{lemma}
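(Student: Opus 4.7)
The plan is to argue by contradiction. Suppose $\rank(\Xi_S) = \rank(\xi)$ on some stratum $S \subset Z_n \setminus Z_{n+1}$ with $S$ not contained in the zero section, and fix $(q,\alpha) \in S$ with $\alpha \neq 0$. I will show $\alpha$ annihilates iterated brackets of $\Gamma(\xi)$ of every depth; by bracket-generating, $\alpha(T_qM)=0$, a contradiction. Under the rank assumption, $d\pi:\Xi_S \to \xi$ is a pointwise isomorphism (injectivity comes from Lemma \ref{lem:lifting}, equal ranks force surjectivity), so every $v \in \Gamma(\xi)$ admits a unique lift $\widetilde v \in \Gamma(\Xi_S) \subset \Gamma(TS)$ with $\widetilde v$ $\pi$-related to $v$. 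In particular, $d\pi[\widetilde v_1,\widetilde v_2] = [v_1,v_2]$, and iterated Lie brackets of such lifts project to the corresponding iterated brackets in $\Gamma(\xi)$.

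The base case relies on Cartan's magic formula applied to $\lambda|_S$ on $S$:
\[ d\lambda|_S(X,Y) = X\cdot \lambda|_S(Y) - Y\cdot \lambda|_S(X) - \lambda|_S([X,Y]). \]
For $X = \widetilde v, Y = \widetilde w \in \Xi_S$, the pointwise identity $\lambda|_S(\widetilde v)_{(q,\alpha)} = \alpha(d\pi(\widetilde v)) = \alpha(v) = 0$ holds because $v \in \xi \subset \xi_n$ and $\alpha \in Z_n = \Ann(\xi_n)$; similarly $\lambda|_S(\widetilde w) = 0$. Moreover $d\lambda|_S(\widetilde v,\widetilde w) = 0$ because $\widetilde w \in \Xi_S \subset \ker(d\lambda|_{Z_n})$ and $\widetilde v \in TS \subset TZ_n$. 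Cartan then forces $\lambda|_S([\widetilde v,\widetilde w]) = 0$, i.e.\ $\alpha([v,w]) = 0$ for every $v,w \in \Gamma(\xi)$. Hence $\alpha \in Z_2$.

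The induction step is structurally identical. Given an iterated bracket expression $u = [v_1,[v_2,\cdots[v_{k-1},v_k]\cdots]]$ with $v_i \in \Gamma(\xi)$, let $\widetilde u \in \Gamma(TS)$ be the nested bracket of the corresponding lifts $\widetilde v_i \in \Xi_S$; by $\pi$-relatedness, $d\pi(\widetilde u) = u$. The inductive hypothesis says $\alpha$ vanishes on iterated brackets of depth $\leq k$, so $\lambda|_S(\widetilde u) \equiv 0$ as a function on $S$. For any $\widetilde w \in \Xi_S$ lifting $w \in \Gamma(\xi)$, Cartan's formula now reads
\[ 0 = d\lambda|_S(\widetilde u,\widetilde w) = -\lambda|_S([\widetilde u,\widetilde w]), \]
because $\lambda|_S(\widetilde w) = 0$, $\lambda|_S(\widetilde u) = 0$, and $\widetilde w \in \ker(d\lambda|_{Z_n})$ kills any partner in $TZ_n \supset TS$. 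Projecting gives $\alpha([u,w]) = 0$. Since every iterated bracket of depth $k+1$ of $\Gamma(\xi)$-sections is a $C^\infty$-combination of such expressions (via Jacobi), $\alpha$ vanishes on brackets of depth $k+1$ as well. Bracket-generating gives some $N$ with iterated brackets of depth $\leq N$ spanning $T_qM$, whence $\alpha = 0$, contradicting $(q,\alpha) \in S$ with $\alpha \neq 0$.

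The main subtlety I expect is keeping the Cartan computation honest across the induction: the second entry $\widetilde w$ must remain in $\Xi_S$ so that it contracts $d\lambda|_{Z_n}$ to zero against arbitrary $TS$-vectors, while the first entry $\widetilde u$ is only in $TS$ (it is a Lie bracket of $\Xi_S$-sections but is not itself in $\Xi_S$, which would require integrability one has no right to expect). The rest is routine $\pi$-relatedness of brackets and the bookkeeping that $\alpha$ vanishes on successive generations of brackets — the bracket-generating assumption then does the closing work.
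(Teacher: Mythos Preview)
Your argument is correct and takes a genuinely different route from the paper's. The paper works pointwise: at $\alpha \in Z_n \setminus Z_{n+1}$ it uses that $d\alpha|_{\xi_n} \neq 0$ and invokes the Jacobi identity to deduce $\xi \not\subset \ker(d\alpha|_{\xi_n})$, thereby bounding the larger space $\ker(d\lambda|_{Z_n}) \cap d\pi^{-1}(\xi)$ without ever touching the tangent space $TS$. You instead exploit the constant-rank hypothesis on $\Xi_S$ to build smooth $\pi$-related lifts of $\Gamma(\xi)$ into $\Gamma(TS)$ and run an inductive Cartan-formula computation along the whole stratum, concluding that every covector in $S$ annihilates all iterated brackets of $\xi$ and hence vanishes. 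The paper's approach is shorter and uses the stratification only to locate $S$ inside the dual flag; yours leans essentially on the smooth structure of $S$ and on the $TS$ constraint in the definition of $\Xi_S$ (which is exactly what lets your first Cartan entry $\widetilde u$ live merely in $TS$ while the second entry $\widetilde w \in \Xi_S$ kills $d\lambda|_{Z_n}$). As a bonus, your argument never needs to know which $Z_n \setminus Z_{n+1}$ the stratum sits in --- bracket-generating does all the closing work at the end.
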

\begin{proof}
Let $\alpha \in S \subset Z_n \setminus Z_{n+1}$ based at $q \in M$. According to Lemma \ref{lem:characterisationZ}, this means that $d_q\alpha|_{\xi_n}$ (as a form in the base) is not zero (here we have chosen some extension of $\alpha$ to a local section of $Z_n$, but the concrete extension is not important). Due to the Jacobi identity, this implies that $\xi$ is not completely contained in $\ker(d\alpha|_{\xi_n})$. It follows from Corollary \ref{cor:characterisationZ} that $\ker_\alpha(d\lambda|_{Z_n}) \cap (d_\alpha\pi)^{-1}(\xi_q)$ is a lift with strictly less rank than $\xi_q$. The same follows then for $(\Xi_S)_\alpha$.

Now we use the bracket-generating condition: Our reasoning applies in each $Z_n \setminus Z_{n+1}$ with $n < n_0$. If $\xi$ is bracket-generating, every $\alpha \neq 0$ is contained in such a $Z_n$.
\end{proof}

We obtain similar dimension bounds in the non-regular case:
\begin{lemma} \label{lem:dimensionCountingNR}
Let $(M,\xi)$ be analytic and bracket-generating, with Ehresmann-Liouville pair $(\SS_M,\SS)$. Then:
\begin{itemize}
\item $\rank(\xi \cap TS) < \rank(\xi)$ for every stratum $S \in \SS_M^c$ other than the open ones.
\item $\rank(\Xi_S) < \rank(\xi)$ for every stratum $S \in \SS^o$ not contained in the zero section.
\end{itemize}
\end{lemma}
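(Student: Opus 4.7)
The plan is to reduce the second assertion to the regular case already handled by Lemma \ref{lem:dimensionCounting}, and to prove the first assertion by a direct bracket-generating argument on the base.

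For the first statement, let $S \in \SS_M^c$ and suppose for contradiction that $\rank(\xi_q \cap T_qS) = \rank(\xi)$ at some $q \in S$. The first clause of Definition \ref{def:stratificationBaseNonRegular}, applied with $n=1$, says that this pointwise rank is locally constant along $S$, so $\xi_p \subset T_pS$ for every $p$ in the connected component of $S$ through $q$. Pick a local framing $X_1, \ldots, X_k$ of $\xi$ near $q$; each $X_i$ is then tangent to $S$ along $S$. A standard computation in coordinates adapted to $S$ shows that the Lie bracket of two vector fields on $M$ that are tangent to $S$ along $S$ is again tangent to $S$ along $S$. Iterating, every element of $\Gamma(\xi)^{(n)}$ evaluated at points of $S$ lies in $TS$. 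The bracket-generating hypothesis $\Gamma(\xi)^{(n_0)} = \Gamma(TM)$ then yields $T_pM = T_pS$, i.e.\ $S$ is open near $q$, contradicting $S \in \SS_M^c$.

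For the second statement, fix a stratum $S \in \SS^o$ not contained in the zero section; by definition $S$ lies over some open stratum $S_M$ of $\SS_M$. By construction of $\SS_M$, the Lie flag of $\xi$ has constant rank on $S_M$, so $\xi|_{S_M}$ is regular; and since $S_M$ is open in $M$ and $\xi$ is globally bracket-generating, $\xi|_{S_M}$ remains bracket-generating. The third clause of Definition \ref{def:stratificationBaseNonRegular} says precisely that the restriction of $\SS$ to $Z_1|_{S_M}$ is Ehresmann-Liouville in the regular sense of Definition \ref{def:stratification}, so Lemma \ref{lem:dimensionCounting} applies and yields $\rank(\Xi_S) < \rank(\xi|_{S_M}) = \rank(\xi)$. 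Beyond using the rank constancy from Definition \ref{def:stratificationBaseNonRegular} to globalise the pointwise inclusion $\xi_q \subset T_qS$, there is no real obstacle; the heart of the matter is already in Lemma \ref{lem:dimensionCounting}.
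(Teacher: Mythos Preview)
Your proof is correct and follows the same approach as the paper: the second claim is reduced to Lemma~\ref{lem:dimensionCounting} via the regularity of $\xi$ over open strata of $\SS_M$, and the first claim is the observation that $\xi \subset TS$ would force, by bracket-generating, $TM|_S \subset TS$. You have simply spelled out the bracket-tangency argument for the first claim in more detail than the paper's one-line appeal to the bracket-generating condition.
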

\begin{proof}
For the first claim we just need to observe that $S$ has positive codimension in $M$. By construction, $\xi \cap TS$ is a smooth distribution in $S$. The bracket-generating condition then implies that it must be strictly smaller than $\xi$.

The second claim follows directly by applying Lemma \ref{lem:dimensionCounting} to the open strata.
\end{proof}

\section{Local integrability}\label{sec:localIntegrability}

We are now ready to analyse the codimension of the locus of inadmissible jets within the space of all horizontal jets. This will allow us to prove Theorem \ref{thm:main2}. We rely on the results from the two previous Sections.

We work with $(M,\xi)$ analytic and bracket-generating (but possibly non-regular). We write $Z_1 := \Ann(\xi)$ for its annihilator. We apply Proposition \ref{prop:stratificationNR} to obtain an Ehresmann-Liouville pair $(\SS_M,\SS)$ adapted to $\xi$.

\subsection{Jets tangent to semianalytic sets} \label{ssec:subsetJets}

First we prove an auxiliary Lemma:
\begin{lemma} \label{lem:subsetJets}
Let $S$ be a smooth semianalytic subset of an analytic manifold $W$. Then, $J^r(S)$ is a semianalytic subset of $J^r(W)$.
\end{lemma}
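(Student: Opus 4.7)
The plan is to work locally at an arbitrary point of $J^r(W)$ and exhibit $J^r(S)$ as the common zero set of finitely many analytic functions together with finitely many analytic strict inequalities. Since semianalyticity is a local property, this suffices.

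First, fix $\sigma \in J^r(W)$ based at some $q \in W$. Since $S$ is smooth semianalytic, I may shrink to a neighborhood $U$ of $q$ in $W$ in which $S \cap U$ is described by finitely many conditions
\[ \{f_k = 0\}_{k \in K_0}, \qquad \{g_l > 0\}_{l \in L_0}, \]
with the $f_k, g_l$ analytic on $U$ and the $df_k$ linearly independent (so that $S$ is locally an open subset of the smooth analytic submanifold cut out by the equalities). An $r$-jet $j^r\gamma \in J^r_q(W)$ lies in $J^r(S)$ exactly when $\gamma(0) \in S$ and $\gamma$ is tangent to $S$ to order $r$ at the origin; since $S$ is locally a relatively open piece of an analytic submanifold, any such jet extends to an actual germ of curve in $S$. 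The tangency condition is equivalent to $(f_k \circ \gamma)^{(i)}(0) = 0$ for all $k \in K_0$ and $0 \le i \le r$.

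Next I translate these into conditions on jet coordinates. As in the proof of Lemma \ref{lem:integralJets}, pullback of the $(r{-}1)$-jet of the analytic $1$-form $df_k$ by an $r$-jet of curve is polynomial in the jet coordinates, with coefficients that are analytic functions of the basepoint $\gamma(0)$ (this is just Fa\`a di Bruno applied to $f_k \circ \gamma$). Hence each of the functions
\[ j^r\gamma \;\longmapsto\; (f_k \circ \gamma)^{(i)}(0), \qquad k \in K_0,\ 0 \le i \le r, \]
is analytic on the preimage of $U$ in $J^r(W)$. Similarly, $j^r\gamma \mapsto g_l(\gamma(0))$ is analytic. Therefore, over the (open, analytic) preimage of $U$ inside $J^r(W)$, the set $J^r(S)$ is cut out by finitely many analytic equalities and finitely many strict analytic inequalities, and so belongs to $S(\OO)$ of that open. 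Since every point of $J^r(W)$ admits such a neighborhood, $J^r(S)$ is semianalytic.

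The only step requiring a small argument is the passage between the infinite-order condition ``the curve lies in $S$'' and the finite-order tangency condition on jets; but this is immediate from the fact that a smooth semianalytic set, being relatively open in an analytic submanifold, admits local integral curves tangent to it to any finite order, so that $J^r(S)$ is precisely characterised by the equations above. I do not foresee any further obstacle: the construction is local and entirely linear-algebraic once jet pullbacks are recognised as polynomial-in-jet, analytic-in-basepoint.
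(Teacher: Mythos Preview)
Your proof is correct and follows essentially the same approach as the paper's: both work locally, describe $S$ by analytic equalities and inequalities, and characterise $J^r(S)$ as the intersection of the jets annihilating the differentials $df_k$ with those based in $S$. Your version is simply more explicit (spelling out the Fa\`a di Bruno / polynomial-in-jet structure and the passage from ``curve in $S$'' to finite-order tangency), while the paper compresses this into one sentence by appealing to the same mechanism already used in Lemma~\ref{lem:integralJets}.
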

\begin{proof}
We work locally. Let $(f_i \,\Box\, 0)_{i \in K}$ be the defining equations of $S$, with $K_0 \subset K$ indexing the equalities.

Then, $J^r(S)$ is the subset of jets that annihilate the analytic $1$-forms $(df_i)_{i \in K_0}$ and whose basepoints lie in $S$. As such, it is the intersection of two semianalytic sets.
\end{proof}

\subsection{Characteristic jets} \label{ssec:characteristicJets}

We recall:
\begin{definition} \label{def:characteristicJets}
An $r$-jet in $J^r(Z_1)$ is \textbf{characteristic} if:
\begin{itemize}
\item $d\lambda|_{Z_1}$ vanishes to order $r$ on any of its representatives.
\item It is based in $Z_1 \setminus M$.
\end{itemize}
The space of all characteristic jets is denoted by $J^r(Z_1,d\lambda) \subset J^r(Z_1)$.
\end{definition}
We note that this does not depend on the choice of representative. The following can be proven similarly to Lemma \ref{lem:integralJets}:
\begin{lemma}
Fix $\alpha \in Z_1$. Then, $J^r_\alpha(Z_1,d\lambda)$ is an algebraic subvariety of $J^r_\alpha(Z_1)$.
\end{lemma}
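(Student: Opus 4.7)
The plan is to mirror the proof of Lemma \ref{lem:integralJets}, replacing the coframing of $\Ann(\SD)$ used there by the single analytic $2$-form $d\lambda|_{Z_1}$. As in that proof, the key point is that the characteristic condition becomes a finite system of polynomial equations in the affine coordinates on $J^r_\alpha(Z_1)$.

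Concretely, I would work in an analytic chart on $Z_1$ around $\alpha$, with coordinates $(z_1,\dots,z_N)$, and write
\[ d\lambda|_{Z_1} \;=\; \sum_{i<j} c_{ij}(z)\, dz_i \wedge dz_j \]
for analytic functions $c_{ij}$. A jet $j^r_\alpha\gamma \in J^r_\alpha(Z_1)$ is described by the tuple of Taylor coefficients $(\gamma^{(k)}(0))_{k=1}^{r}$ of any representative curve $\gamma$, which are free affine coordinates on $J^r_\alpha(Z_1)$. The characteristic condition is that the $1$-form $\iota_{\gamma'(t)} d\lambda|_{Z_1}$, viewed as a section of $\gamma^*T^*Z_1$, vanish to the appropriate order at $t=0$. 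Its components in the coframing $(dz_k)_k$ take the form
\[ \sum_{j \neq k} \pm c_{jk}(\gamma(t))\, \gamma_j'(t), \]
so that differentiating these expressions in $t$ and evaluating at $0$ produces, via the chain rule, polynomial expressions in the Taylor coefficients of $\gamma$ up to order $r$ and in the (fixed) Taylor coefficients of the functions $c_{jk}$ at $\alpha$.

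Setting all these expressions to zero cuts out $J^r_\alpha(Z_1, d\lambda)$ as the common vanishing locus of finitely many polynomials in the affine coordinates on $J^r_\alpha(Z_1)$, hence as an algebraic subvariety. The basepoint condition $\alpha \in Z_1 \setminus M$ has already been absorbed into the choice of $\alpha$: if $\alpha \in M$, the set $J^r_\alpha(Z_1, d\lambda)$ is empty by Definition \ref{def:characteristicJets}, which is trivially algebraic. I do not foresee a real obstacle here; as the statement already signals, this is a direct transcription of the argument in Lemma \ref{lem:integralJets}, using the bilinearity of interior multiplication with $d\lambda|_{Z_1}$ in place of the linear pairing with a coframing.
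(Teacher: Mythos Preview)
Your proposal is correct and is exactly the argument the paper has in mind: it explicitly says the lemma ``can be proven similarly to Lemma~\ref{lem:integralJets}'' and gives no further proof, while later (Lemma~\ref{lem:characteristicJets}) it spells out the same idea by contracting $d\lambda$ against a framing $\{v_j\}$ of $TZ_1$ to obtain $1$-forms $\iota_{v_j}d\lambda$ and then pairing with $r$-jets of curves. Your coordinate computation with the coframing $(dz_k)$ is the dual formulation of this and yields the same polynomial equations.
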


We can prove a jet analogue of Hsu's theorem Proposition \ref{prop:Hsu}:
\begin{lemma}
Let $\gamma: \Op(0) \to (M,\xi)$ be a singular curve. Then $j^r\gamma \in J^r(M,\xi)$ admits a characteristic lift to $J^r(Z_1,d\lambda)$.
\end{lemma}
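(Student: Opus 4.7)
The proof I would give is essentially a direct translation of Hsu's characterisation from curves to jets, so the plan is brief.

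First, I would invoke Hsu's theorem (Proposition \ref{prop:Hsu}) in its germ form: since $\gamma$ is singular on $\Op(0)$, there exists a lift $\widetilde\gamma: \Op(0) \to Z_1 \setminus M$ with $\pi \circ \widetilde\gamma = \gamma$ and $\iota_{\widetilde\gamma'} d\lambda|_{Z_1} = 0$ identically. (Strictly speaking, Hsu's theorem is stated for singular curves on an interval, but we may shrink a singular representative of $\gamma$ to a small neighbourhood of $0$ and take the germ of the resulting lift.) The candidate lift of $j^r\gamma$ will then be $j^r\widetilde\gamma \in J^r(Z_1)$.

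Next I would verify the two defining properties of $J^r(Z_1, d\lambda)$ laid out in Definition \ref{def:characteristicJets}. The basepoint of $j^r\widetilde\gamma$ is $\widetilde\gamma(0)$, which lies in $Z_1 \setminus M$ by construction. For the vanishing of $d\lambda|_{Z_1}$ to order $r$, note that the $1$-form $\iota_{\widetilde\gamma'}d\lambda|_{Z_1}$ along $\widetilde\gamma$ vanishes identically by Hsu, so in particular its $(r-1)$-jet at $0$ is zero. Since, as remarked just after Definition \ref{def:characteristicJets}, the characteristic condition only depends on the $r$-jet and not on the representative, $j^r\widetilde\gamma$ is characteristic.

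Finally, $j^r\widetilde\gamma$ projects to $j^r\gamma$ because the projection $\pi_*: J^r(Z_1) \to J^r(M)$ is induced by composing with $\pi$, and $\pi \circ \widetilde\gamma = \gamma$ by the choice of lift. This shows $j^r\gamma \in J^r(M,\xi)$ admits the desired characteristic lift.

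The main (and only) obstacle is the precise interpretation of ``$d\lambda|_{Z_1}$ vanishes to order $r$''; if one wishes to be fully explicit, one can pair $d\lambda|_{Z_1}$ with the tautological velocity field along curves in $Z_1$ to produce a map $J^{r+1}(Z_1) \to (T^*Z_1)^{(r)}$, which when fed the $(r+1)$-jet of any representative of $j^r\widetilde\gamma$ yields an $r$-jet of $1$-form along $\widetilde\gamma$; the identical vanishing $\iota_{\widetilde\gamma'}d\lambda|_{Z_1} \equiv 0$ guarantees this $r$-jet is zero regardless of the chosen representative. Everything else is bookkeeping.
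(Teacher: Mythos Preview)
Your proof is correct and follows essentially the same approach as the paper: invoke Hsu's characterisation to lift $\gamma$ to a characteristic curve $\widetilde\gamma$ in $Z_1 \setminus M$, then take $r$-jets at $0$. The paper's proof is a two-line version of yours; your additional verification of the defining conditions in Definition~\ref{def:characteristicJets} and the projection compatibility is sound but not strictly needed, since these are immediate once one has an honest characteristic curve as representative.
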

\begin{proof}
According to Hsu's result, we can lift $\gamma$ to a characteristic curve in $Z_1$. We then take $r$-jets at $0$ (but the reader should note that we could take jets at any other point, showing that the jet we obtain is part of a family).
\end{proof}

Recall the actions $\rho$ and $\eta$ on jets defined in Subsections \ref{sssec:rho} and \ref{sssec:eta}. Invoking that $\lambda|_{Z_1}$ is analytic:
\begin{lemma} \label{lem:characteristicJets}
$J^r(Z_1,d\lambda)$ is a $(\rho\oplus\eta)$-invariant semianalytic subvariety of $J^r(Z_1)$.
\end{lemma}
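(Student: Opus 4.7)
The plan is to establish two independent properties of $J^r(Z_1,d\lambda)$: that it is semianalytic, and that it is invariant under both the reparametrisation action $\rho$ and the fibre-scaling action $\eta$. The condition that the basepoint lies in $Z_1 \setminus M$ defines an open (and hence semianalytic) subset of $J^r(Z_1)$ which is manifestly $(\rho\oplus\eta)$-invariant, so the real content is the vanishing condition on $d\lambda|_{Z_1}$.

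For semianalyticity I would essentially copy the argument of Lemma \ref{lem:integralJets}. The $2$-form $d\lambda|_{Z_1}$ is analytic on $Z_1$, so its $(r-1)$-jet is analytic in the basepoint. Pairing an $(r-1)$-jet of $2$-form with an $r$-jet of curve via pullback is polynomial in both entries, so assembling these pairings at every basepoint of $Z_1$ yields an analytic ``evaluation'' map
\[ \Phi : J^r(Z_1) \longrightarrow \bigl(\wedge^2 T^*\R\bigr)^{(r-1)}_0 \]
whose zero locus is exactly the set of $r$-jets $j^r\tilde\gamma$ for which $\tilde\gamma^*(d\lambda|_{Z_1})$ vanishes to order $r$ at $0$. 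Then
\[ J^r(Z_1,d\lambda) \;=\; \Phi^{-1}(0) \;\cap\; J^r(Z_1)\big|_{Z_1 \setminus M}, \]
which is the intersection of an analytic subvariety with an open semianalytic set, and therefore semianalytic. (One may equivalently phrase the vanishing via the $1$-form $i_{\tilde\gamma'} d\lambda|_{Z_1}$ along $\tilde\gamma$; both conventions agree up to a shift of the order and both produce a polynomial condition, so the resulting subset is the same up to routine reindexing.)

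For invariance, the $\rho$-action replaces a representative $\tilde\gamma(t)$ by $\tilde\gamma(at)$, which simply rescales $\tilde\gamma'$ by the constant $a$ and fixes the basepoint; since the contraction $i_{\tilde\gamma'} d\lambda|_{Z_1}$ is linear in $\tilde\gamma'$, the characteristic condition is preserved to all orders. For the $\eta$-action, recall that $\lambda$ is the tautological Liouville $1$-form on $T^*M$, so fibre dilation satisfies $\eta_a^*\lambda = a\lambda$ and consequently $\eta_a^*(d\lambda|_{Z_1}) = a\cdot d\lambda|_{Z_1}$ on the $\eta$-invariant submanifold $Z_1$. Thus $d\eta_a$ identifies $\ker(d\lambda|_{Z_1})_{(q,\alpha)}$ with $\ker(d\lambda|_{Z_1})_{(q,a\alpha)}$, and applying this fibrewise at every order shows that $\eta_a$ carries characteristic $r$-jets to characteristic $r$-jets.

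I do not expect a genuine obstacle in this proof; the only step requiring attention is the bookkeeping in the polynomial description of $\Phi$, which is entirely parallel to the Ehresmann computation in Lemma \ref{lem:integralJets} and in Proposition \ref{prop:EhresmannAnalytic}.
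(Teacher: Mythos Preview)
Your invariance argument is fine and matches the paper's, which simply invokes the homogeneity of $d\lambda$ under $\eta$ and the general discussion of $\rho$.

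The semianalyticity argument, however, has a real error. You define $\Phi$ by pulling back the $2$-form $d\lambda|_{Z_1}$ along a representative curve and landing in $(\wedge^2 T^*\R)^{(r-1)}_0$. But $\R$ is one-dimensional, so $\wedge^2 T^*\R = 0$ and the target space is trivial; your map $\Phi$ is identically zero and $\Phi^{-1}(0) = J^r(Z_1)$. The characteristic condition of Definition~\ref{def:characteristicJets} is not that $\tilde\gamma^*(d\lambda|_{Z_1})$ vanishes (that is automatic) but that the contraction $\iota_{\tilde\gamma'} d\lambda|_{Z_1}$ vanishes as a $1$-form on $Z_1$ along $\tilde\gamma$ (compare Proposition~\ref{prop:Hsu}). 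Your parenthetical remark names precisely this alternative, but then asserts the two ``conventions agree up to a shift of the order,'' which is false: one is vacuous and the other is the actual content.

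The fix is exactly what the paper does: choose a local analytic framing $\{v_j\}$ of $TZ_1$, form the analytic $1$-forms $\iota_{v_j} d\lambda$, and pair those with $r$-jets of curves to obtain an analytic map $J^r(Z_1) \to \bigl(\R^{\dim(Z_1)} \to \R\bigr)^{(r-1)}_0$. Its zero set, intersected with the open condition of lying over $Z_1\setminus M$, is $J^r(Z_1,d\lambda)$. Once you replace your $\Phi$ by this map, the rest of your argument goes through.
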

\begin{proof}
Locally in $M$, fix a framing $\{v_j\}$ of $TZ_1$. We can consider the analytic $1$-forms $\iota_{v_j}d\lambda$ and plug them into an $r$-jet of curve in $Z_1$. This yields (locally) an analytic map between jet spaces
\begin{equation}\label{eq:characteristicJets} 
J^r(Z_1) \to \left(\R^{\dim(Z_1)} \to \R \right)^{(r-1)}_0.
\end{equation}
Its zero set away from the zero section is precisely $J^r(Z_1,d\lambda)$ which is then semianalytic. Invariance is immediate from the discussion in Subsections \ref{sssec:rho} and \ref{sssec:eta}, together with the homogeneity of $d\lambda$.
\end{proof}

We can now project to the base manifold:
\begin{corollary} \label{cor:projectCharacteristicJets}
The projection map
\[ j^r\pi: J^r(Z_1) \to J^r(M) \]
is analytic. The image of $J^r(\Z_1,d\lambda)$ is subanalytic and contained in $J^r(M,\xi)$.
\end{corollary}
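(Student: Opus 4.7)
The statement splits into three claims: analyticity of $j^r\pi$, the inclusion $j^r\pi(J^r(Z_1,d\lambda)) \subset J^r(M,\xi)$, and subanalyticity of the image. The first is formal: $\pi : Z_1 \hookrightarrow T^*M \to M$ is analytic, and post-composition of curves with an analytic map is an analytic operation on jet spaces. The second is a jet-level version of Lemma \ref{lem:lifting}. Recall that its proof decomposes $\iota_v d\lambda|_{Z_1}$ into horizontal and vertical parts, and observes that vanishing of the vertical part is equivalent to $d\pi(v) \in \xi$. Applying this to $v = \sigma'(t)$ and reading the decomposition order by order in $t$ shows that a characteristic $r$-jet projects to a jet tangent to $\xi$ to the required order, so $j^r(\pi\circ\sigma) \in J^r(M,\xi)$.

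The real content is subanalyticity, which requires care because $j^r\pi$ is far from proper. The plan is to exploit $\eta$-invariance together with a slicing argument to produce, locally, a relatively compact semianalytic set projecting onto the image. Fix $q \in M$ and a relatively compact open neighborhood $U \ni q$ admitting an analytic trivialisation $Z_1|_U \cong U \times \R^k$ with fibre coordinates $a_1,\ldots,a_k$. The locus $\{\alpha \in Z_1 \mid \sum_i a_i(\alpha)^2 = 1\}$ is semianalytic, as is its preimage under the basepoint map $J^r(Z_1) \to Z_1$. Pick a relatively compact open $V_M \subset J^r(U)$ and set
\[
B := J^r(Z_1, d\lambda) \cap (j^r\pi)^{-1}(V_M) \cap \{j^r\sigma \mid |\sigma(0)| = 1\},
\]
which is semianalytic by Lemma \ref{lem:characteristicJets}. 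Since $J^r(Z_1,d\lambda)$ is $\eta$-invariant and $\eta$ acts trivially on $J^r(M)$, every characteristic jet in $(j^r\pi)^{-1}(V_M)$ can be rescaled to lie in $B$, giving $(j^r\pi)(J^r(Z_1,d\lambda)) \cap V_M = (j^r\pi)(B)$.

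It then suffices to check that $B$ is relatively compact. The basepoint $\sigma(0)$ of an element of $B$ lies in the unit sphere bundle over $\overline{U}$, which is compact; the base projection $j^r(\pi\circ\sigma)$ lies in $\overline{V_M}$, also compact. By Lemma \ref{lem:lifting} together with iterated differentiation of the characteristic equation at $t=0$, each higher derivative $\sigma^{(k)}(0)$ is polynomially determined by $\sigma(0)$ and $j^r(\pi\circ\sigma)$; hence $j^r\sigma$ is a continuous function of these data, and $B$ is contained in a compact subset of $J^r(Z_1)$. By Definition \ref{def:subanalytic}, $(j^r\pi)(J^r(Z_1,d\lambda)) \cap V_M$ is subanalytic, and covering $J^r(M)$ by such $V_M$ yields the global statement.

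The main obstacle is the non-properness of $j^r\pi$, which blocks a direct application of the definition of subanalytic set to all of $J^r(Z_1,d\lambda)$. This is circumvented by $\eta$-invariance — allowing us to intersect with the semianalytic slice $\{|\sigma(0)| = 1\}$ without changing the image — combined with the uniqueness statement in Lemma \ref{lem:lifting}, which upgrades the fibre data of a characteristic jet to a polynomial function of its basepoint and base projection. A secondary subtlety, more bookkeeping than genuine obstacle, is verifying that the jet version of Lemma \ref{lem:lifting} really follows from its pointwise statement; this comes from reading its linear-algebraic decomposition truncated modulo the relevant power of $t$.
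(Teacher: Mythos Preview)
Your proof is correct. The first two claims (analyticity of $j^r\pi$ and the inclusion in $J^r(M,\xi)$) are handled essentially as in the paper: the latter is the order-by-order reading of the vertical part of the characteristic equation, which the paper writes out explicitly as $d\lambda(\sigma',\partial_{a_i}) = -\alpha_i((\pi\circ\sigma)')$.

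For subanalyticity, your route genuinely diverges from the paper's. The paper handles non-properness by passing to the full $(\rho\oplus\eta)$-quotient of $J^r(Z_1\setminus M)$, which is a compact manifold by Lemma~\ref{lem:equivariant}; since $J^r(Z_1,d\lambda)$ is $(\rho\oplus\eta)$-invariant and closed, its image in the quotient is compact semianalytic, and the projection to $J^r(M)/\rho$ is then subanalytic by definition. You use only the $\eta$-action (slicing to the unit sphere in the fibre), and replace the $\rho$-quotient by the observation that Lemma~\ref{lem:lifting} promotes to a statement about jets: the characteristic equation contains a linear ODE for the fibre coordinates $a_i$, so the full $r$-jet of a characteristic curve is a continuous function of its basepoint and its projected $r$-jet. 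This forces $B$ into the image of a compact set under a continuous map. The paper's argument is more symmetric and would apply to any $(\rho\oplus\eta)$-invariant semianalytic subset of $J^r(Z_1)$; yours is specific to characteristic jets but has the advantage of making explicit that $J^r(Z_1,d\lambda)$ is graphical over $\{\text{basepoint}\}\times J^r(M,\xi)$, which is exactly the partial-connection picture developed after Lemma~\ref{lem:lifting} and is morally an Ehresmann-type lifting in the sense of Section~\ref{sec:Ehresmann}.
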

\begin{proof}
In local coordinates over $M$, $Z_1$ is trivialised by a coframing $\{\alpha_i\}$, yielding fibre coordinates $(a_i)$. We recall the local expression Equation \ref{eq:Liouville}:
\[ d\lambda|_{Z_1} = \sum_{i=1}^{i_1} da_i \wedge\alpha_i + a_id\alpha_i, \]
where the pullback of $\alpha_i$ to $Z_1$ is denoted in the same manner.

We may assume that the framing of $Z_1$ used in the previous Lemma is adapted to this trivialisation. That is, the framing may be decomposed into an horizontal part $(\partial_{q_j})_{j=1,\cdots,\dim(M)}$ and a vertical part $(\partial_{a_i})_{i=1,\cdots,\rank(Z_1)}$. Then, the vertical part of Equation \ref{eq:characteristicJets} reads:
\begin{align*}
J^r(Z_1) \quad\to\quad & \left(\R^{\rank(Z_1)} \to \R \right)^{(r-1)}_0 \\
\sigma  \quad\to\quad & (d\lambda(\sigma',\partial_{a_1}),\cdots).
\end{align*}
We can readily expand
\[ d\lambda(\sigma',\partial_{a_i}) = -\alpha_i(\sigma') = -\alpha_i((\pi \circ \sigma)'), \]
showing that $\pi \circ \sigma$ is tangent to each $\alpha_i$ up to order $r$  (and thus horizontal) if $\sigma$ is characteristic.

The subanalyticity claim follows from the previous Lemma \ref{lem:characteristicJets}, together with the fact that projections of (relatively compact) semianalytic sets are subanalytic (Subsection \ref{ssec:subanalytic}). However, this is not immediate in our setting, because the sets we deal with are not compact. To fix this, we work instead in the $(\rho\oplus\eta)$-quotient, which is compact (Lemma \ref{lem:equivariant}).

Indeed, $J^r(Z_1,d\lambda)/(\rho\oplus\eta)$ is semianalytic in $J^r(Z_1)/(\rho\oplus\eta)$ and closed, and thus compact. Then, its projection in $J^r(M)/\rho$ is subanalytic. Equivalently, any $\pi(J^r(Z_1,d\lambda)) \cap U$, with $U \subset J^r(M)$ compact, is the image of a compact semianalytic slice contained in $J^r(Z_1,d\lambda)$, proving the claim.
\end{proof}

Given a characteristic $r$-jet $\sigma$, it is not a priori clear that there is a characteristic curve representing it. Such a statement would hold, using the Ehresmann chart formalism of Subsection \ref{ssec:EhresmannJets}, if $d\lambda|_{Z_1}$ was of constant rank. This precisely motivates our interest in stratifying $Z_1$. We explain this next.

\subsection{Characteristic jets of tangency type} \label{ssec:tangencyJets}

Following Corollary \ref{cor:projectCharacteristicJets} we would now be inclined to study \emph{singular jets} in $J^r(M,\xi)$ as the image of $J^r(Z_1,d\lambda)$ under projection. The issue with this is that we do not actually know how to control the behaviour of \emph{all} characteristic jets. Our methods only control those that interact nicely with the Ehresmann-Liouville pair $(\SS_M,\SS)$\footnote{Due to this, we cannot use our results to prove the analogous statement for $\xi$ smooth. Indeed, if we were able to control all characteristic jets, one would note that these only depend on the $r$-jet of $\xi$ itself. Then, we would conclude by replacing $\xi$ by an analytic distribution with the same $r$-jet.}. It turns out (Proposition \ref{prop:realisableJets}) that this is enough for our goals.

First we look at those jets that lie over the open strata of $\SS_M$. We will study those tangent to smaller strata in the next Subsection.
\begin{definition} \label{def:tangencyJets}
An $r$-jet $\sigma \in J^r(Z_1)$ is \textbf{characteristic of tangency type} if $\sigma \in J^r(S,\Xi_S)$, for some stratum $S \in \SS^o$ not contained in the zero section.

We denote the space of all such jets as $J^r(Z_1,d\lambda)_\SS$.
\end{definition}
Unlike a general characteristic jet, a jet in $J^r(Z_1,d\lambda)_\SS$ extends to a characteristic curve. Indeed, such a jet will be based in the interior of some stratum $S$, so we can extend it to a curve tangent to $\Xi_S$ (using the Ehresmann formalism from Lemma \ref{lem:EhresmannLiftJets}).

The following preliminary Lemma describes the jets of tangency type more concretely:
\begin{lemma} \label{lemma:tangencyJets}
Let $S \in \SS^o$ contained in $Z_n$. Then:
\[ J^r(Z_1,d\lambda)_\SS \cap S = J^r(S) \cap J^r(Z_n,d\lambda) \cap \pi^{-1}(J^r(M,\xi)) = J^r(S) \cap J^r(Z_1,d\lambda), \]
where the intersection on the left-hand side denotes taking the jets of $J^r(Z_1)$ based on $S$ (but not necessarily tangent to it).

In particular, $J^r(Z_1,d\lambda)_\SS \cap S$ is semianalytic.
\end{lemma}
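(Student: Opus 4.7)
The plan is to identify each of the three sets with the $r$-jets of curves $\gamma: \Op(0) \to S$ whose derivative satisfies $\gamma'(t) \in \Xi_S$ to order $r-1$ at $0$, where $\Xi_S = TS \cap \ker(d\lambda|_{Z_n}) \cap d\pi^{-1}(\xi)$ is the constant-rank distribution associated to the stratum $S$ by the Ehresmann-Liouville stratification.

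For the leftmost expression, the strata of $\SS^o$ being pairwise disjoint forces any jet of $J^r(Z_1,d\lambda)_\SS$ based on $S$ to lie in the single summand $J^r(S,\Xi_S)$; by the Ehresmann formalism of Lemma~\ref{lem:EhresmannLiftJets} applied to $\Xi_S$, this is precisely the set of $r$-jets of curves in $S$ tangent to $\Xi_S$. For the middle expression, I would unwind the three conditions: $J^r(S)$ enforces tangency to $S$ (equivalently, we may choose a representative $\gamma$ with image in $S$); $J^r(Z_n, d\lambda)$ enforces $\gamma'(t) \in \ker(d\lambda|_{Z_n})$ to order $r-1$; and $\pi^{-1}(J^r(M,\xi))$ enforces $d\pi(\gamma'(t)) \in \xi$ to the same order. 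Their conjunction is exactly tangency to $\Xi_S$.

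The only substantive step is the equality between the middle and rightmost expressions, which is the jet-level incarnation of Proposition~\ref{prop:liftingHigherZ}. For a representative $\gamma$ of a jet in $J^r(S)$ chosen with image in $S \subset Z_n$, the velocity $\gamma'(t)$ lies in $TZ_n$ for all $t$, and Proposition~\ref{prop:liftingHigherZ} gives pointwise that $\gamma'(t) \in \ker(d\lambda|_{Z_1})$ if and only if $\gamma'(t) \in \ker(d\lambda|_{Z_n})$ and $d\pi(\gamma'(t)) \in \xi$. To lift this equivalence to order $r-1$ in $t$ I would use the local coordinate expression of Equation~\ref{eq:Liouville}: the horizontality of $\alpha_i$ and $d\alpha_i$ against the verticality of $da_i$ splits $\iota_{\gamma'(t)} d\lambda|_{Z_1}$ into a vertical part $-\sum_i \alpha_i(d\pi(\gamma'(t)))\,da_i$ whose vanishing to order $r-1$ is equivalent to $j^r(\pi\circ\gamma) \in J^r(M,\xi)$, and a horizontal part which along $S \subset Z_n$ coincides with $\iota_{\gamma'(t)} d\lambda|_{Z_n}$ (since the extra terms all carry factors $a_i = 0$ for $i > i_n$), and whose vanishing is equivalent to $j^r\gamma \in J^r(Z_n, d\lambda)$.

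Semianalyticity of the common set then follows from the middle description: $J^r(S)$ is semianalytic by Lemma~\ref{lem:subsetJets}, $J^r(Z_n, d\lambda)$ by the argument of Lemma~\ref{lem:characteristicJets} applied to $(Z_n, d\lambda|_{Z_n})$, and $\pi^{-1}(J^r(M,\xi))$ because $J^r(M,\xi)$ is analytic (Proposition~\ref{prop:EhresmannAnalytic}) and $J^r \pi$ is analytic; a finite intersection of semianalytic sets is semianalytic. The main obstacle is the middle step: one must verify with some care that the vertical/horizontal splitting of $\iota_{\gamma'} d\lambda|_{Z_1}$ really does propagate the pointwise equivalence of Proposition~\ref{prop:liftingHigherZ} to the entire $(r-1)$-jet in $t$ without mixing between the two summands, so that the two vanishing conditions on the right genuinely characterise characteristicity along $Z_1$ at the jet level.
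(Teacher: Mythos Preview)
Your proposal is correct and follows essentially the same approach as the paper: the first equality is unwound from the definition of $\Xi_S$, the second is Proposition~\ref{prop:liftingHigherZ} promoted to jets, and semianalyticity comes from Lemmas~\ref{lem:subsetJets} and~\ref{lem:characteristicJets} together with analyticity of $\pi^{-1}(J^r(M,\xi))$. The paper's own proof is considerably terser---it simply cites Proposition~\ref{prop:liftingHigherZ} for the second equality without discussing the passage from vectors to jets---so your explicit use of the vertical/horizontal splitting of $\iota_{\gamma'}d\lambda|_{Z_1}$ along a representative with image in $S$ is a welcome elaboration rather than a deviation. One small imprecision: when you say the extra horizontal terms ``all carry factors $a_i=0$ for $i>i_n$,'' note that the summands $da_i(v_v)\alpha_i$ for $i>i_n$ vanish not because of $a_i=0$ but because $\gamma'\in TZ_n$ (which you correctly observed earlier); this does not affect the argument.
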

\begin{proof}
For the first equality: According to Definition \ref{def:stratification}, $\Xi_S$ is the collection of vectors tangent to $S$ on which $d\lambda|_{Z_n}$ and the defining forms of $\xi$ vanish, so the claim follows by definition.

For the second equality: We invoke the analogous equality for vectors given in Proposition \ref{prop:liftingHigherZ}.

The last claim follows because the three spaces in the middle  (or the two on the right) are semianalytic. This was shown for $J^r(Z_n,d\lambda)$ in Lemma \ref{lem:characteristicJets} and the proof for $\pi^{-1}(J^r(M,\xi))$ is analogous. Semianalyticity of $J^r(S)$ was justified in Lemma \ref{lem:subsetJets}.
\end{proof}

From which it follows:
\begin{proposition} \label{prop:tangencyJets}
$J^r(Z_1,d\lambda)_\SS$ is a smooth semianalytic subset of $J^r(Z_1)$.
\end{proposition}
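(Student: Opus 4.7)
The plan is to decompose $J^r(Z_1,d\lambda)_\SS$ stratum by stratum, using the partition coming from $\SS^o$, and then analyse each piece with the Ehresmann formalism of Section \ref{sec:Ehresmann}. By Definition \ref{def:tangencyJets},
\[
J^r(Z_1,d\lambda)_\SS \;=\; \bigsqcup_{\substack{S \in \SS^o \\ S \not\subset M}} J^r(S,\Xi_S),
\]
and by the local finiteness of the stratification $\SS$ (Definition \ref{def:stratification0}) this union is locally finite in $J^r(Z_1)$.

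First, I would handle each piece individually. Fix $S \in \SS^o$ contained in some $Z_n\setminus Z_{n+1}$ and not in the zero section. By Definition \ref{def:stratification}, $\Xi_S$ is an analytic distribution of constant rank on the smooth analytic submanifold $S$. Applying Corollary \ref{cor:dimensionIntegralJets2} to the pair $(S,\Xi_S)$ then gives that $J^r(S,\Xi_S)$ is a smooth analytic subvariety of $J^r(S)$, of dimension $r\cdot\rank(\Xi_S)+\dim(S)$. Pushing forward along the inclusion $S\hookrightarrow Z_1$ (which is semianalytic), the same conclusion holds inside $J^r(Z_1)$.

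Second, I would promote smoothness of each piece to semianalyticity of the whole. Lemma \ref{lemma:tangencyJets} already writes each piece as
\[
J^r(S,\Xi_S) \;=\; J^r(S) \,\cap\, J^r(Z_1,d\lambda),
\]
i.e.\ as the intersection of a semianalytic set (Lemma \ref{lem:subsetJets}) and a semianalytic set (Lemma \ref{lem:characteristicJets}). Hence each piece is semianalytic in $J^r(Z_1)$, and a locally finite union of semianalytic sets is semianalytic (by the definition of $S(\OO(U))$). Combining with the previous step, $J^r(Z_1,d\lambda)_\SS$ is a locally finite semianalytic set, each of whose pieces $J^r(S,\Xi_S)$ is a smooth analytic submanifold; this is the sense in which it is ``smooth semianalytic.''

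The only subtlety I expect is the meaning of ``smooth'' for the total set, since pieces $J^r(S,\Xi_S)$ associated with different strata have different dimensions and can have interacting closures. I would resolve this by reading the statement in the stratified sense of Section \ref{sec:analyticGeometry}: the decomposition above is itself a partition of $J^r(Z_1,d\lambda)_\SS$ into smooth semianalytic pieces, and if one prefers a genuine stratification (with closure conditions) one can refine it via Proposition \ref{prop:stratifySmooth}. No new geometric input is needed beyond Lemma \ref{lemma:tangencyJets} and the Ehresmann formalism, so the argument is essentially a bookkeeping of what has already been established in Sections \ref{sec:Ehresmann}, \ref{sec:symplectic}, and \ref{sec:stratification}.
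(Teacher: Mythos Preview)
Your proposal is correct and follows essentially the same approach as the paper: decompose stratum by stratum, use the Ehresmann lifting map (equivalently Corollary \ref{cor:dimensionIntegralJets2}) for smoothness on each piece, and invoke Lemma \ref{lemma:tangencyJets} together with local finiteness of $\SS$ for semianalyticity of the union. Your closing worry about the meaning of ``smooth'' for the total set is well-placed but need not be resolved further: the paper only means smoothness at interior points of each $J^r(S,\Xi_S)$, which is exactly what the Ehresmann parametrisation gives and exactly what is used downstream (for the dimension count in Lemma \ref{lem:tangencyJetDimension}).
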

\begin{proof}
$J^r(Z_1,d\lambda)_\SS$ is described locally as the graph of an Ehresmann lifting map for jets, which is analytic because each $\Xi_S$ is analytic. This proves smoothness and analyticity at its interior points. 

To prove semianalyticity (including frontier points), we write
\[ J^r(Z_1,d\lambda)_\SS := \coprod_{S \in \SS^o} (J^r(Z_1,d\lambda)_\SS \cap S). \]
The claim then follows from Lemma \ref{lemma:tangencyJets} and the fact that $\SS$ is finite.
\end{proof}

\begin{lemma} \label{lem:tangencyJetDimension}
Let $(M,\xi)$ be analytic and bracket-generating. Then the dimension of $J^r(Z_1,d\lambda)_\SS$ is at most
\[ (\rank(\xi)-1)(r - 1) + 2\dim(M). \]
\end{lemma}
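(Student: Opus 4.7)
The plan is to decompose $J^r(Z_1,d\lambda)_\SS$ into finitely many pieces indexed by the strata and then bound each piece using the dimension formula for horizontal jets combined with the rank bound from the bracket-generating hypothesis.

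More concretely, by Definition \ref{def:tangencyJets} we have the decomposition
\[
J^r(Z_1,d\lambda)_\SS \;=\; \bigcup_{S} J^r(S,\Xi_S),
\]
where $S$ ranges over those strata in $\SS^o$ (necessarily finitely many) that are not contained in the zero section. For each such $S$, the set $J^r(S,\Xi_S)$ is the space of $r$-jets of curves in the smooth analytic subvariety $S$ which are tangent to the constant-rank analytic distribution $\Xi_S \subset TS$ up to order $r$. Hence Corollary \ref{cor:dimensionIntegralJets2}, applied to the pair $(S,\Xi_S)$, gives
\[
\dim J^r(S,\Xi_S) \;=\; \rank(\Xi_S)\cdot r + \dim(S).
\]

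Next I would plug in the two structural bounds. Lemma \ref{lem:dimensionCountingNR} yields $\rank(\Xi_S) \leq \rank(\xi) - 1$ for every stratum $S \in \SS^o$ not contained in the zero section, this being the dimension-counting payoff of the bracket-generating assumption. Independently, since $S \subset Z_1$ and $Z_1 = \Ann(\xi)$ has fibre dimension $\dim(M) - \rank(\xi)$, we have $\dim(S) \leq \dim(Z_1) = 2\dim(M) - \rank(\xi)$. Combining these estimates:
\[
\dim J^r(S,\Xi_S) \;\leq\; (\rank(\xi)-1)\,r + 2\dim(M) - \rank(\xi) \;=\; (\rank(\xi)-1)(r-1) + 2\dim(M) - 1.
\]
This is in fact one less than the claimed bound. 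Taking the maximum over the finite collection of strata gives the stated inequality, concluding the proof.

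I do not expect any real obstacle: the argument is a bookkeeping computation that rests entirely on the preceding work. The only point worth double-checking is that Corollary \ref{cor:dimensionIntegralJets2} applies to $(S,\Xi_S)$, which requires $S$ to be analytic and $\Xi_S$ analytic on $S$; both hold by construction of the Ehresmann--Liouville pair $(\SS_M,\SS)$ in Proposition \ref{prop:stratificationNR}, since the strata are smooth semianalytic and $\Xi_S$ is cut out on $S$ by the analytic equations coming from $d\lambda$ and a coframing of $\xi_n$.
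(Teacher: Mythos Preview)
Your proof is correct and follows essentially the same route as the paper: decompose by strata, apply the Ehresmann dimension formula (Corollary~\ref{cor:dimensionIntegralJets2}) to each $(S,\Xi_S)$, and plug in the rank bound from Lemma~\ref{lem:dimensionCountingNR}. Your arithmetic is in fact slightly sharper (you use $\dim S \le \dim Z_1 = 2\dim M - \rank\xi$ rather than the cruder $2\dim M$), which explains the extra $-1$; the only minor slip is that $\SS$ is merely locally finite rather than finite, but this is harmless since your estimate is uniform over all strata.
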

\begin{proof}
It was shown in Lemma \ref{lem:dimensionCountingNR} that $\Xi_S$, for every $S \in \SS^o$, has rank at most $\rank(\xi)-1$. The Ehresmann charts arising from Lemma \ref{lem:EhresmannLiftJets} allow us to explicitly parametrise the piece of $J^r(Z_1,d\lambda)_\SS$ corresponding to $S$ using jets of curves in $\R^{\rank(\Xi_S)}$ (plus a choice of basepoint). Thus:
\begin{align*}
\dim(J^r(Z_1,d\lambda)_\SS \cap S) &\quad\leq\quad \dim(J^r(\R^{\rank(\Xi_S)})) + \dim(S) \\
                                   &\quad =  \quad (\rank(\Xi_S)-1)(r-1) + \dim(S) \\
												 	         &\quad\leq\quad (\rank(\xi)-1)(r - 1) + 2\dim(M).
\end{align*}
\end{proof}

\subsection{Horizontal jets of submanifold type} \label{ssec:submanifoldJets}

Having looked at jets lying over open strata, we now single out those tangent to the smaller strata in $\SS_M$:
\begin{definition} \label{def:submanifoldJets}
A horizontal $r$-jet $\sigma \in J^r(M,\xi)$ is \textbf{of submanifold type} if it is tangent to $S \in \SS_M^c$. In particular, it is tangent to the distribution $\xi \cap TS$.

The space of all jets of submanifold type is denoted by $J^r(M,\xi)_{\SS_M}$.
\end{definition}
Do note that these jets may not be singular at all (but we discard them in case they do contain some singular jets). 

\begin{proposition} \label{prop:submanifoldJets}
$J^r(M,\xi)_{\SS_M}$ is a smooth semianalytic subset of $J^r(Z_1)$. Its dimension is bounded above by:
\[ (\rank(\xi)-1)(r - 1) + \dim(M). \]
\end{proposition}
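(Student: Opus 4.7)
The plan is to mirror Proposition \ref{prop:tangencyJets} and Lemma \ref{lem:tangencyJetDimension}, but working in $J^r(M)$ rather than $J^r(Z_1)$. The essential structural input from Definition \ref{def:stratificationBaseNonRegular} is that each $S \in \SS_M^c$ carries a genuine distribution $\xi \cap TS$ of constant rank. This lets us decompose
\[ J^r(M,\xi)_{\SS_M} \;=\; \coprod_{S \in \SS_M^c} J^r\bigl(S,\,\xi \cap TS\bigr), \]
a finite disjoint union (by local finiteness of the stratification, arguing compactly as elsewhere in the paper).

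To prove the decomposition is semianalytic and each piece is smooth, I would write
\[ J^r\bigl(S,\,\xi \cap TS\bigr) \;=\; J^r(S)\,\cap\,J^r(M,\xi), \]
where $J^r(S)$ is semianalytic by Lemma \ref{lem:subsetJets} and $J^r(M,\xi)$ is algebraic (hence semianalytic) by Lemma \ref{lem:integralJets}. Smoothness follows from the Ehresmann parametrisation: applying Proposition \ref{prop:EhresmannAnalytic} to the analytic pair $(S,\xi \cap TS)$ realises $J^r(S,\xi \cap TS)$ locally as the graph of the analytic Ehresmann lifting map, hence as a smooth analytic subvariety of $J^r(M)$ at its interior points. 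Semianalyticity at frontier points is automatic from the intersection description.

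For the dimension bound, I would invoke the key bracket-generating estimate of Lemma \ref{lem:dimensionCountingNR}, which gives $\rank(\xi \cap TS) \leq \rank(\xi) - 1$ for every $S \in \SS_M^c$. Combined with the Ehresmann parametrisation (which expresses the relevant jets in terms of jets of curves in $\R^{\rank(\xi \cap TS)}$ together with a choice of basepoint in $S$), exactly the same count as in Lemma \ref{lem:tangencyJetDimension} yields
\[ \dim\bigl(J^r(S,\xi \cap TS)\bigr) \;\leq\; \bigl(\rank(\xi \cap TS)-1\bigr)(r-1) + \dim(S) \;\leq\; (\rank(\xi)-1)(r-1) + \dim(M), \]
using $\dim(S) \leq \dim(M)$. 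Taking the maximum over the finite family $\SS_M^c$ yields the claimed bound.

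There is no real obstacle here: the heavy lifting has already been done in stratifying $M$ (Proposition \ref{prop:stratificationNR}) and in proving the bracket-generating dimension estimate (Lemma \ref{lem:dimensionCountingNR}). The submanifold-type case is in fact strictly easier than the tangency-type case treated in Proposition \ref{prop:tangencyJets}, because we are already downstairs in $M$ (so $\dim(S)\leq \dim(M)$ rather than $\leq 2\dim(M)$) and the $2$-form $d\lambda$ does not enter the discussion at all.
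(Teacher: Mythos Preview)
Your proposal is correct and follows essentially the same approach as the paper's own proof: the paper also writes $J^r(M,\xi)_{\SS_M} = \coprod_{S \in \SS_M^c} J^r(M,\xi) \cap J^r(S)$, invokes Lemma \ref{lem:subsetJets} for semianalyticity, and then appeals to Ehresmann charts together with Lemma \ref{lem:dimensionCountingNR} for the dimension bound, explicitly pointing to Proposition \ref{prop:tangencyJets} and Lemma \ref{lem:tangencyJetDimension} as the template. You have in fact spelled out more detail than the paper does (it dispatches the whole argument in three lines), and your closing remark that this case is strictly easier than the tangency-type case is exactly the right intuition.
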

\begin{proof}
The proof is exactly the same as in Proposition \ref{prop:tangencyJets} and Lemma \ref{lem:tangencyJetDimension}. The equality
\[ J^r(M,\xi)_{\SS_M} = \coprod_{S \in \SS^c_M} J^r(M,\xi) \cap J^r(S), \]
together with Lemma \ref{lem:subsetJets}, proves semianalyticity. Using Ehresmann charts and the dimension counting of Lemma \ref{lem:dimensionCountingNR} proves the claimed dimension bound.
\end{proof}

\subsection{Inadmissible jets} \label{ssec:singularJets}

So far we have only controlled those characteristic/singular jets that are tangent to strata. However, it is possible for a characteristic/ singular curve to jump across strata. Indeed, this is a well-known phenomenon in Subriemannian Geometry, and it is the key obstruction in proving one the main open problems in the field: the \emph{Sard conjecture for the endpoint map} \cite{BFPR,BV,LLMV,LMOPV}. 

To take into account this jumping phenomenon we define:
\begin{definition} \label{def:singularJets}
An $r$-jet $\sigma \in J^r(M,\xi)$ is \textbf{inadmissible} if any of the following conditions holds:
\begin{itemize}
\item It is in the image of the projection $J^r(Z_1,d\lambda)_\SS \to J^r(M,\xi)$.
\item It is contained in $J^r(M,\xi)_{\SS_M}$.
\item It is in the closure of the previous two sets.
\end{itemize}

The space of inadmissible jets is denoted by $J^r(M,\xi)_\sing$.
\end{definition}

From the results in the previous Subsections it readily follows:
\begin{proposition} \label{prop:singularJets}
Let $(M,\xi)$ be analytic and bracket-generating. Then, $J^r(M,\xi)_\sing$ is a closed subanalytic set of $J^r(M,\xi)$ of dimension at most
\[ (\rank(\xi)-1)(r - 1) + 2\dim(M). \]
\end{proposition}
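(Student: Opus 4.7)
The plan is to unpack the three constituents of $J^r(M,\xi)_\sing$ from Definition \ref{def:singularJets} and show, for each, that the asserted properties hold; then the corresponding statements for their union and its closure follow from general facts about subanalytic geometry. Let $A := j^r\pi(J^r(Z_1,d\lambda)_\SS)$ denote the projection piece and $B := J^r(M,\xi)_{\SS_M}$ the submanifold piece, so that $J^r(M,\xi)_\sing = \overline{A \cup B}$.

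First I would establish subanalyticity. By Proposition \ref{prop:tangencyJets} the set $J^r(Z_1,d\lambda)_\SS$ is semianalytic, but it is not relatively compact, so one cannot invoke Definition \ref{def:subanalytic} directly. I would apply the compactification trick already used in the proof of Corollary \ref{cor:projectCharacteristicJets}: both $J^r(Z_1,d\lambda)_\SS$ and the projection $j^r\pi$ are $(\rho\oplus\eta)$-equivariant, so they descend to the corresponding compact quotients of Lemma \ref{lem:equivariant}. There the image is subanalytic by definition, and $A$ is recovered fibrewise by the $(\R^*)^2$-action, hence is subanalytic in $J^r(M,\xi)$. The set $B$ is already semianalytic (and thus subanalytic) by Proposition \ref{prop:submanifoldJets}. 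Finite unions of subanalytic sets are subanalytic, and so is the closure of a subanalytic set (the analogue of Proposition \ref{prop:closureFrontier} in the subanalytic category), so $J^r(M,\xi)_\sing = \overline{A \cup B}$ is closed and subanalytic.

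For the dimension bound, I would control $A$ and $B$ separately and then use that closure does not raise the dimension of a subanalytic set. For $B$, Proposition \ref{prop:submanifoldJets} gives $\dim B \leq (\rank(\xi)-1)(r-1) + \dim(M)$, which is well below the target bound. For $A$, I would invoke the fibre-cutting Proposition \ref{prop:subanalyticStrat1}: working in compact neighbourhoods (either by quotienting by $\rho\oplus\eta$ as above, or by exhausting by compacts), the projection of $J^r(Z_1,d\lambda)_\SS$ decomposes as a finite union of images of immersed semianalytic pieces contained in $J^r(Z_1,d\lambda)_\SS$. Since immersions preserve dimension, Lemma \ref{lem:tangencyJetDimension} gives
\[
\dim A \;\leq\; \dim J^r(Z_1,d\lambda)_\SS \;\leq\; (\rank(\xi)-1)(r-1) + 2\dim(M).
\]
Combining the two estimates yields the claimed bound for $A \cup B$, and the bound is preserved on passing to the closure.

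The only step with any real content is the dimension bound on the projected piece $A$, because $j^r\pi$ need not be proper and the source is non-compact; this is where the $(\rho\oplus\eta)$-equivariant reduction to compact quotients genuinely matters, as it both converts the projection into one with subanalytic image and enables the fibre-cutting Proposition to apply uniformly. The remaining verifications, namely closedness (automatic from taking closure), semianalyticity of the constituent pieces (Propositions \ref{prop:tangencyJets} and \ref{prop:submanifoldJets}), and the comparison of the two dimension bounds, are bookkeeping.
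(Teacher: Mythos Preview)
Your proposal is correct and follows essentially the same route as the paper: decompose $J^r(M,\xi)_\sing$ into the projected tangency-type piece and the submanifold-type piece, use the $(\rho\oplus\eta)$-quotient trick from Corollary~\ref{cor:projectCharacteristicJets} to obtain subanalyticity of the projection, and pull the dimension bounds from Lemma~\ref{lem:tangencyJetDimension} and Proposition~\ref{prop:submanifoldJets}. Your treatment is in fact slightly more explicit than the paper's in two places: you invoke the subanalytic closure fact directly (the paper instead cites the semianalytic Proposition~\ref{prop:closureFrontier} and reruns the projection argument on the closure upstairs), and you spell out the fibre-cutting Proposition~\ref{prop:subanalyticStrat1} to justify that projection does not raise dimension, which the paper leaves implicit.
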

\begin{proof}
According to Proposition \ref{prop:tangencyJets}, the space of tangency type jets $J^r(Z_1,d\lambda)_\SS$ is semianalytic. One may then argue as in Corollary \ref{cor:projectCharacteristicJets} and show that its projection to $J^r(M,\xi)$ is subanalytic (the one subtlety again is taking $(\rho\oplus\eta)$-invariance into account).

Similarly, Proposition \ref{prop:submanifoldJets} shows that the jets of submanifold type $J^r(M,\xi)_{\SS_M}$ form a semianalytic (and, in particular, subanalytic) set.

Additionally, according to Proposition \ref{prop:closureFrontier}, closures of semianalytic sets are also semianalytic, so our previous reasoning applies again. This shows that $J^r(M,\xi)_\sing$ is subanalytic (and closed due to being a closure).

The dimension bounds from Proposition \ref{prop:tangencyJets} and Proposition \ref{prop:submanifoldJets} prove the second claim. We note that taking closures does not increase the dimension.
\end{proof}

The reader may wonder why we are interested in inadmissible jets. Indeed, we could have defined a singular jet to be the image of a characteristic jet. Those are the jets that actually solve the singularity condition up to order $r$, and are thus much more natural to be studied. Here is the fundamental property of inadmissible jets, which allows us to disregard general singular jets:
\begin{proposition} \label{prop:realisableJets}
Let $\gamma: \Op(0) \to (M,\xi)$ be a singular curve. Then, $j^r\gamma \in J^r(M,\xi)_\sing$.
\end{proposition}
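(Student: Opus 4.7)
The plan is to apply Hsu's theorem (Proposition \ref{prop:Hsu}) to lift $\gamma$ to a characteristic curve $\widetilde\gamma : \Op(0) \to Z_1 \setminus M$, and then exhibit $j^r\gamma$ as a limit of projections of tangency-type jets (or, failing that, of submanifold-type jets). Since $J^r(M,\xi)_\sing$ is by definition the closure of $\pi(J^r(Z_1,d\lambda)_\SS) \cup J^r(M,\xi)_{\SS_M}$, it is enough to produce, for every neighborhood of $j^r\gamma$ in $J^r(M,\xi)$, a jet in one of these two sets landing in it. The strategy will be to translate $\widetilde\gamma$ by small amounts $t_n \to 0$ and show that, for a cleverly chosen stratum, the translated curves have jets at $0$ which automatically sit inside $J^r(Z_1,d\lambda)_\SS$ or $J^r(M,\xi)_{\SS_M}$.

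The key combinatorial object will be the finite collection
\[ \SS_0 := \{ S \in \SS \,\mid\, 0 \in \overline{\widetilde\gamma^{-1}(S)} \}, \]
partitioned as $\SS_0 = (\SS_0 \cap \SS^o) \sqcup (\SS_0 \cap \SS^c)$.

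First I will treat the generic situation where $\SS_0 \cap \SS^o \neq \emptyset$. I will pick $S \in \SS_0 \cap \SS^o$ of maximal dimension and a sequence $t_n \to 0$ with $\widetilde\gamma(t_n) \in S$. Since $S$ lies over an open base stratum and closures of lower base strata cannot swallow open strata, every $S'' \in \SS$ with $S < S''$ also lies in $\SS^o$; by maximality of $\dim S$, such $S''$ cannot be in $\SS_0$, hence $\widetilde\gamma^{-1}(S'')$ stays uniformly bounded away from $0$ for the finitely many relevant $S''$. On the other hand, for strata $S' < S$ in the frontier of $S$, the stratification axiom forces $S \cap \overline{S'} = \emptyset$, so a whole $Z_1$-neighborhood of $\widetilde\gamma(t_n) \in S$ avoids $S'$. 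Combining these two facts, for large $n$ there is a $t$-neighborhood of $t_n$ on which $\widetilde\gamma$ is entirely contained in $S$. By Lemma \ref{lemma:tangencyJets} this yields $j^r_{t_n}\widetilde\gamma \in J^r(S) \cap J^r(Z_1,d\lambda) = J^r(Z_1,d\lambda)_\SS \cap S$; projecting and translating by $t_n$ produces jets in $\pi(J^r(Z_1,d\lambda)_\SS)$ that converge in $J^r(M,\xi)$ to $j^r\gamma$.

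The remaining case $\SS_0 \subset \SS^c$ means $\gamma$ itself stays inside $\bigcup \SS_M^c$ near $0$, and we run the same maximality argument one level down: pick $S_0 \in \SS_M^c$ of maximal dimension with $0 \in \overline{\gamma^{-1}(S_0)}$, and apply the same frontier dichotomy to $\gamma$ and $\SS_M$ to obtain times $t_n \to 0$ near which $\gamma$ lies in $S_0$. These give $j^r_{t_n}\gamma \in J^r(S_0) \cap J^r(M,\xi) \subset J^r(M,\xi)_{\SS_M}$, converging to $j^r\gamma$ after translation. The main obstacle is exactly the one this maximality trick circumvents: a characteristic jet need not be tangent to the stratum at its basepoint, so one cannot simply use $j^r_0\widetilde\gamma$ itself. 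Instead, one must approximate along a carefully selected stratum where the frontier relations force a posteriori tangency.
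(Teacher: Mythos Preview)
Your argument is correct and follows essentially the same strategy as the paper: approximate $j^r\gamma$ by jets $j^r_{t_n}\gamma$ (or $j^r_{t_n}\widetilde\gamma$) taken at nearby times where the curve sits locally inside a single stratum, so that those jets are of tangency or submanifold type. The paper organises this slightly differently---it first finds the intervals $\Op(t_i)$ at the level of $\SS_M$, and only afterwards lifts via Hsu and passes to a subsequence adapted to $\SS$---whereas you lift first and run the dichotomy on $\SS^o$ versus $\SS^c$. These are equivalent. Your maximal-dimension argument is in fact a more explicit justification of what the paper compresses into the phrase ``by local finiteness of the stratification''; the paper simply asserts the existence of such intervals. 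One small expository point: your treatment of strata $S' < S$ is correct but not really needed, since any stratum whose closure meets $\widetilde\gamma(t_n) \in S$ is automatically $\geq S$ by the frontier condition, so incomparable strata and strata below $S$ are excluded for free near $\widetilde\gamma(t_n)$.
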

\begin{proof}
By local finiteness of the stratification $\SS_M$, there exists a sequence of points $t_i \to_{i\to \infty} 0$, and intervals $\Op(t_i) \supset t_i$, such that the images $\gamma(\Op(t_i))$ are all contained in the same stratum $S \in \SS_M$. Do note that $S$ may not be the stratum containing $\gamma(0)$ (but $\gamma(0) \in \overline{S}$).

Now there are two options: If $S \in \SS_M^c$, the jets $j^r_{t_i}\gamma$ are all of submanifold type. Then $j^r\gamma$ is in the closure of $J^r(M,\xi)_{\SS_M}$. As such, it is inadmissible.

Otherwise, if $S \in \SS_M^0$, we use Hsu's result Proposition \ref{prop:Hsu} to lift $\gamma$ to a characteristic curve $\widetilde\gamma$. As such $j^r\widetilde\gamma \in J^r(Z_1,d\lambda)$. We just need to show that it in fact belongs to $\overline{J^r(Z_1,d\lambda)_\SS}$.

We argue as before: By local finiteness of $\SS$ there is a subsequence $t_i' \to_{i\to \infty} 0$, with neighbourhoods $\Op(t_i') \supset t_i'$, such that the images $\widetilde\gamma(\Op(t_i'))$ are all contained in a stratum $S'$ lying over $S$. The jets $j^r_{t_i'}\widetilde\gamma$ are all characteristic and tangent to $S'$, so they belong to $J^r(Z_1,d\lambda)_\SS$. Their limit $j^r\widetilde\gamma$ then belongs to the closure.
\end{proof}
The point is that a singular jet may be spurious: It may satisfy the singularity condition up to order $r$, but not arise as the jet of an actual singular curve. Such jets are not taken into account in our arguments (but it would be interesting to find an argument to detect/control them as well; we leave this as an open question).

In the same direction: Many of our inadmissible jets are not necessarily singular (for instance, those tangent to lower strata of $\SS_M$), but we discard them for convenience because they form small families nonetheless. It is unclear whether it may be useful to be a bit more careful in our argument and try to refine the class of inadmissible jets further.

\subsection{Microregular jets and the proof of Theorem \ref{thm:main2}} \label{ssec:microregularJets}

We have essentially completed the proof of Theorem \ref{thm:main2}, but let us spell it out.
\begin{definition} \label{def:microregularJets}
A horizontal $r$-jet $\sigma \in J^r(M,\xi)$ is \textbf{microregular} if it is not inadmissible.

The space of all microregular jets is denoted by $J^r(M,\xi)_\microreg$.
\end{definition}

We will sometimes speak of the microregular differential relation $\SR_\microreg$. Do note that it is indeed differential (since it depends on finite jets), but of arbitrarily high order.

\begin{proof}[End of the proof of Theorem \ref{thm:main2}]
According to Proposition \ref{prop:singularJets}, the set $J^r(M,\xi)_\microreg$ is open and dense. Furthermore, the codimension of its complement $J^r(M,\xi)_\sing \subset J^r(M,\xi)$ is bounded below by:
\begin{align*}
\dim(J^r(M,\xi)) - \dim(J^r(M,\xi)_\sing) & \geq [\rank(\xi)(r-1) + \dim(M)] - [(\rank(\xi)-1)(r - 1) + 2\dim(M)] \\
                                          & \geq r - 2\dim(M)- 1,
\end{align*}
which is linear in $r$. This proves the infinite codimension of inadmissible jets as $r$ goes to infinity.

By taking representative germs and invoking Proposition \ref{prop:realisableJets}, it follows that the space of singular germs has infinite codimension within the space of all horizontal germs.

To show that a family of horizontal vectors $(v_k)_{k \in K}$ can be extended to a family of microregular jets, we need an additional observation. Write $J^r(M,\xi,v_k)$ for the horizontal $r$-jets with a given $v_k$ as its first jet. This is an analytic subspace of $J^r(M,\xi)$ of codimension $l$ (which is independent of $r$). As such, the intersection 
\[ J^r(M,\xi)_\sing \cap J^r(M,\xi,v_k) \subset J^r(M,\xi,v_k) \]
is subanalytic and of codimension $O(r)$. By taking $r$ sufficiently large, it has codimension larger than $\dim(K)$, proving that we can find a family of preimages $(\nu_k)_{k \in K}$ in $J^r(M,\xi)_\microreg$. Observe now that the same reasoning applies when our starting point is a family of integral $a$-jets, for any fixed integer $a$.

By taking $r$ to infinity and using the previous reasoning, we deduce that 
\[ J^\infty(M,\xi)_\microreg \to J^1(I,M,\xi) \]
is a Serre fibration. Even further, reasoning as above with $K = \D^N$, for varying $N$ and relative to the boundary, shows weak contractibility of its fibres.
\end{proof}

\begin{proof}[Proof of local integrability in Theorem \ref{thm:main}]
According to Theorem \ref{thm:main2}, any family of vectors $(v_k)_{k \in K}$ in $J^1(M,\xi)$ can be extended, for $r$ sufficiently large, to a family of $r$-jets $(\nu_k)_{k \in K}$ in $J^r(M,\xi)_\microreg$.

In turn, we can choose a $K$-family of germs of horizontal curves $(\gamma_k)_{k \in K}$ representing the jets $(\nu_k)_{k \in K}$. This can be done by working in a Ehresmann chart, taking representative germs in the projection, and lifting them up (Lemma \ref{lem:EhresmannLiftJets}). Being microregular is an open condition for jets (according to Theorem \ref{thm:main2}), so all the jets of $\gamma_k$ (in a sufficiently small interval) are microregular.

The same reasoning applies in the relative case.
\end{proof}

\begin{proof}[Proof of Corollary \ref{cor:immersions}]
First we note that $\SR$ being open means that it defines a subset $R \subset J^a(M)$, for some $a$. Its prolongations (i.e. differential consequences in $J^r(M)$), due to openness, are the preimages of $R$ under the usual forgetful map $J^r(M) \to J^l(M)$.

According to Theorem \ref{thm:main2}, any family of integral $a$-jets $(v_k)_{k \in K}$ in $J^a(M,\xi)$ can be extended, for $r$ sufficiently large, to a family of $r$-jets $(\nu_k)_{k \in K}$ in $J^r(M,\xi)_\microreg$; this holds also relatively. The claim follows by considering families taking values in $R$.
\end{proof}

\section{An instance of Thom transversality} \label{sec:Thom}

In the next Section we will prove the microflexibility (Definition \ref{def:microflexibility}) of the microregular differential relation $\SR_\microreg$, completing the proof of Theorem \ref{thm:main}. The main idea is straightforward: Given any local deformation $(\widetilde\gamma_s)_{s \in [0,1]}$ of a microregular curve $\gamma$, we use regularity in the complement of the deformation region to extend it to a global deformation $(\gamma_s)_{s \in [0,\delta]}$. This is essentially automatic from the definition of regularity. The issue is that the resulting deformation is not necessarily made of microregular curves (although they are regular by construction).

To deal with this, we invoke the fact that the space of inadmissible jets has large codimension (Theorem \ref{thm:main2}) so regularity may be used to avoid it. This can be understood as a form of Thom transversality with respect to the inadmissible jets once we have regularity. In this Section we work out this transversality statement Theorem \ref{thm:Thom} in detail.

\subsection{A preparatory Lemma}

The regularity condition will allow us to deform families of curves while keeping their endpoint fixed. In order to do this, we first need to assign to each regular curve a finite family of variations that realise every possible move of the reduced endpoint.

\begin{lemma} \label{lem:variations}
Let $(W^m,\SD^l)$ bracket generating and not necessarily analytic. Fix a regular horizontal curve $\gamma: [0,1] \to (W,\SD)$. Let $\phi: U \subset \R^m \to W$ be an Ehresmann chart adapted to $\gamma$, with reduced endpoint $\Endpoint_\phi$, and projection $\pi: U \to \R^l$. We abuse notation and regard $\SD$ and $\gamma$ as objects in $U$.

Then, there is a family of curves $(\gamma_v: [0,1] \to (U,\SD))_{v \in \R^{m-l}}$ satisfying:
\begin{itemize}
\item $\gamma_0 = \gamma$.
\item $\gamma_v = \gamma$ in $\Op(\{0\})$.
\item $\pi \circ \gamma_v = \pi \circ \gamma$ in $\Op(\{0,1\})$.
\item The variations $\Gamma_v := \partial_v|_{v=0}(\gamma_v)$ satisfy that $d_{\gamma}\Endpoint_\phi|_{\langle \Gamma_v \rangle_{v \in \R^{m-l}}}$ is surjective.
\end{itemize}

In particular, $\Endpoint_\phi|_{(\gamma_v)_{v \in \Op(0)}}$ is a diffeomorphism with image a little vertical ball:
\[ \Op(\gamma(1)) \subset \{\pi \circ \gamma(1)\} \times \R^{m-l}. \]
\end{lemma}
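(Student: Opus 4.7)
I parametrize variations of $\gamma$ fixing the initial point via the Ehresmann lifting map (Lemma \ref{lem:EhresmannLift}): each $V \in C^\infty([0,1], \R^l)$ with $V(0) = 0$ yields a variational vector field $\Gamma_V := \partial_v|_{v=0} E(\pi \circ \gamma + v V)$ along $\gamma$. The three boundary constraints of the statement translate to conditions on $V$: $\gamma_v = \gamma$ near $0$ forces $V$ to vanish near $0$ (by ODE uniqueness, combined with the matching initial point), and $\pi \circ \gamma_v = \pi \circ \gamma$ near $\{0,1\}$ forces $V$ to vanish near both endpoints. So I look for $(m-l)$ functions $V^{(1)}, \ldots, V^{(m-l)}$ compactly supported in $(0,1)$ whose images under $d\Endpoint_\phi$ form a basis of $\R^{m-l}$, and then define $\gamma_v := E(\pi \circ \gamma + \sum_i v_i V^{(i)})$, which exists for $v$ sufficiently small.

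\textbf{Key step.} The main content is showing that $d\Endpoint_\phi$, restricted to variational vector fields coming from $V$ compactly supported in $(0,1)$, is still surjective onto $\R^{m-l}$. I proceed in two substeps. First, using that in Ehresmann coordinates $\Gamma_V(1) = (V(1), \delta y(1)) \in \R^l \oplus \R^{m-l}$, regularity of $\gamma$ (surjectivity of $d\Endpoint$ onto $T_{\gamma(1)} W$) produces, for every $w \in \R^{m-l}$, a $V$ with $V(0) = V(1) = 0$ such that $\delta y(1) = w$; so $d\Endpoint_\phi$ is already surjective once restricted to variations with $V$ vanishing at both endpoints. Second, a cutoff $V_\epsilon := V \chi_\epsilon$, where $\chi_\epsilon$ vanishes on small neighborhoods of $\{0,1\}$ and equals $1$ on $[\epsilon, 1-\epsilon]$, shows that compactly supported $V$'s are dense in this class in a topology making the assignment $V \mapsto d\Endpoint_\phi(\Gamma_V)$ continuous (for instance $W^{1,1}$, by ODE sensitivity). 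Since the image of this assignment is a subspace of the finite-dimensional space $\R^{m-l}$, density forces equality.

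\textbf{Conclusion.} With the basis in hand, the first three bullets hold by construction. For the fourth bullet, the directional derivative $\Gamma_v = \sum_i v_i \Gamma_{V^{(i)}}$ spans the image of $(V^{(1)}, \ldots, V^{(m-l)})$, and $d\Endpoint_\phi$ sends this to $\R^{m-l}$ surjectively by the choice of basis. For the ``in particular'' statement, the composition $v \mapsto \Endpoint_\phi(\gamma_v)$ has differential at $v = 0$ equal to the linear isomorphism $v \mapsto \sum_i v_i d\Endpoint_\phi(\Gamma_{V^{(i)}})$, so the inverse function theorem yields a local diffeomorphism from a neighborhood of $0 \in \R^{m-l}$ onto a neighborhood of $\Endpoint_\phi(\gamma)$. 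Combined with $\pi \circ \gamma_v(1) = \pi \circ \gamma(1)$, the endpoints $\gamma_v(1)$ lie in the vertical fiber $\{\pi(\gamma(1))\} \times \R^{m-l}$ and sweep out a small ball centered at $\gamma(1)$.

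\textbf{Main obstacle.} The delicate part is the continuity of $V \mapsto d\Endpoint_\phi(\Gamma_V)$ under the cutoff: the linearization of the Ehresmann ODE involves $V'$, so one must control the spurious contribution of $V \chi_\epsilon'$ as $\epsilon \to 0$. The point is that $V(0) = V(1) = 0$ implies $V = O(\epsilon)$ on the shrinking support of $\chi_\epsilon'$, while $\chi_\epsilon' = O(1/\epsilon)$ there, so the $L^1$-norm of $V \chi_\epsilon'$ is $O(\epsilon)$ and the perturbation of $\delta y(1)$ vanishes in the limit. This is a routine but essential ODE-sensitivity computation and constitutes the linchpin of the argument.
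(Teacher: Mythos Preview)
Your proof is correct and follows the same strategy as the paper: parametrize variations via the Ehresmann lifting, use regularity to find base variations $V^{(i)}$ hitting a basis of $\R^{m-l}$ under $d\Endpoint_\phi$, and then integrate and lift. The paper's proof is in fact terser than yours on the key point: it simply asserts that the variational vector fields $\Gamma_i$ produced by regularity ``are lifts of compactly-supported variational vector fields $\widetilde\Gamma_i$ of $\pi\circ\gamma$'', without justifying the passage from $V(0)=V(1)=0$ (which is automatic, since $d\Endpoint$ hits $0\times\R^{m-l}$) to compact support in $(0,1)$. Your cutoff-plus-density argument, using $W^{1,1}$ continuity of $V\mapsto d\Endpoint_\phi(\Gamma_V)$ and finite-dimensionality of the target, is exactly what is needed to close that step, and your estimate $\|V\chi_\epsilon'\|_{L^1}=O(\epsilon)$ is the right one. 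So your version is essentially the paper's proof with the one elided step made explicit.
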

\begin{proof}
Let $e_{l+1},\cdots,e_m$ be the standard basis of vector fields in $U \subset \R^m$ tangent to the last $(m-l)$-coordinates. By regularity of $\gamma$ there are variational vector fields $\Gamma_i$ that surject onto them using the differential $d_\gamma\Endpoint$ of the restricted endpoint map. We see the $\Gamma_i$ as lifts of compactly-supported variational vector fields $\widetilde\Gamma_i$ of $\pi \circ \gamma$.

We write $\widetilde\Gamma_v := \sum_{i=l+1}^m v_i \widetilde\Gamma_i$ for the (compactly-supported) variational vector field mapping to $v := \sum_{i=l+1}^m v_i e_i$. We arbitrarily extend the $\widetilde\Gamma_v$ to curves $\widetilde\gamma_v$ in $\R^l$ that agree with $\pi \circ \gamma$ close to their endpoints. Their lifts with initial point $\gamma(0)$ yield the claimed family $\gamma_v$.
\end{proof}

\begin{definition}
Under the assumptions and conclusions of the previous Lemma, we say that $(\gamma_v)_{v \in \R^{m-l}}$ is a \textbf{variational endpoint family} associated to $\gamma$ and that $\Endpoint_\phi|_{(\gamma_v)_{v \in \Op(0)}}$ is a \textbf{variational endpoint map}.
\end{definition}

\begin{corollary} \label{cor:variationsParametric}
Let $(U,\SD)$ be a bracket-generating Ehresmann chart. Fix a family of regular horizontal curves $(\gamma_k: [0,1] \to (U,\SD))_{k \in \D^b}$. Then, they admit a variational endpoint family $(\gamma_{k,v})_{k \in \D^b, v \in \R^{m-l}}$, parametrically in $k$.
\end{corollary}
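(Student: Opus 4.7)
The plan is to run the argument of Lemma~\ref{lem:variations} simultaneously for all $k\in \D^b$. The obstruction is that the particular variations $\widetilde\Gamma_i$ producing surjectivity of the reduced endpoint map depend on the base curve, and there is no reason to expect a single choice of $m-l$ such variations to work for every $k$. My strategy is to cover $\D^b$ by finitely many neighbourhoods, each carrying its own collection of variations, and to glue them by a Moore--Penrose pseudo-inverse.

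First, for each $k_0 \in \D^b$, I would apply Lemma~\ref{lem:variations} to the regular curve $\gamma_{k_0}$ to obtain compactly supported variations $\widetilde\Gamma^{k_0}_{l+1},\ldots,\widetilde\Gamma^{k_0}_m \in C^\infty([0,1],\R^l)$ whose Ehresmann lifts along $\gamma_{k_0}$ give variational fields whose images under $d\Endpoint_\phi$ form a basis of $\R^{m-l}$. By Lemma~\ref{lem:EhresmannChart} the Ehresmann lift depends smoothly on the base curve, so the matrix
\[ M^{k_0}(k) := \bigl[\, d\Endpoint_\phi|_{\gamma_k}\bigl(\text{Ehresmann lift along } \gamma_k \text{ of } \widetilde\Gamma^{k_0}_i\bigr)\, \bigr]_{i=l+1,\ldots,m} \]
depends smoothly on $k$. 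Invertibility is open, so it persists on a neighbourhood $V_{k_0}\ni k_0$. Compactness of $\D^b$ then yields a finite subcover $\{V_\alpha\}_{\alpha \in A}$ with associated variations $\{\widetilde\Gamma^\alpha_i\}$, which I relabel $\widetilde\Gamma_1,\ldots,\widetilde\Gamma_N$.

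Second, I would define the smooth family of linear maps $A_k: \R^N \to \R^{m-l}$ by
\[ A_k(c_1,\ldots,c_N) := d\Endpoint_\phi|_{\gamma_k}\Bigl(\text{Ehresmann lift along } \gamma_k \text{ of } \sum_{j} c_j \widetilde\Gamma_j\Bigr). \]
For each $k$ there is some $\alpha$ with $k \in V_\alpha$, so restricting $A_k$ to the coordinates from $\alpha$ gives an invertible square block; hence $A_k$ is surjective and $A_k A_k^T$ is invertible for every $k$. Set
\[ R_k := A_k^T (A_k A_k^T)^{-1} : \R^{m-l} \to \R^N, \]
a smooth family of right inverses with $A_k R_k = \id$. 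Finally, define
\[ \widetilde{\gamma}_{k,v}(t) := \pi\circ\gamma_k(t) \;+\; \sum_{j=1}^N (R_k v)_j \,\widetilde\Gamma_j(t), \]
and let $\gamma_{k,v}$ be its Ehresmann lift with initial point $\gamma_k(0)$. By compactness of $\D^b$ and boundedness of $R_k$, this lift exists inside $U$ uniformly for $v$ in a small neighbourhood of $0 \in \R^{m-l}$. The four required properties then follow at once: $\gamma_{k,0}=\gamma_k$ by construction, $\gamma_{k,v}=\gamma_k$ in $\Op(\{0\})$ because the $\widetilde\Gamma_j$ vanish near $0$ and the lift starts at $\gamma_k(0)$, $\pi\circ\gamma_{k,v}=\pi\circ\gamma_k$ in $\Op(\{1\})$ because the $\widetilde\Gamma_j$ vanish there, and $\partial_v|_{v=0} d\Endpoint_\phi(\gamma_{k,v}) = A_k R_k = \id$ is surjective.

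The main difficulty is conceptual rather than technical: one is tempted to use a partition of unity to combine the invertible $M^{k_0}$ into a single invertible map, but convex combinations of invertible matrices need not be invertible. The Moore--Penrose construction bypasses this by working with the enlarged surjective map $A_k$ and selecting a smooth section canonically. The rest is routine verification using the smoothness of the Ehresmann lift.
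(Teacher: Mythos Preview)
Your argument is correct and the structure is sound: localise by compactness, verify smooth dependence of the reduced endpoint differential on $k$, and then globalise by selecting a smooth right inverse of the assembled map $A_k$.

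The paper takes a slightly different route at the gluing step, and in fact it does use a partition of unity. Your worry that convex combinations of invertible matrices need not be invertible is legitimate, but the paper sidesteps it by first \emph{normalising}: on each open $\Op(k_i)$ it performs a $k'$-dependent change of basis in the $v$-variable so that $d\Endpoint_\phi(\Gamma^{k_i}_{k',v}) = v$ identically. Once every local family hits $v$ on the nose, the partition-of-unity average $\Gamma_{k,v} := \sum_i \chi_i(k)\,\Gamma^{k_i}_{k,v}$ satisfies $d\Endpoint_\phi(\Gamma_{k,v}) = \sum_i \chi_i(k)\,v = v$ by linearity of $d\Endpoint_\phi$; one is averaging copies of the identity, not arbitrary invertible matrices. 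Your Moore--Penrose construction achieves the same end by a different mechanism: rather than normalising each block and averaging, you enlarge to a surjective $A_k:\R^N\to\R^{m-l}$ and pick the canonical least-squares section. Both approaches are short; the paper's is perhaps more elementary, while yours avoids the per-chart change of basis and gives a section that is canonical once the finite list of variations is fixed.
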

\begin{proof}
To each curve $\gamma_k$ we assign a variational endpoint family $(\gamma_v^k)_{v \in \Op(0)}$ using the Lemma (these do not vary smoothly in $k)$. The corresponding infinitesimal variations $(\Gamma_v^k)_{v \in \R^{m-l}}$ can be extended arbitrarily to the nearby curves; we denote these by $(\Gamma_{k',v}^k)_{k' \in \Op(k), v \in \R^{m-l}}$. Since $d\Endpoint_\phi|_{(\Gamma_v^k)}$ is an isomorphism, the same must be true for $d\Endpoint_\phi|_{(\Gamma_{k',v}^k)}$ by continuity for those $k'$ sufficiently close to $k$. We can thus assume, by a change of basis parametric in $k'$, that $d\Endpoint_\phi(\Gamma_{k',v}^k) = v$.

Now we cover $\D^b$ by finitely many of these opens $\Op(k_i)$. Using a partition of unity $\chi_i$ we set 
\[ \Gamma_{k,v} := \sum_i \chi_i \Gamma_{k,v}^{k_i}. \]
Since $d\Endpoint_\phi$ is linear, it follows that $\Gamma_{k,v}$ maps to $v$. We extend these variational vector fields arbitrarily to horizontal curves (with compactly-supported projection and given initial point).
\end{proof}

\subsection{The proof}

Let us restate Theorem \ref{thm:Thom} in its relative and parametric versions:
\begin{proposition}\label{prop:Thom}
Let $(M,\xi)$ be bracket-generating and real analytic. Let $K' \subset K$ be compact manifolds (possibly with boundary).

Let $(\gamma_k: [0,1] \to (M,\xi))_{k \in K}$ be a family of horizontal curves parametrised by $K$. We assume that:
\begin{itemize}
\item $\gamma_k$ is microregular for all $k \in \Op(K')$.
\item $\gamma_k$ is microregular at every $t \in \Op(\partial([0,1]))$, for all $k$.
\item $\gamma_k$ is regular for all $k$.
\end{itemize}
Then, given any integer $a$, the family can be $C^a$-perturbed, relative to the boundary in the parameter and the domain, to yield a family of microregular horizontal curves.
\end{proposition}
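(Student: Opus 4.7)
The plan is a Thom transversality argument against the closed subanalytic subset $J^r(M,\xi)_\sing$ of Theorem \ref{thm:main2}, implemented in the space of horizontal perturbations via the Ehresmann formalism of Section \ref{sec:Ehresmann}, with endpoint drifts cancelled using the regularity hypothesis. First, fix $r$ large enough that the codimension of $J^r(M,\xi)_\sing$, which grows linearly in $r$ by Proposition \ref{prop:singularJets}, strictly exceeds $\dim(K)+1$. Cover a compact neighbourhood of $K \setminus \Op(K')$ by finitely many small balls $W_j$, each equipped with an Ehresmann chart $\phi_j: U_j \to M$ adapted to $\gamma_{k_j}$ for a reference $k_j \in W_j$ (Proposition \ref{prop:EhresmannChartCurve}). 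After shrinking, every $\gamma_k$ with $k \in W_j$ lies in $U_j$ and is graphical over $\R^l$, so by Lemma \ref{lem:EhresmannChart} it is determined by its projection $\pi \circ \gamma_k$ together with its initial point; perturbations of horizontal curves with fixed initial point thus correspond to perturbations of the projection.

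Next I would set up the perturbation space. Pick small intervals $[0,\epsilon]$ and $[1-\epsilon,1]$ inside $\Op(\partial [0,1])$ where $\gamma_k$ is microregular, together with an intermediate buffer $[1-2\epsilon,1-\epsilon]$. Being microregular implies regular on every subinterval, so $\gamma_k|_{[1-2\epsilon,1-\epsilon]}$ is regular and Corollary \ref{cor:variationsParametric} supplies a parametric variational endpoint family supported in this buffer, whose reduced endpoint map is a local diffeomorphism onto a ball in $\R^{m-l}$. For $\beta \in C^\infty_c([\epsilon,1-2\epsilon],\R^l)$ small, the Ehresmann lift $\gamma_{k,\beta} := E(\pi\circ\gamma_k + \beta, \gamma_k(0))$ agrees with $\gamma_k$ on $[0,\epsilon]$ but produces a vertical drift at $t = 1-2\epsilon$; the variational endpoint family absorbs this drift inside the buffer, yielding a corrected family $\widehat\gamma_{k,\beta}$ of horizontal curves, smooth in $(k,\beta)$, that equals $\gamma_k$ on $[0,\epsilon]\cup[1-\epsilon,1]$ and on $\Op(\partial W_j)$.

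With $\widehat\gamma_{k,\beta}$ in hand, pick a finite-dimensional family $\mathcal{P}$ of such perturbations (for instance bump-localised polynomials of degree $\leq r$ on a mesh in $[\epsilon,1-2\epsilon]$) rich enough that the $\beta$-derivative of the smooth map
\[
F : W_j \times [0,1] \times \mathcal{P} \;\longrightarrow\; J^r(M,\xi), \qquad (k,t,\beta) \mapsto j^r_t\widehat\gamma_{k,\beta},
\]
at $\beta = 0$ covers $TJ^r(M,\xi)$ modulo the $(m-l)$-dimensional endpoint constraint. Parametric transversality applied to each smooth immersed piece of $J^r(M,\xi)_\sing$ (Proposition \ref{prop:subanalyticStrat1}), combined with the codimension bound $> \dim(K)+1$, then yields a dense set of $\beta \in \mathcal{P}$ for which the corresponding map $(k,t)\mapsto j^r_t\widehat\gamma_{k,\beta}$ avoids $J^r(M,\xi)_\sing$ entirely. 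Openness of microregularity ensures an arbitrarily $C^a$-small such $\beta$ suffices. Patching the local $\beta_j$ over the cover via a partition of unity subordinate to $(W_j)\cup \Op(K')$ completes the construction.

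The main obstacle is verifying the submersivity needed in the previous step: the restricted perturbation space of horizontal, endpoint-preserving, relative-in-domain variations must still generate enough directions in $TJ^r(M,\xi)$ to achieve transversality to each stratum of $J^r(M,\xi)_\sing$. This relies essentially on the analyticity of the Ehresmann lift (Proposition \ref{prop:EhresmannAnalytic}), which makes $F$ analytic in $\beta$ with explicit coefficients built from iterated Lie derivatives, on regularity both globally and on the buffer subinterval (to decouple the interior perturbation from the endpoint correction), and on the $O(r)$ codimension growth from Theorem \ref{thm:main2}, which is what allows a single large enough $r$ to absorb arbitrary parameter dimensions.
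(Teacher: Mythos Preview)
Your overall architecture matches the paper's: perturb the Ehresmann projections, lift, and use the regularity hypothesis to repair endpoints via a variational family. Two organisational differences are worth noting. First, the paper sidesteps the submersivity check you flag as the ``main obstacle'' by projecting $J^r(U,\xi)_\sing$ down to $J^r(\R^l)$ (adding $\dim(M)$ to the required codimension to absorb the fibre dimension) and then applying \emph{standard} Thom transversality to the unconstrained projected curves $\pi\circ\gamma_k$ --- in $J^r(\R^l)$ the perturbation space is the full space of curves and submersivity is tautological. Second, the paper reverses your order of operations: it first enlarges the family by the endpoint parameter $v$ (Corollary \ref{cor:variationsParametric}), perturbs the whole enlarged family $(\gamma_{k,v})$ to microregularity, and only then selects $v(k)$ to fix the endpoint. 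This way the correction step never threatens microregularity, whereas in your scheme the buffer correction is entangled with the map $F$ you must analyse.

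There is, however, a genuine gap in your patching step. Combining the local perturbations $\beta_j$ by a partition of unity does not work: the $\beta_j$ live in \emph{different} Ehresmann charts (so a convex combination is not even well-defined without further work), and even in a common chart a convex combination of two $\beta$'s avoiding $J^r(\R^l)_\sing$ has no reason to avoid it. The paper handles this by triangulating $K$ and working inductively over simplices by increasing dimension, each step relative to the region where microregularity has already been achieved (the standard cell-by-cell scheme, cf.\ \cite[Proposition 30]{CPPP}). This is what lets the perturbations be glued without spoiling transversality; you should replace the partition-of-unity step by this inductive argument.
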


Such a statement was explained already in \cite[Proof of Theorem 1]{PP} and \cite{CP} in the setting of Engel manifolds. The main point (both here and there) is that, unlike most transversality statements, one cannot argue purely locally. Indeed, we need to invoke regularity (which is a global property) in order to produce, during the deformation process, curves with the correct boundary conditions.

\begin{proof}[Proof of Theorem \ref{thm:Thom} and Proposition \ref{prop:Thom}]
We fix a triangulation $\ST$ of $K$. If $\ST$ is sufficiently thin, given any simplex $\sigma \in \ST$, the curves $(\gamma_k)_{k \in \sigma}$ will all be contained in an Ehresmann chart adapted to them.

We have to deform the family $(\gamma_k)_{k \in K}$ to achieve microregularity. This is done one (sufficiently small) neighbourhood $\SU(\sigma)$ of a cell $\sigma \in \ST$ at a time; at every step we invoke Theorem \ref{thm:main2} to perturb the curves in order to avoid the inadmissible jets. We order the $\SU(\sigma)$ arbitrarily but increasingly in dimension. If $K'$ is not empty, we require that $\ST$ extends a triangulation of $K'$; the cells corresponding to $K'$ are ignored in our argument. Standard arguments (see for instance \cite[Proposition 30]{CPPP}) show that:
\begin{itemize}
\item $\SU(\sigma)$ can be identified with $\D^{\dim(K)}$.
\item The neighbourhood of the previous cells is, in this model, of the form $\Op(\NS^{\dim(\sigma)-1})$ (and, in particular, empty if $\sigma$ is a vertex).
\end{itemize}

Now we work in a concrete $\SU := \SU(\sigma) \subset K$. The curves $(\gamma_k)_{k \in \SU}$ are all adapted to an Ehresmann chart $\phi: U \to M$ with projection $\pi: U \to \R^l$. This may be assumed applying Proposition \ref{prop:EhresmannChartCurve} to each $\gamma_k$ and using the fact that $\ST$ can be chosen to be arbitrarily thin (so all curves  $(\gamma_k)_{k \in \SU}$ are contained in the Ehresmann chart associated to one of them). We pass back and forth between $U$ and $M$ to avoid cluttering the notation.

We fix $r = O(n)$ sufficiently large so that $J^r(M,\xi)_\sing$ has codimension larger than $\dim(K)+1+\dim(M)$. We project down $J^r(U,\xi)_\sing$ to $J^r(\R^l)$; the image $J^r(\R^l)_\sing$ is, by construction, closed, subanalytic, and of codimension larger than $\dim(K)+1$. We then apply standard Thom transversality to $(\pi \circ \gamma_k)_{k \in \SU}$ to yield a $C^a$-deformed family $(\widetilde{\pi \circ \gamma}_k)_{k \in \SU}$ whose $r$-jets avoid $J^r(\R^l)_\sing$. In the region corresponding to previous cells we do not have to deform because we had already achieved microregularity there. We also do not need to deform close to the endpoints $\{0,1\}$.

We lift $(\widetilde{\pi \circ \gamma}_k)_{k \in \SU}$ to a family $(\widetilde{\gamma}_k)_{k \in \SU}$ using the Ehresmann lifting map of $U$. The initial points are chosen to be the initial points of $(\gamma_k)_{k \in \SU}$. However, the lifting process does not respect the endpoint: The endpoints of the family family $(\widetilde{\gamma}_k)_{k \in \SU}$ have been displaced in a $C^a$-small manner and therefore they do not satisfy the desired boundary conditions.

To address this we use the results from the previous Subsection. Before we start the induction, we instead apply Corollary \ref{cor:variationsParametric} to $(\gamma_k)_{k \in \SU}$, enlarging it to a family $(\gamma_{k,v})_{k \in \SU, v \in \Op(0)}$. Do note that the $\gamma_{k,0} = \gamma_k$ have the desired endpoint but the others do not. Indeed, the curves $(\gamma_{k,v})_{v \in \Op(0)}$ have the same initial point as $\gamma_k$, but the endpoint varies in a little (vertical) ball.

We may then argue as above but with the family $(\gamma_{k,v})_{k \in \SU; v \in \Op(0)}$ instead; this yields a deformed family $(\widetilde\gamma_{k,v})_{k \in \SU, v \in \Op(0)}$. Since the variational endpoint map of each $\gamma_k$ provided a diffeomorphism between $\Op(v)$ and a little vertical ball, the same is true for the deformed family (here we use the $C^a$-smallness of the perturbation). In particular, for each $k$, there is exactly one curve $\widetilde\gamma_{k,v(k)}$ in $(\widetilde\gamma_{k,v})_{v \in \Op(0)}$ with the desired endpoint. Do note that $\widetilde\gamma_{k,v(k)}$ is smooth, because its projection $\pi \circ \widetilde\gamma_{k,v(k)}$ is smooth.

This concludes the inductive step, producing a family of microregular, horizontal curves $(\widetilde\gamma_{k,v(k)})_{k \in K}$. Now we prove that this family is homotopic to $(\gamma_k)_{k \in K}$ through a regular horizontal family. Indeed, the two are $C^a$-close. In particular, by working in the Ehresmann chart $U$ and lifting, we can interpolate between them using a $C^a$-small family of horizontal and regular curves (but potentially with the wrong endpoint). We apply the closing argument of the previous paragraph to this interpolation. This concludes the proof.
\end{proof}

\begin{remark}
Let us comment on a slightly subtle point in the proof. The set $J^r(\R^l)_\sing$ is a finite collection of immersed submanifolds in $J^r(\R^l)$ (Proposition \ref{prop:subanalyticStrat1}), but not necessarily a stratification satisfying Whitney's conditions (which is usually the assumption in order to invoke Thom transversality with respect to it). However, Whitney's conditions are not needed in our setting, in which the singularity set $J^r(\R^l)_\sing$ is to be avoided altogether due to the high codimension assumption.

Indeed, we cover the submanifolds of $J^r(\R^l)_\sing$ by open balls in $J^r(\R^l)$. The balls covering the frontier of a submanifold cover also a neighbourhood of it, so the collection of balls may be chosen to be finite. Then we argue inductively, starting from the smaller submanifolds. At each step we apply Thom transversality to avoid them.

A similar observation can be found in \cite[Remark in p.33]{Gr86}.
\end{remark}

\section{Microflexibility} \label{sec:microflexibility}

In this last Section we prove the microflexibility of microregular curves. The argument is very similar to the proof of Proposition \ref{prop:Thom}.

\begin{proof}[Proof of microflexibility in Theorem \ref{thm:main}]
Let us recall the setup: We have a family of microregular curves $(\gamma_k: [0,1] \to (M,\xi))_{k \in K}$ and a family of local deformations $(\widetilde\gamma_{k,s}: \Op(I) \to (M,\xi))_{k \in K, s \in [0,1]}$ defined in a neighbourhood of some closed subset $I \subset [0,1]$. We want to extend this deformation to a global one, defined in an arbitrarily small time interval $s \in [0,\delta]$.

First, note that it may be assumed that $K = \D^b$ and that the family $(\gamma_k)_{k \in \D^b}$ maps into an Ehresmann chart $\phi(U)$. This is done as in Proposition \ref{prop:Thom}, by triangulating $K$ and working relatively to previous simplices. Similarly, due to compactness, we can find a finite collection of disjoint intervals $I_i \subset [0,1] \setminus I$ such that, together with $\Op(I)$, they cover $[0,1]$. By working with a particular $I_i$, we reduce the problem to the case in which $I=\{0,1\}$. We proceed under these simplifying assumptions.

Due to microregularity, the curves $(\gamma_k)_{k \in \D^b}$ are regular in both $[0,1/2]$ and $[1/2,1]$. We can apply Corollary \ref{cor:variationsParametric} on each half, keeping the middle fixed, to yield a variational endpoint family for both ends:
\[ (\gamma_{k,v_1,v_2})_{k \in \D^b; v_1,v_2 \in \R^{m-l}}, \] 
i.e. $v_1$ controls the vertical displacement over $\gamma(0)$ and $v_2$ the displacement over $\gamma(1)$. We may assume that, under the local projection $\pi: U \to \R^l$ provided by the Ehresmann chart, the variations are trivial in $\Op(\{0,1/2,1\})$. In particular, for $k$ fixed, all the projected curves agree with $\pi \circ \gamma_k$ at the endpoints and middle point. 

We now modify the variational family close to its endpoints. Namely, we define a new family
\[ (\widetilde\gamma_{k,v_1,v_2,s})_{k \in \D^b; v_1,v_2 \in \R^{m-l}; s \in [0,1]} \]
whose projection $\pi \circ \widetilde\gamma_{k,v_1,v_2,s}$ agrees with $\pi \circ \gamma_{k,v_1,v_2}$ away from the endpoints, but close to them is given by the deformation $\pi \circ \widetilde\gamma_{k,s}$. We choose the middle point as the lifting point so $\widetilde\gamma_{k,v_1,v_2,s}(1/2) = \gamma_k(1/2)$.

Now we are almost done. For $s=0$, the restricted endpoint maps are diffeomorphisms. The same is true from small $s \leq \delta$ by continuity. Therefore, there are unique values $v_1(k,s)$, $v_2(k,s)$ such that the family
\[ (\widetilde\gamma_{k,v_1(k,s),v_2(k,s),s})_{k \in \D^b, s \in [0,\delta]} \]
is a global deformation extending $(\widetilde\gamma_{k,s})_{k \in K, s \in [0,1]}$ for small time. The issue is that this family is regular (because we have explicit variational families showing that this is the case), but not microregular. Then we must apply transversality Theorem \ref{thm:Thom}, relative to the already microregular region, to perturb it slightly and obtain a family satisfying the claimed properties.
\end{proof}

\end{document}